\documentclass[preprint,11pt]{elsarticle}
\usepackage[T1]{fontenc}
\usepackage{geometry}
\usepackage{mwe}
\usepackage{xcolor}
\usepackage{longtable}
\usepackage{ltablex}
\usepackage{caption, booktabs}
\usepackage{adjustbox}
\usepackage{soul}
\usepackage{mathrsfs}
\usepackage{array}
\usepackage{color,soul}
\usepackage{colortbl}
\usepackage[normalem]{ulem}
\usepackage{multicol}
\usepackage[colorlinks]{hyperref}
\usepackage{doi}
\usepackage{marginnote}
\usepackage{tikz}
\usepackage{array}
\reversemarginpar 
\usepackage{amsmath,amssymb,amsthm,marvosym,tikz,mathdots,caption,yhmath,MnSymbol} %AMS symbols vs Mn Symbol check??
\usepackage{tikz}
\usepackage{pdflscape}
\usepackage{tikz-cd}
\usetikzlibrary{calc,arrows}
\usetikzlibrary{shapes.geometric}
\usepackage{wrapfig}
\usepackage{subcaption}
\usepackage{pdflscape}
\usepackage{blindtext}
\usepackage{multirow}
\usepackage{graphicx}
\newcommand\numberthis{\addtocounter{equation}{1}\tag{\theequation}}
\usetikzlibrary{backgrounds}
\usetikzlibrary{patterns,shapes.misc, positioning}
\usepackage{pgfplots}
\usepackage[overload]{empheq}

\newcommand{\SMR}{\textnormal{SMR}}
\newcommand{\MBPD}{\textnormal{MBPD}}
 
 \newcommand{\AG}{\operatorname{AG}}

\usetikzlibrary{snakes}

\newtheorem{theorem}{Theorem}[section]
\newtheorem{lemma}[theorem]{Lemma}
\newtheorem{corollary}[theorem]{Corollary}

\newtheorem{definition}[theorem]{Definition}
\newtheorem{example}[theorem]{Example}

\newtheorem{proposition}[theorem]{Proposition}

\newtheorem{note}[theorem]{Note}
 \newtheorem{illustration}[theorem]{Illustration} 

\allowdisplaybreaks
 \usepackage{comment}
 \usepackage{theoremref}
 \usepackage{hyperref}

 \usepackage{bibentry}
\begin{document}\begin{frontmatter}		
\title{On modular balanced partition designs}
\author[a]{S. Karthik}
\ead{karthik_p210136ma@nitc.ac.in}	
\author[b]{Peter J. Cameron}
\ead{pjc20@st-andrews.ac.uk}
\author[a]{Krishnan Paramasivam}
\ead{sivam@nitc.ac.in}
\address[a]{Department of Mathematics, National Institute of Technology Calicut \\ Kozhikode~\textnormal{673601}, India. }
  		\address[b]{School of Mathematics and Statistics, University of St Andrews, St Andrews, Fife KY16
9SS, UK}

\begin{abstract}
Let $X$ be a finite set of integers with cardinality $\nu = \kappa \lambda$. A \emph{modular balanced partition design} is a triplet $(X, \mathcal{A}, \mathcal{B})$ satisfying the following conditions:
\begin{enumerate}
    \item[(i)] $\mathcal{A}$ is a partition of $X$ into $\kappa$ blocks of size $\lambda$, such that every element of $X$ appears in exactly one block. If $\mathcal{A} = \{A_1, A_2, \ldots, A_{\kappa}\}$, then
    $
        \sum_{a\in A_i} a \equiv i \lambda \pmod{\nu}$, for each $i=1,2,\ldots,\kappa.
    $
    \item[(ii)] $\mathcal{B}$ is a partition of $X$ into $\lambda$ blocks of size $\kappa$, such that every element of $X$ appears in exactly one block. If $\mathcal{B} = \{B_1, B_2, \ldots, B_{\lambda}\}$, then
    $\sum_{b\in B_j} b \equiv j \kappa \pmod{\nu}$, for each $j=1,2,\ldots,\lambda.
    $
    \item[(iii)] $A_i \cap B_j$ has exactly one element, for any $A_i \in \mathcal{A}$, and $B_j \in \mathcal{B}$.
\end{enumerate}

In this article, we prove the necessary conditions for the existence of a modular balanced partition design. Moreover, we investigate and identify a relationship between a modular balanced partition design and a subgroup magic rectangle. Then by using affine automorphisms of an Abelian group, we prove the existence of non-isomorphic modular balanced partition designs. Finally, we provide a method to construct a transversal design via a modular balanced partition design.
 
\noindent \textbf{Keywords:} Modular congruence, partition, transversal design, block design, affine automorphism, subgroup magic rectangle.\\
  {2020 Mathematical Subject Classification:   05B05, 05B30, 05B99, 20K27}
\end{abstract}
%--------------------------
\end{frontmatter}

%--------------------------

\section{Introduction}

The study of additive properties within sets of integers and finite Abelian groups has been a central topic in combinatorial number theory. A classical problem in this area is the existence of {zero-sum subsets and partitions}, where elements of a set or group are divided into subsets whose elements sum to zero under addition or the group operation. This line of inquiry originated from the seminal result of Erdős, Ginzburg, and Ziv~\cite{Erdos1961}, which established that any sequence of $2n-1$ integers contains a subsequence of length $n$ whose sum is zero modulo $n$. This theorem and its extensions form the cornerstone of zero-sum theory. Over the years, the {zero-sum partition problem} has been extensively generalised and studied across various group-theoretic frameworks. In 2006, Gao and Geroldinger~\cite{MR2313123} provided a comprehensive survey of these developments, emphasizing the structural and extremal aspects of the zero-sum problems in finite Abelian groups. Further, algebraic treatments were developed by Geroldinger and Halter-Koch~\cite{zbMATH05018184}, who linked zero-sum phenomena with factorization theory, while Grynkiewicz~\cite{zbMATH06162097} discussed deeper structural results and invariants associated with additive partitions.

A natural extension of this framework is the concept of a {constant-sum partition}, where each subset in the partition has the same non-zero sum rather than a zero sum. Recent works, particularly those by Cichacz~\cite{MR3720542,MR3761934}, explored such constant-sum and zero-sum partitions within finite Abelian groups and investigated their applications in graph labeling problems and combinatorial designs. These studies revealed that the zero-sum and the constant-sum partitions are closely related, representing two complementary aspects of additive regularity in groups. Moreover, constant-sum partitions have found relevance in constructing balanced combinatorial structures such as group distance-magic labeling problems, difference families, and modular array designs, which connect algebraic group properties with combinatorial uniformity.

However, an intriguing question arises, when the condition of uniformity is relaxed: \emph{What happens if the subset sums are all distinct constants rather than equal or zero.$?$} This situation gives rise to a new combinatorial setting, where partitions are characterized not by uniform additive behavior; but by structured diversity in their subset sums. Investigation of such {distinct-sum} or {variable-constant partitions} opens new directions in additive combinatorics and provides potential applications in array constructions, group-based designs, and cryptographic structures.

\section{Preliminaries}
Throughout this article, $\Gamma$ is a finite additive Abelian group with identity element $0$. If $H$ is a subgroup of $\Gamma$, denoted by $H\le \Gamma$, then the left coset of \( H \) is a set
\(
x+H= \{x+h : h \in H\}\), where \( x \in \Gamma \) is some fixed element. Moreover, the union of all the left cosets of \( H\) equals \( \Gamma \), all the left cosets of \( H\) are of the same size as \( H \), and the set of all the left cosets of \( H \) forms a partition of \( \Gamma \).  Further, $[\Gamma : H]$ is the index of the subgroup $H$ in $\Gamma$. A non-zero element $g\in \Gamma$ is an involution if $g=-g$, where $-g$ is the inverse of $g$ in $\Gamma$ and, in fact, $g \in \Gamma$ is involution if and only if $o(g)=2$.  We use $I(\Gamma)$ to denote the set of all involutions in $\Gamma$.

\begin{lemma}[\cite{MR2239315}]{\label{lemma1.4}}
 Let $\displaystyle\sum_{g \in \Gamma} g = h$. If $I(\Gamma)=\{g_\iota\}$, then  $h=g_\iota$, otherwise $h=0$.
\end{lemma}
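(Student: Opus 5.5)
We prove the statement by pairing each element of $\Gamma$ with its inverse, so that everything cancels except the elements that are their own inverses. Let $S=\{g\in\Gamma : g=-g\}=\{0\}\cup I(\Gamma)$. The map $g\mapsto -g$ is an involutive bijection of $\Gamma$ whose fixed-point set is exactly $S$. Hence $\Gamma\setminus S$ splits into disjoint pairs $\{g,-g\}$ with $g\neq -g$. Each such pair contributes $g+(-g)=0$ to the sum, and since $\Gamma$ is Abelian we may reorder the summation freely. Therefore
\[
h=\sum_{g\in\Gamma} g=\sum_{g\in S} g=\sum_{g\in I(\Gamma)} g .
\]
This reduces the problem to summing the involutions.

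Next we identify the structure of $S$. Because $\Gamma$ is Abelian, $S$ is a subgroup: it contains $0$, and if $2a=0$ and $2b=0$ then $2(a-b)=0$. Every nonzero element of $S$ has order $2$, so $S$ is an elementary Abelian $2$-group. Consequently $S\cong(\mathbb{Z}_2)^m$ for some $m\ge 0$, and $|I(\Gamma)|=2^m-1$.

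The argument now splits into three cases according to $m$.
\begin{itemize}
\item If $m=0$, there are no involutions and $h=0$.
\item If $m=1$, then $I(\Gamma)=\{g_\iota\}$ is a single element and $h=g_\iota$.
\item If $m\ge 2$, we must show that the sum of all elements of $(\mathbb{Z}_2)^m$ is $0$. We check this coordinate by coordinate. In each coordinate, exactly $2^{m-1}$ of the vectors have entry $1$. Since $2^{m-1}$ is even when $m\ge 2$, each coordinate of the total sum is $0$, so $h=0$.
\end{itemize}
These three cases together give exactly the dichotomy in Lemma~\ref{lemma1.4}: $h=g_\iota$ when $\Gamma$ has a unique involution, and $h=0$ otherwise.

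The only step that needs any care is the case $m\ge 2$. It requires recognising that the involutions together with $0$ form a subgroup isomorphic to $(\mathbb{Z}_2)^m$; once that is established, the coordinate count finishes the proof. If one prefers to avoid coordinates, there is an alternative argument. Pick any $s\in S\setminus\{0\}$. Translation by $s$ permutes $S$, so $\Sigma=\sum_{x\in S}(x+s)=\Sigma+|S|s$, which gives $|S|s=0$. That identity holds trivially and does not determine $\Sigma$, so it is not useful. The better route is to use a subgroup $K\le S$ of index $2$ together with a coset representative $t\notin K$:
\[
\Sigma_S=\Sigma_K+\Sigma_{t+K}=2\Sigma_K+|K|\,t=|K|\,t .
\]
This vanishes because $|K|=2^{m-1}$ is even for $m\ge 2$.
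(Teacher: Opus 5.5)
Your proof is correct. Cancelling each $g$ against $-g$ reduces $h$ to the sum over $S=\{0\}\cup I(\Gamma)$. Since $\Gamma$ is finite, $S$ is a finite elementary Abelian $2$-group, so $S\cong(\mathbb{Z}_2)^m$. Its element sum is $0$ precisely when $m\neq 1$, which follows from either your coordinate count or your coset computation $\Sigma_S=2\Sigma_K+|K|\,t=|K|\,t$.

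The paper gives no proof of its own; it quotes the lemma from Combe and Nelson \cite{MR2239315}, and your argument is the standard one behind that citation. The only thing to tidy is the aborted translation attempt ($\Sigma=\Sigma+|S|s$) in your last paragraph, which leads nowhere and can be deleted.
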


Let $\Gamma$ be a group acting on a non-empty set $X$. For an element $x \in X$, the \emph{orbit} of $x$ under the action of $\Gamma$ is the set
\(
\operatorname{Orb}_\Gamma(x) = \{\, g + x : g \in \Gamma \,\}
\) and the \emph{stabiliser} of $x$ in $\Gamma$ is defined as
\(
\operatorname{Stab}_\Gamma(x) = \{\, g \in \Gamma : g + x = x \,\},
\)
which forms a subgroup of $\Gamma$ consisting of all elements that fix $x$ under the group action. Further, a group action is equivalent to a group homomorphism $\psi: \Gamma \rightarrow  Sym(X)$, where $Sym(X)$ is the symmetric group on $X$.

\begin{theorem}[Orbit-Stabilizer Theorem \cite{zbMATH05233000}]{\label{ost}}
    Let $\Gamma$ be a group acting on $X$ and $x \in X$. Then 
\begin{itemize}
    \item[$a)$]  the stabiliser \(
\operatorname{Stab}_\Gamma(x)\) is a subgroup \( \Gamma\), and
  \item[$b)$] there is a bijection between the orbit of $x$ and the set of right cosets of \(
\operatorname{Stab}_\Gamma(x)\) 
in $\Gamma$.
\end{itemize}

\end{theorem}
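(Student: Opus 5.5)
The statement is the standard Orbit--Stabilizer Theorem for a group $\Gamma$ acting on a set $X$ (written additively, as elsewhere in the paper). The plan is to verify the subgroup axioms for the stabiliser directly from the action axioms, and then construct an explicit map from cosets of the stabiliser to the orbit and check that it is well defined and bijective. For part $a)$ I will use the action axioms $0+x=x$ and $g+(h+x)=(g+h)+x$. These give $0\in\operatorname{Stab}_\Gamma(x)$. If $g,h\in \operatorname{Stab}_\Gamma(x)$, then $(g+h)+x=g+(h+x)=g+x=x$. Finally, applying $-g$ to both sides of $g+x=x$ gives $x=(-g)+x$. Hence the stabiliser is a subgroup of $\Gamma$, and since $\Gamma$ is finite, closure alone would already suffice.

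For part $b)$, write $S=\operatorname{Stab}_\Gamma(x)$. Since $\Gamma$ is Abelian, left and right cosets coincide, so "right cosets" causes no difficulty; I will still write the argument so that it works for a general group by using cosets of the form $g+S$. Define
\[
\Phi: \{g+S : g\in\Gamma\}\longrightarrow \operatorname{Orb}_\Gamma(x), \qquad \Phi(g+S)=g+x.
\]
The central computation is the equivalence
\[
g+x=g'+x \iff (-g'+g)+x=x \iff -g'+g\in S \iff g+S=g'+S,
\]
which uses the compatibility axiom to move $-g'$ across. Read from right to left, it shows that $\Phi$ is well defined. Read from left to right, it shows that $\Phi$ is injective. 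Surjectivity is immediate, since every orbit element has the form $g+x=\Phi(g+S)$.

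The only delicate point is ensuring that the coset convention matches the side on which $\Gamma$ acts. For a left action the natural cosets are $g+S$, and with right cosets one would instead use $S+g\mapsto (-g)+x$. Because the paper's groups are Abelian this distinction disappears, and I will remark on it in one line. As a final remark, the bijection yields $|\operatorname{Orb}_\Gamma(x)|=[\Gamma:\operatorname{Stab}_\Gamma(x)]$, which is presumably the form in which the result is applied later.
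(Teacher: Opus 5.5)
Your proof is correct. The paper gives no proof of its own; it quotes the theorem from Cameron's \emph{Introduction to Algebra}. Your argument is the standard textbook proof: you check the subgroup axioms directly from $0+x=x$ and $g+(h+x)=(g+h)+x$. You then show that $g+S\mapsto g+x$ is well defined, injective and surjective using the single chain $g+x=g'+x\iff -g'+g\in S\iff g+S=g'+S$.

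One side remark of yours is inaccurate, though nothing in the proof depends on it. Not every group to which the paper applies this theorem is Abelian. Theorem~\ref{thm:iso-count} and Lemma~\ref{thm:orbit-count} use it for $\AG(\nu)$, which is non-Abelian for $\nu\ge 3$: for example, $\tau_{-1,0}\circ\tau_{1,1}$ sends $x\mapsto -x-1$, while $\tau_{1,1}\circ\tau_{-1,0}$ sends $x\mapsto -x+1$. So your general-group formulation is the relevant one, not a convenience. Your right-coset variant $S+g\mapsto(-g)+x$ is what literally matches the statement's ``right cosets'' for the left action $g+x$, and it is correct. For the paper's uses, only the consequence $|\operatorname{Orb}_\Gamma(x)|=[\Gamma:\operatorname{Stab}_\Gamma(x)]$ matters, and it holds either way.
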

\begin{lemma}[Burnside's Lemma \cite{zbMATH05233000}]
\label{lem:burnside}
Let a finite group $\Gamma$ act on a finite set $X$. For each $g\in \Gamma$, define
\(
\operatorname{Fix}(g)=\{x\in X:g+x=x\}.
\)
Then the number of orbits of $X$ under the action of $\Gamma$ is
\[
|X/\Gamma|
=
\frac{1}{|\Gamma|}
\sum_{g\in \Gamma}
|\operatorname{Fix}(g)|.
\]
\end{lemma}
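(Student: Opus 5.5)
We prove Burnside's Lemma by counting the set of fixed pairs
\[
S=\{(g,x)\in \Gamma\times X : g+x=x\}
\]
in two ways. The identity to establish is
\[
|X/\Gamma|\cdot|\Gamma|=\sum_{g\in\Gamma}|\operatorname{Fix}(g)|,
\]
and dividing by $|\Gamma|$ then gives the statement.

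\emph{First count (fixing $g$).} For each fixed $g\in\Gamma$, the pairs $(g,x)\in S$ are exactly those with $x\in\operatorname{Fix}(g)$. Summing over $g$ gives
\[
|S|=\sum_{g\in\Gamma}|\operatorname{Fix}(g)|.
\]

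\emph{Second count (fixing $x$).} For each fixed $x\in X$, the pairs $(g,x)\in S$ are exactly those with $g\in\operatorname{Stab}_\Gamma(x)$. Hence
\[
|S|=\sum_{x\in X}|\operatorname{Stab}_\Gamma(x)|.
\]
By Theorem~\ref{ost}(b), the orbit of $x$ is in bijection with the cosets of $\operatorname{Stab}_\Gamma(x)$ in $\Gamma$. By Theorem~\ref{ost}(a) the stabiliser is a subgroup, so Lagrange's theorem gives
\[
|\operatorname{Stab}_\Gamma(x)|=\frac{|\Gamma|}{|\operatorname{Orb}_\Gamma(x)|}.
\]

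\emph{Grouping by orbits.} The orbits partition $X$, because the action is a homomorphism into $\mathrm{Sym}(X)$: if $y=g+x$, then $x=(-g)+y$, and orbits are either equal or disjoint. Split the sum $\sum_{x\in X}|\Gamma|/|\operatorname{Orb}_\Gamma(x)|$ over the orbits. Each orbit $O$ contributes $|O|$ terms, each equal to $|\Gamma|/|O|$, so it contributes $|\Gamma|$ in total. Therefore
\[
|S|=|\Gamma|\cdot|X/\Gamma|.
\]
Equating the two expressions for $|S|$ and dividing by $|\Gamma|$ finishes the proof.

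The only delicate point is justifying that the orbits partition $X$ and that the orbit–stabiliser relation holds with cardinalities. Both follow from the group action axioms together with Theorem~\ref{ost}. The rest is routine double counting.
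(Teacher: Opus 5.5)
Your double-counting proof is correct and complete: you count the pairs $(g,x)$ with $g+x=x$ once by $g$ and once by $x$, use $|\operatorname{Stab}_\Gamma(x)|\,|\operatorname{Orb}_\Gamma(x)|=|\Gamma|$, and then sum over each orbit to get $|\Gamma|$ per orbit. The paper gives no proof of its own; it only quotes the lemma from Cameron's \emph{Introduction to Algebra}, and your argument is the standard textbook proof, so there is nothing further to compare.
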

 
For more group theory  notations and terminology, we refer to Cameron \cite{zbMATH05233000}, Warner\cite{MR1068318} and Silverman \cite{MR4423367}. In this article, for two integers $i$ and $j$ with $i<j$, we use $[i,j]$ to denote the set of integers $i,i+1,\ldots, j$. Now, 
we recall some important results from partitions of Abelian group.  In 2009, Kaplan et al.~\cite{MR2510327} introduced the concept of zero-sum partitions for subsets in the Abelian group $\Gamma$.
Let $\Gamma$ be an Abelian group and let $A$ be a finite subset of $\Gamma\setminus\{0\}$ with $|A|=n$.  
Then the set $A$ is said to have the \emph{zero-sum partition property} if for every partition
\(
n = r_1 + r_2 + \cdots + r_t
\)
of $n$ with $r_i \ge 2$ for $i \in [1,t]$, there exists a partition of $A$ into pairwise disjoint subsets
$A_1,A_2,\dots,A_t$ such that $|A_i|=r_i$ and
\(
  \sum_{a\in A_i} a = 0, ~  i \in [1,t].
\) Later, Cichacz~\cite{MR3761934} considered a  restricted version of this property, called the \emph{$m$-zero-sum partition property}. 
In this case, if $m$ divides $n$ with $m\ge 2$, the set (or group) can be partitioned into pairwise disjoint subsets $A_i$ such that $|A_i|=m$ and
\(
  \sum_{a\in A_i} a = 0 ,  i \in [1,t].
\)
Thus, the $m$-zero-sum partition property can be viewed as a specialization of the zero-sum partition property in which all parts of the partition have the same size. In 2019, Freyberg~\cite{MR4145425} further generalized this idea by introducing the notion of a \emph{constant sum partition} of $\Gamma$, where
\(
 \sum_{a \in A_i} a = g, ~g \in \Gamma,
\) $i \in [1,t]$. 
For more details, see~\cite{MR3720542,MR2313123,MR3418508}.

% In 2009, Kaplan et al. \cite{MR2510327} introduced the concept of zero-sum partitions for subsets in the Abelian group $\Gamma$.
% Let $\Gamma$ be an Abelian group and let $A$ be a finite subset of $\Gamma\setminus\{0\}$ with $|A|=n$.  
% Then the set $A$ is said to have the  {zero-sum-partition property} if for every partition
% \(
% n = r_1 + r_2 + \cdots + r_t
% \)
% of $n$ with $r_i \ge 2$ for $1\le i\le t$, there exists a partition of $A$ into pairwise disjoint subsets
% $A_1,A_2,\dots,A_t$ such that $|A_i|=r_i$ and
% \(\displaystyle
% \sum_{a\in A_i} a = 0 \quad\text{for } 1\le i\le t.
% \)  In 2018, Cichacz \cite{MR3761934}, extended the zero sum partition property to the $m$-zero sum partition property, if $m$ divides $n, m \geqslant 2$ and there is a partition of $\Gamma$ into pairwise disjoint subsets \(A_i\) such that $\left|A_i\right|=m$ and $\sum_{a \in A_i} a=0$ for $1 \le i \le t $. In 2019, Freyberg \cite{MR4145425}, generalized it as a {constant sum partition} of $\Gamma$ by $\displaystyle\sum_{a \in A_i} a = g, g \in \Gamma$, for every $1 \le i \le t  $.
% For more details, see \cite{MR3720542,MR2313123,MR3418508}.

As a generalization of constant-sum partitions of an Abelian group and group magic rectangle \cite{MR4149158}, the authors introduced the concept of subgroup magic rectangle \cite{karthik2025existence}, in which the sets of all individual row sums and all individual column sums, are distinct.

\begin{definition}[\cite{karthik2025existence}]Let $\Gamma$ be a finite additive Abelian group. A subgroup magic rectangle is an $m \times n$ array $\mathbb{S}=(\zeta_{ij})$, where $\zeta_{ij} \in \Gamma$ for all $i,j$ and $|\Gamma|=mn$, each element appears exactly once in such a way that the set of all row-sums forms a subgroup of order $m$ and the set of all column-sums forms a subgroup of order $n$. It's denoted as $\SMR_{\Gamma}(m,n : H_1, H_2)$, where $H_1, H_2 \le \Gamma$.
\end{definition}

In addition, we recall necessary fundamentals and important classes of combinatorial designs that will be useful in the subsequent discussion. 
Combinatorial design theory deals with the arrangement of elements of a finite set into subsets, called \emph{blocks}, in such a way that certain balance or uniformity properties are satisfied. 
These structures play a central role in combinatorics and have numerous connections with finite geometry, coding theory, and group-based constructions.

\begin{definition}[\cite{zbMATH00050655}]
A $t$-design with parameters $(v,k,\lambda)$ (or a $t$-$(v,k,\lambda)$ design) is a pair 
$\mathcal{D}=(X,\mathcal{B})$, where $X$ is a set of `points' of cardinality $v$, and 
$\mathcal{B}$ is a collection of $k$-element subsets of $X$ called `blocks', with the 
property that any $t$ points are contained in precisely $\lambda$ blocks.\end{definition}

To rule out trivial or degenerate cases, we assume that both $X$ and 
$\mathcal{B}$ are non-empty and that the parameters satisfy $v \ge k \ge t$, 
which ensures that $\lambda > 0$. A $t$-design with $\lambda = 1$ is  
a \emph{Steiner system}. Such system is commonly denoted by $S(t,k,v)$, and 
the notation is sometimes generalized to $S_{\lambda}(t,k,v)$ when referring 
to designs with an arbitrary value of $\lambda$. A $2$-design is commonly referred to as a \emph{balanced incomplete-block design} (BIBD) or shortly, a \emph{block design}. 
The term ``balanced'' reflects the condition that every pair of points 
occurs together in the same number of blocks, while ``incomplete'' indicates 
that the size of each block is strictly smaller than the number of points. Formally, a balanced incomplete-block design is a pair 
$(X,\mathcal{B})$, where $X$ is a set of $v$ points and 
$\mathcal{B}$ is a collection of subsets of $X$ called blocks such 
that $|B|=k$ for every $B\in\mathcal{B}$, and every pair of distinct points 
of $X$ appears together in exactly $\lambda$ blocks. 
Such a design is denoted by $\mathrm{BIBD}(v,k,\lambda)$. 
Suppose $(X,\mathcal{A})$ and $(\mathcal{Y},\mathcal{B})$ are two block designs with $|X|=|\mathcal{Y}|$. We say that $(X,\mathcal{A})$ and $(\mathcal{Y},\mathcal{B})$ are \emph{isomorphic} if there exists a bijection $\alpha:X\to\mathcal{Y}$ such that a subset $A$ of $X$ is a block of $\mathcal{A}$ if and only if the image $\alpha(A)=\{\alpha(x):x\in A\}$ is a block of $\mathcal{B}$. In this case, the bijection $\alpha$ is called an \emph{isomorphism} between the block designs. When $(X,\mathcal{A})=(\mathcal{Y},\mathcal{B})$, an isomorphism $\alpha:(X,\mathcal{A})\to(X,\mathcal{A})$ is called an \emph{automorphism} of the block design $(X,\mathcal{A})$.

Another important combinatorial design closely related to Latin square and orthogonal array, is the {transversal design}\cite{MR1370815}, which acts as a bridge between combinatorial design and additive group theory. A transversal design $\mathrm{TD}_{\lambda}(k,n)$ with group size $n$, block size $k$, and index $\lambda$, is a triple $(X,\mathcal{A},\mathcal{B})$,
where $X$ is a set of $kn$ elements. The set $\mathcal{A}$ forms a partition of $X$
into $k$ classes, called \emph{groups}, each containing exactly $n$ elements.
Furthermore, $\mathcal{B}$ is a collection of $k$-subsets of $X$, referred to as
\emph{blocks}. These objects satisfy the property that any unordered pair of
distinct elements of $X$ is either contained in exactly one group or appears
together in precisely $\lambda$ blocks, but not both. In particular, when $\lambda = 1$, we simply write $TD(k, n)$. 
Transversal designs and their variants serve as building blocks for many algebraic and group-based constructions, including Latin squares and magic-type arrays. 
For comprehensive references on combinatorial designs, we refer the reader to Cameron \cite{zbMATH00050655}, Colbourn \cite{MR1370815, MR1435524},  Lindner \cite{MR2469212}, and Stinson \cite{MR2029249}.

\section{Modular Balanced Partition Design} 
Motivated by the study of integer partitions, constant-sum partitions of elements in additive Abelian groups, and combinatorial design theory, a new combinatorial structure, namely, a modular balanced partition design, is introduced and studied. Furthermore, the existence and structural properties of these designs are investigated with respect to various sets of integers and finite Abelian groups.
\begin{definition}\label{def1}
 Let $X$ be a finite set of integers with cardinality $\nu =\kappa \lambda$. A modular balanced partition design is a triple $(X, \mathcal{A}, \mathcal{B})$, which satisfies the following conditions.  
\begin{enumerate}
    \item[$(i)$] $\mathcal{A}$ partitions $X$ into $\kappa$ blocks of size $\lambda$, such that every element in $X$ appears in exactly one block and whenever $\mathcal{A} = \{A_1, A_2, \ldots, A_{\kappa}\}$, then
        \(
        \sum_{a\in A_i} a \equiv i \lambda \bmod{\nu},   i \in [1, \kappa].\)
        \item[$(ii)$] $\mathcal{B}$ partitions $X$ into $\lambda$ blocks of size $\kappa$, such that every element in $X$ appears in exactly one block and whenever $\mathcal{B} = \{B_1, B_2, \ldots, B_{\lambda}\}$, then
        \(
        \sum_{b\in B_j} b \equiv j \kappa \bmod{\nu},  j \in [1, \lambda].\)
       \item[$(iii)$]  $A_i\cap B_j$ has  exactly one element, where $A_i\in \mathcal{A}$ and  $B_j \in \mathcal{B}$. 
\end{enumerate} 
\end{definition}
\noindent A \textit{modular balanced partition design} is shortly termed as ${(\nu,\kappa,\lambda)\text{-}\MBPD}$, where $\nu= |X|, \kappa=|\mathcal{A}|,$ and $\lambda=|\mathcal{B}|$.  Modular conditions on the sums of elements in the blocks ensure that the partitions exhibit cyclic and symmetric properties, which are highly desirable in many applications. Note that if $X\subset \mathbb{Z}$ has $\nu$-elements, we use addition modulo $\nu$ and similarly, if $X=(\Gamma, +)$, where $(\Gamma, +)$ is a finite additive group, we use binary operation $+$ to verify the condition (ii) and (ii) in the above definition. \\
Now, we give two examples in which the first one has $X$ as the set of 15 integers and in the second one,  $X$ is the cyclic group of order 16. 
\begin{example} Consider a $(15, 3,5)$-modular balanced partition design, where \begin{align*}
    X&  = \{-13, -12, -3, 0, 1, 2, 4, 5, 6, 7, 8, 9 , 13, 16, 17\} \\
    \mathcal{A}  & =\{\{-12, -3, 0, 6, 9\}, \{2, 4, 5, 8, 16\}, \{-13, 1, 7, 13, 17\} \}\\
    \mathcal{B}  & = \{\{0,7,8\}, \{-3,1,5\}, \{-12,16,17\}, \{2,9,13\}, \{-13,4,6\}\}.
\end{align*}

\end{example}
\begin{example} Consider a  $(16, 4,4)$-modular balanced partition design, where  \begin{align*}
    X&  =(\mathbb{Z}_{16}, \oplus_{16}),\\ 
    \mathcal{A} &  =\{\{1,2,3,10\}, \{5,0,4,11\},\{8,13,7,12\},\{6,9,14,15\}\}\\
    \mathcal{B} &  = \{\{10,11,15,12\}, \{1,5,6,8\},\{2,0,9,13\}, \{3,4,14,7\} \}.
\end{align*}
\end{example}
The above structure can be visualised as a grid or affine plane, where the rows represent blocks from $\mathcal{A}$ and the columns represent blocks from $\mathcal{B}$, in which  each element in $(i,j)$-th cell is the unique element of $A_i\cap B_j$.
Similarly, if $X=\mathbb{Z}_{36}$, then Figure \ref{fig:7.1}, represents an $(36,6,6)$-modular balanced partition design.
\begin{figure}[h]
    \centering
   \begin{tikzpicture}[scale=1]
       \node at (0,0){
\begin{tabular}{|c|c|c|c|c|c|}
\hline
15 & 22 & 9  & 14 & 26 & 28\\\hline
18 & 8  & 1  & 21 & 32 & 4\\\hline
25 & 3  & 16 & 34 & 2  & 10\\\hline
20 & 27 & 11 & 33 & 24 & 17\\\hline
6  & 29 & 12 & 7  & 35 & 13\\\hline
30 & 31 & 5  & 23 & 19 & 0\\ \hline
\end{tabular}};
  \end{tikzpicture}
    \caption{(36,6,6)\text{-}\MBPD }
    \label{fig:7.1}
\end{figure}

\begin{note}
     In a $(\nu, \kappa, \lambda)$-$\MBPD$, the total number of distinct elements is $\kappa\lambda$, the total number of blocks is $\kappa+\lambda$, and every element in $X$ occurs in exactly $2$ blocks.
\end{note}
\begin{note}
    If $\kappa=\lambda$, then the set of all collection of blocks in modular balanced partition designs $ \mathcal{A}\cup \mathcal{B}$ is contained in a collection of blocks of a Steiner $\kappa$-system $S(1,\kappa, \nu)$.  
\end{note}

\noindent For a  modular balanced partition design $(X, \mathcal{A}, \mathcal{B})$ with \(\mathcal{A}=\{A_i: i \in [1,\kappa]\}\) and \(\mathcal{B}=\{B_j: j \in [1,\lambda]\}\). We use 
\[
\mathcal{A}_{\mathrm{sum}}
= \sum_{i=1}^{\kappa}\sum_{a\in A_i} a,
\qquad
\mathcal{B}_{\mathrm{sum}}
= \sum_{j=1}^{\lambda}\sum_{b\in B_j} b,
\]
and
\[
\Omega_{\mathcal{A}}
= \biggl\{\sum_{a\in A_i} a : i\in [1,\kappa]\biggr\},
\qquad
\Omega_{\mathcal{B}}
= \biggl\{\sum_{b\in B_j} b : j\in [1,\lambda]\biggr\}.
\]
Thus, \(\mathcal{A}_{\mathrm{sum}}\) and \(\mathcal{B}_{\mathrm{sum}}\) are the total sums of all elements in all the blocks of \(\mathcal{A}\) and \(\mathcal{B}\), respectively, while \(\Omega_{A}\) and \(\Omega_{B}\) are the set of all sums of all the blocks of $\mathcal{A}$ and $\mathcal{B}$, respectively.
From Definition \ref{def1}, we obtain the following propositions,
\begin{proposition}{\label{lemma1}}
For any modular balanced partition design $(X, \mathcal{A}, \mathcal{B})$,  
$\sum_{x\in X}x=\mathcal{A}_{\mathrm{sum}}=\mathcal{B}_{\mathrm{sum}}$.
\end{proposition}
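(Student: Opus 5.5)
The identity follows directly from the partition properties in conditions $(i)$ and $(ii)$ of Definition~\ref{def1}. The key fact is that summing a function over a finite set equals summing it block by block over any partition of that set into pairwise disjoint blocks. This holds whether the sum is taken in $\mathbb{Z}$ or in the ambient group $(\Gamma,+)$, because addition is commutative and associative and each element is counted once.

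First, use condition $(i)$: the blocks $A_1,\ldots,A_\kappa$ are pairwise disjoint and their union is $X$, since every element of $X$ lies in exactly one block. Regrouping the terms of $\sum_{x\in X}x$ by the block containing each $x$ gives
\[
\sum_{x\in X}x=\sum_{i=1}^{\kappa}\sum_{a\in A_i}a=\mathcal{A}_{\mathrm{sum}}.
\]
Second, apply the same argument to $\mathcal{B}$ using condition $(ii)$: the blocks $B_1,\ldots,B_\lambda$ partition $X$, so $\sum_{x\in X}x=\mathcal{B}_{\mathrm{sum}}$. The two identities together give the claim.

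As a cross-check, condition $(iii)$ gives the same conclusion independently. Each $x\in X$ lies in a unique $A_i$ and a unique $B_j$, so $x$ is the unique element of $A_i\cap B_j$. This sets up a bijection between $X$ and the $\kappa\lambda$ cells $(i,j)$, so that $X$ is arranged as a $\kappa\times\lambda$ grid. $\mathcal{A}_{\mathrm{sum}}$ is then the sum of the row sums of this grid and $\mathcal{B}_{\mathrm{sum}}$ the sum of its column sums, and both equal the total sum of the grid by interchanging a finite double sum.

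There is no genuine obstacle here. The only point needing care is to state in which structure the equality holds. If $X\subset\mathbb{Z}$, the equality holds in $\mathbb{Z}$, and hence also modulo $\nu$. If $X=\Gamma$, it holds in $\Gamma$. In neither case are the modular congruences of conditions $(i)$ and $(ii)$ needed.
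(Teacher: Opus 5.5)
Your proof is correct and is the paper's argument: the paper notes that each element of $X$ contributes exactly once to $\mathcal{A}_{\mathrm{sum}}$ and exactly once to $\mathcal{B}_{\mathrm{sum}}$ because $\mathcal{A}$ and $\mathcal{B}$ partition $X$, which is your regrouping step. Your grid cross-check via condition $(iii)$ is valid but not needed.
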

\begin{proof}
   The proof follows from the fact that each integer contributes exactly once to \(\mathcal{A}_{\mathrm{sum}}\) and exactly once to \(\mathcal{B}_{\mathrm{sum}}\).
\end{proof} 
The converse of the above Proposition \ref{lemma1} need not be true. 
\begin{proposition}
\label{thmim} Let $X=\mathbb{Z}_\nu$. If $(X, \mathcal{A}, \mathcal{B})$ is a $(\nu,\kappa,\lambda)$-$\MBPD$, then $ \Omega_{\mathcal{A}}$ and $ \Omega_{\mathcal{B}}$ are two subgroups of order $\kappa$ and $\lambda$, respectively.
\end{proposition}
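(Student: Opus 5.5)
By condition $(i)$ of Definition \ref{def1}, the block sums of $\mathcal{A}$ are, modulo $\nu$, exactly the residues $i\lambda$ for $i\in[1,\kappa]$. Since the arithmetic is in $\mathbb{Z}_\nu$, I would identify $\Omega_{\mathcal{A}}$ with the set
\[
\Omega_{\mathcal{A}}=\{\lambda,2\lambda,\ldots,\kappa\lambda\}\subseteq \mathbb{Z}_\nu .
\]
The plan is to show that this set is precisely the cyclic subgroup $\langle \lambda\rangle$ of $\mathbb{Z}_\nu$, and that it has $\kappa$ elements. The same argument with the roles of $\kappa$ and $\lambda$ exchanged, using condition $(ii)$, will handle $\Omega_{\mathcal{B}}$.

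\textbf{Step 1: the set has $\kappa$ distinct elements.} The elements $i\lambda$ with $i\in[1,\kappa]$ are pairwise distinct modulo $\nu=\kappa\lambda$. Indeed, if $i\lambda\equiv i'\lambda \pmod{\kappa\lambda}$, then $\kappa$ divides $i-i'$. Since $|i-i'|<\kappa$, this forces $i=i'$. Hence $|\Omega_{\mathcal{A}}|=\kappa$.

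\textbf{Step 2: the set is the subgroup $\langle\lambda\rangle$.} The element $\lambda$ has order exactly $\kappa$ in $\mathbb{Z}_\nu$, because $\nu/\gcd(\nu,\lambda)=\kappa\lambda/\lambda=\kappa$. So $\langle\lambda\rangle=\{t\lambda : t\in[1,\kappa]\}$. This is a set of $\kappa$ elements, and it includes $\kappa\lambda\equiv 0$. It coincides with $\Omega_{\mathcal{A}}$, so $\Omega_{\mathcal{A}}$ is a subgroup of order $\kappa$. Closure can also be checked directly: $i\lambda+i'\lambda=(i+i')\lambda$ reduces to some $t\lambda$ with $t\in[1,\kappa]$ after subtracting $\kappa\lambda$ if needed. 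Inverses also stay in the set, since $-i\lambda\equiv(\kappa-i)\lambda$, with index $\kappa$ used when $i=\kappa$.

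\textbf{Step 3: the same argument for $\Omega_{\mathcal{B}}$.} By condition $(ii)$, $\Omega_{\mathcal{B}}=\{j\kappa : j\in[1,\lambda]\}$. Repeating Steps 1 and 2 with $\kappa$ and $\lambda$ interchanged shows that $\Omega_{\mathcal{B}}=\langle\kappa\rangle$, a subgroup of order $\lambda$.

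There is no serious obstacle here. The only point requiring care is interpretive: the block sums must be read as elements of $\mathbb{Z}_\nu$, that is, reduced modulo $\nu$, so that $\Omega_{\mathcal{A}}$ and $\Omega_{\mathcal{B}}$ live in the group. I would state this identification explicitly at the start of the proof.
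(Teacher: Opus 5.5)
Your proof is correct and follows the same route as the paper: read the block sums in $\mathbb{Z}_\nu$, identify $\Omega_{\mathcal{A}}$ and $\Omega_{\mathcal{B}}$ with the residues $\{i\lambda\}$ and $\{j\kappa\}$, and recognize these as the cyclic subgroups $\langle\lambda\rangle$ and $\langle\kappa\rangle$. The paper only asserts that these sets are the cyclic subgroups; you additionally prove that the $i\lambda$ are pairwise distinct and that $\lambda$ has order $\kappa$, which are exactly the steps the paper leaves implicit.
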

\begin{proof}
    If \(\mathcal{A}=\{A_i: i \in [1,\kappa]\}\) and \(\mathcal{B}=\{B_j: j \in [1,\lambda]\}\), then $\Omega_{\mathcal{A}}=\{i\lambda: i \in [0,\kappa-1]\}$ and $\Omega_{\mathcal{B}} =\{j\kappa: j \in [0,\lambda-1]\}$. Then, $\Omega_{\mathcal{A}}$ and $\Omega_{\mathcal{B}}$ are two cyclic subgroups of $\mathbb{Z}_{\nu}$ generated by $\lambda$ and $\kappa$, respectively.
\end{proof}
%\subsection{Necessary conditions for   modular balanced partition designs}
\noindent Now, we proceed to prove the necessary conditions for the existence of a modular balanced partition design. 
\begin{theorem}{\label{thmnew}}
    If there exists a $(\nu, \kappa, \lambda)$-$\MBPD$, then $\kappa\equiv \lambda \bmod{2}$.
\end{theorem}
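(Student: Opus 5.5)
Following Proposition \ref{lemma1}, I will compute $\sum_{x\in X}x$ modulo $\nu$ in two ways, once through $\mathcal{A}$ and once through $\mathcal{B}$, and compare the results.

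\textbf{Step 1: sum over $\mathcal{A}$.} By Definition \ref{def1}(i),
\[
\mathcal{A}_{\mathrm{sum}} \equiv \sum_{i=1}^{\kappa} i\lambda = \frac{\lambda\kappa(\kappa+1)}{2} = \frac{\nu(\kappa+1)}{2} \pmod{\nu}.
\]
Hence $\mathcal{A}_{\mathrm{sum}}\equiv 0 \pmod{\nu}$ when $\kappa$ is odd. When $\kappa$ is even, $\kappa+1$ is odd, so $\mathcal{A}_{\mathrm{sum}}\equiv \nu/2 \pmod{\nu}$.

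\textbf{Step 2: sum over $\mathcal{B}$.} By Definition \ref{def1}(ii), the same computation gives
\[
\mathcal{B}_{\mathrm{sum}}\equiv \frac{\nu(\lambda+1)}{2} \pmod{\nu}.
\]
This is $0$ when $\lambda$ is odd and $\nu/2$ when $\lambda$ is even.

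\textbf{Step 3: compare and conclude.} Proposition \ref{lemma1} gives $\mathcal{A}_{\mathrm{sum}}=\mathcal{B}_{\mathrm{sum}}$, so the two residues from Steps 1 and 2 must coincide. Suppose, for contradiction, that $\kappa\not\equiv\lambda \pmod 2$. Then exactly one of $\kappa,\lambda$ is even, so $\nu=\kappa\lambda$ is even and $\nu/2$ is a genuine nonzero residue modulo $\nu$. One of the two sums is then $\equiv 0$ and the other is $\equiv \nu/2\not\equiv 0 \pmod{\nu}$, which contradicts their equality. Therefore $\kappa\equiv\lambda \pmod 2$.

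The only delicate point is writing $\frac{\nu(\kappa+1)}{2}$ modulo $\nu$ correctly when $\nu$ is odd. This needs no extra care, because in that case both $\kappa$ and $\lambda$ are odd and the conclusion already holds. The argument works equally for $X\subset\mathbb{Z}$ and for $X=\mathbb{Z}_\nu$, since only congruences modulo $\nu$ are used.
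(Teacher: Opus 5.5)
Your proof is correct and follows essentially the same route as the paper. Both compute $\sum_{x\in X}x$ modulo $\nu$ once via $\mathcal{A}$ as $\nu(\kappa+1)/2$ and once via $\mathcal{B}$ as $\nu(\lambda+1)/2$, compare them using Proposition~\ref{lemma1}, and differ only cosmetically at the end: the paper reads off from $\nu(\kappa-\lambda)/2\equiv 0 \pmod{\nu}$ that $(\kappa-\lambda)/2$ is an integer, whereas you evaluate each residue as $0$ or $\nu/2$ by parity and reach a contradiction.
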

\begin{proof}
 Let $X$ be a finite set of integers with cardinality $\nu$, and let $\nu=\kappa \lambda$. Suppose \(\mathcal{A}=\{A_i: i \in [1,\kappa]\}\) and \(\mathcal{B}=\{B_j: j \in [1,\lambda]\}\).  Assume that there exists a $(\nu, \kappa, \lambda)$-$\MBPD$. Then we have \begin{align}
     \sum_{a\in A_i} a & \equiv i \lambda \bmod{\nu}, \quad i \in [1, \kappa],\\
     \sum_{b\in B_j} b & \equiv j \kappa \bmod{\nu}, \quad j \in [1, \lambda].
 \end{align}
      Since $\sum_{i=1}^{\kappa}\sum_{a\in A_i} a = \mathcal{A}_{\mathrm{sum}}$, $\sum_{j=1}^{\lambda}\sum_{b\in B_j} b= \mathcal{B}_{\mathrm{sum}}$ and by  Proposition \ref{lemma1}, we have, 
      \begin{align}\label{eqn3}
         \sum_{x\in X}x & \equiv \frac{\kappa\lambda(\lambda+1)}{2}\bmod{\kappa\lambda} \textnormal{~,~and~}  \sum_{x\in X}x \equiv \frac{\kappa\lambda(\kappa+1)}{2}\bmod{\kappa\lambda}. 
      \end{align} 
Therefore, \eqref{eqn3}, gives  \begin{align}\label{eqn4}
    \frac{\kappa\lambda(\kappa+1)}{2} & \equiv \frac{\kappa\lambda(\lambda+1)}{2}\bmod{\kappa\lambda}. 
\end{align}
From \eqref{eqn4}, $\frac{1}{2}(\kappa-\lambda)$ is an integer, and hence $\kappa$ and $\lambda$ are either both odd or both even.
\end{proof}
%-----------------
 \begin{corollary}\label{c} If $\nu \equiv 2 \bmod{4}$, then there is no $(\nu, \kappa, \lambda)$-$\MBPD$.
\end{corollary}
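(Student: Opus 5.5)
The corollary should follow from Theorem \ref{thmnew} by a parity argument on the factorisation $\nu=\kappa\lambda$. I would argue by contradiction: suppose $\nu\equiv 2 \bmod{4}$ and a $(\nu,\kappa,\lambda)$-$\MBPD$ exists. Then $\nu=\kappa\lambda$ for positive integers $\kappa,\lambda$, and Theorem \ref{thmnew} gives $\kappa\equiv\lambda\bmod{2}$.

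I would then split into the two cases this parity condition allows.
\begin{itemize}
\item If $\kappa$ and $\lambda$ are both odd, then $\nu=\kappa\lambda$ is odd. This contradicts $\nu\equiv 2\bmod 4$, which forces $\nu$ to be even.
\item If $\kappa$ and $\lambda$ are both even, write $\kappa=2\kappa'$ and $\lambda=2\lambda'$. Then $\nu=4\kappa'\lambda'\equiv 0\bmod 4$, again contradicting $\nu\equiv 2\bmod 4$.
\end{itemize}
Both cases are impossible, so no such design exists.

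All the substance is already in Theorem \ref{thmnew}. The only point needing care is that the corollary must hold for \emph{every} factorisation $\nu=\kappa\lambda$. This is automatic: $\nu\equiv 2\bmod 4$ means $\nu$ has exactly one factor of $2$, so any factorisation has exactly one even factor. Hence $\kappa$ and $\lambda$ always have opposite parity, which is exactly what the theorem forbids.

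This includes the degenerate factorisations $\kappa=1$ or $\lambda=1$, since Theorem \ref{thmnew} places no restriction on $\kappa,\lambda$ beyond $\nu=\kappa\lambda$. I expect no real obstacle here.
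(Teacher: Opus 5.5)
Your proof is correct and uses the same argument as the paper: Theorem~\ref{thmnew} gives $\kappa\equiv\lambda \bmod 2$, while $\nu\equiv 2 \bmod 4$ forces exactly one of $\kappa,\lambda$ to be even. Your explicit case split also covers the both-odd case, which the paper's one-line proof (``both $\kappa$ and $\lambda$ can not be even'') leaves implicit.
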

 \begin{proof}
   Suppose if there exists a $(\nu, \kappa, \lambda)$-$\MBPD$ with $\nu \equiv 2 \bmod{4}$, then $\nu=4k+2=\kappa\lambda$. But both $\kappa$ and  $\lambda$ can not be even. \end{proof}
\begin{theorem}{\label{x}}
    If $(\nu, \kappa, \lambda)$-$\MBPD$ exists, then\[\sum_{x\in X}x\equiv \begin{cases}
        \frac{\kappa\lambda}{2}\bmod{\kappa\lambda} & \textnormal{~if~} \nu \textnormal{~is even},\\
        0\bmod{\kappa\lambda} & \textnormal{~if~} \nu \textnormal{~is odd}.
    \end{cases}\]
\end{theorem}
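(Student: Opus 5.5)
The plan is to reuse the computation from the proof of Theorem \ref{thmnew}. By Proposition \ref{lemma1}, $\sum_{x\in X}x=\mathcal{A}_{\mathrm{sum}}$. Condition $(i)$ of Definition \ref{def1} then gives
\[
\sum_{x\in X}x\equiv \sum_{i=1}^{\kappa} i\lambda=\frac{\kappa\lambda(\kappa+1)}{2}=\frac{\nu(\kappa+1)}{2}\pmod{\nu}.
\]
By symmetry, condition $(ii)$ gives $\sum_{x\in X}x\equiv \nu(\lambda+1)/2 \pmod{\nu}$. Theorem \ref{thmnew} tells us that $\kappa\equiv\lambda \pmod 2$, so these two expressions agree modulo $\nu$, and it is enough to evaluate one of them.

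The argument then splits by the parity of $\nu$.

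\emph{Case $\nu$ odd.} Here $\kappa$ and $\lambda$ are both odd, so $(\kappa+1)/2$ is an integer. Hence $\nu(\kappa+1)/2$ is an integer multiple of $\nu$, and the sum is $\equiv 0\pmod{\nu}$.

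\emph{Case $\nu$ even.} By Theorem \ref{thmnew} (equivalently, Corollary \ref{c}), $\kappa$ and $\lambda$ cannot have opposite parity, so both are even. Then $\kappa+1$ is odd, say $\kappa+1=2t+1$, and
\[
\frac{\nu(\kappa+1)}{2}=t\nu+\frac{\nu}{2}\equiv \frac{\nu}{2}=\frac{\kappa\lambda}{2}\pmod{\nu}.
\]

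The only point needing care is the even case. There one must invoke Theorem \ref{thmnew} to exclude the mixed-parity situation: if $\kappa$ were odd, $\nu(\kappa+1)/2$ would instead be $\equiv 0$. Once both $\kappa$ and $\lambda$ are known to be even, the reduction above is immediate.
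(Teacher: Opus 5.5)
Your proposal is correct and follows the paper's route. Like the paper, you reduce $\sum_{x\in X}x$ to $\sum_{i=1}^{\kappa} i\lambda \bmod \nu$ via Proposition~\ref{lemma1}, and invoke Theorem~\ref{thmnew} to force $\kappa$ and $\lambda$ both even when $\nu$ is even; the only difference is cosmetic, since the paper evaluates the sum by pairing $i\lambda$ with $(\kappa-i)\lambda\equiv -i\lambda$, whereas you reduce the closed form $\nu(\kappa+1)/2$ directly, which is slightly cleaner.
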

\begin{proof}
    Assume that there exists a modular balanced partition design $(\nu, \kappa, \lambda)$-$\MBPD$. Let $X$ be a set of integers with cardinality $\nu$. By Proposition \ref{lemma1}, we have, 
    \begin{align}
       \sum_{x\in X}x & = \mathcal{A}_{\mathrm{sum}}\equiv(\lambda+2\lambda+\cdots+(\kappa-1)\lambda+\kappa\lambda)\bmod{\kappa\lambda},~~\textnormal{~and~}\\   
         & = \mathcal{B}_{\mathrm{sum}}\equiv (\kappa+2\kappa+\cdots+(\lambda-1)\kappa+\lambda\kappa)\bmod{\kappa\lambda}. \end{align}
    \noindent Case (i). If $\nu$ is even, then by Theorem \ref{thmnew}, both $\kappa$ and $\lambda$ are even. Therefore, 
    \begin{align*}
        \mathcal{A}_{\mathrm{sum}}& \equiv \bigl(\lambda-\lambda+2\lambda-2\lambda+\cdots+ \bigl(\kappa/2-1\bigr)\lambda-\bigl(\kappa/2-1\bigr)\lambda+\bigl(\kappa/2\bigr)\lambda\bigr)\bmod{\kappa\lambda}\equiv \frac{\kappa\lambda}{2}\bmod{\kappa\lambda}, \textnormal{~and~}\\
 \mathcal{B}_{\mathrm{sum}}& \equiv \bigl(\kappa-\kappa+2\kappa-2\kappa+\cdots+ \left(\lambda/2-1\right)\kappa-\left(\lambda/2-1\right)\kappa+\left(\lambda/2\right)\kappa\bigr)\bmod{\lambda\kappa}\equiv \frac{\kappa\lambda}{2}\bmod{\kappa\lambda}.
 \end{align*}
 \noindent Case (ii). If $\nu$ is odd, then by Theorem \ref{thmnew}, both $\kappa$ and $\lambda$ are odd. Therefore, 
 \begin{align*}
 \mathcal{A}_{\mathrm{sum}}& \equiv \bigl(\lambda-\lambda+2\lambda-2\lambda+\cdots+ \frac{\kappa+1}{2}\lambda-\frac{\kappa+1}{2}\lambda\bigr)\bmod{\lambda\kappa}\equiv 0\bmod{\kappa\lambda}, \textnormal{~and~} \\
\mathcal{B}_{\mathrm{sum}}& \equiv \bigl(\kappa-\kappa+2\kappa-2\kappa+\cdots+ \frac{\lambda+1}{2}\kappa-\frac{\lambda+1}{2}\kappa\bigr)\bmod{\lambda\kappa}\equiv 0\bmod{\kappa\lambda}       
    \end{align*}
    This completes the proof. 
\end{proof}
%------------------
\begin{corollary}
    If $(\nu,\kappa,\lambda)$-\MBPD ~exists with $\nu\equiv 0 \bmod 4$, then $\displaystyle\sum_{x\in X}x\equiv 0\bmod 4$.
\end{corollary}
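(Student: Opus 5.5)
The plan is to derive the corollary directly from Theorem \ref{x}, using Theorem \ref{thmnew} to pin down the parities of $\kappa$ and $\lambda$.

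First, assume a $(\nu,\kappa,\lambda)$-$\MBPD$ exists with $\nu\equiv 0 \bmod 4$. Then $\nu$ is even, so Theorem \ref{thmnew} forces $\kappa$ and $\lambda$ to be both even. Write $\kappa=2\kappa'$ and $\lambda=2\lambda'$, so that $\nu=4\kappa'\lambda'$ and $\nu/2=2\kappa'\lambda'$.

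Second, Case (i) of Theorem \ref{x} gives
\[
\sum_{x\in X}x=\frac{\nu}{2}+t\nu \quad\text{for some integer } t.
\]
Since $4\mid\nu$, the term $t\nu$ vanishes modulo $4$. Hence
\[
\sum_{x\in X}x\equiv \frac{\nu}{2}=2\kappa'\lambda' \bmod 4.
\]

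Third, it remains to show $2\kappa'\lambda'\equiv 0 \bmod 4$, that is, that $\kappa'\lambda'$ is even. Equivalently, one of $\kappa,\lambda$ must be divisible by $4$, or $\nu\equiv 0\bmod 8$. I expect this to be the main obstacle. The first attempt would be to extract extra $2$-adic information from the defining congruences $\sum_{a\in A_i}a\equiv i\lambda$ and $\sum_{b\in B_j}b\equiv j\kappa \pmod\nu$. The idea is to sum only over the odd-indexed blocks of $\mathcal{A}$ and compare with the corresponding sum over the blocks of $\mathcal{B}$, using condition (iii), since each $A_i$ meets every $B_j$ exactly once.

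However, this approach is likely to fail, and the third step cannot be completed in general. In the smallest case $\kappa=\lambda=2$, $\nu=4$, Theorem \ref{x} itself gives $\sum_{x\in X}x\equiv 2 \bmod 4$. For instance, $X=\mathbb{Z}_4$ with rows $\{3,1\},\{0,2\}$ and columns $\{3,0\},\{1,2\}$ is a valid $(4,2,2)$-$\MBPD$ with total sum $6\equiv 2\bmod 4$. So the argument above establishes exactly the following:
\[
\sum_{x\in X}x\equiv \frac{\nu}{2}\bmod 4,
\]
which equals $0 \bmod 4$ precisely when $8\mid\nu$.

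In writing the proof, I would carry out the first two steps as above and then state clearly that the conclusion $\sum_{x\in X}x\equiv 0\bmod 4$ holds exactly when $\kappa'\lambda'$ is even. The counterexample $(4,2,2)$ shows that the statement as worded needs that extra hypothesis.
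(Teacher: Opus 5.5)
Your first two steps follow the paper's route exactly: Theorem~\ref{thmnew} together with Case (i) of Theorem~\ref{x} gives $\sum_{x\in X}x\equiv \nu/2 \pmod 4$. Your diagnosis of the third step is also correct. The paper closes it by asserting that $\frac{\kappa\lambda}{2}\equiv 0 \bmod 2$ implies $\frac{\kappa\lambda}{2}\equiv 0 \bmod 4$, which is a \emph{non sequitur}. What actually follows is $\sum_{x\in X}x\equiv 2 \pmod 4$ for every $\MBPD$ with $\nu\equiv 4\pmod 8$. So the corollary is false as soon as one such design exists, and it holds only under the extra hypothesis $8\mid\nu$, as you say.

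The real gap is your witness. In the array with rows $\{3,1\},\{0,2\}$ and columns $\{3,0\},\{1,2\}$, both column sums equal $3$. That is neither $2$ nor $0 \bmod 4$, and the two column sums would also have to be distinct. No repair is possible inside $\mathbb{Z}_4$: row sums in $\{0,2\}$ force the rows to be $\{0,2\}$ and $\{1,3\}$. Then every column has one even and one odd entry, so every column sum is odd.

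Since $X$ need only be a set of integers, a valid $(4,2,2)$-\MBPD{} is $X=\{0,2,4,8\}$ with $A_1=\{0,2\}$, $A_2=\{4,8\}$, $B_1=\{2,8\}$, $B_2=\{0,4\}$. The block sums are $2,12,10,4$, all four intersections are singletons, and the total is $14\equiv 2\pmod 4$. Even more directly, the paper's own $(36,6,6)$-\MBPD{} over $\mathbb{Z}_{36}$ in Figure~\ref{fig:7.1} is a valid design: its row and column sums are $6,12,18,24,30,0$. It has $\nu=36\equiv 0 \pmod 4$ but $\sum_{x\in X}x=630\equiv 2\pmod 4$. With either witness in place of yours, your proposal correctly refutes the statement and gives its correct form.
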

\begin{proof}
    Suppose that there exists a $(\nu, \kappa, \lambda)$-$\MBPD$  with $\nu \equiv 0 \bmod{4}$. Then $\kappa\lambda \equiv 0 \bmod{2}$ and $\frac{\kappa\lambda}{2}\equiv 0 \bmod{2}$, which implies $\frac{\kappa\lambda}{2}\equiv 0 \bmod{4}$ and consequently, $\displaystyle\sum_{x\in X}x\equiv 0 \bmod{4}$. 
\end{proof}
%-----------------
\begin{corollary}
    If $X=\{\pm x : x\in \mathbb{Z}\setminus\{0\}\}$, then there is no $(\nu, \kappa, \lambda)$-\MBPD. 
\end{corollary}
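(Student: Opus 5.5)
The plan is to read the hypothesis as saying that $X$ is a finite set of nonzero integers closed under negation: whenever $x\in X$, also $-x\in X$, and $0\notin X$. The natural tool is Theorem \ref{x}, which pins down $\sum_{x\in X}x$ modulo $\nu=\kappa\lambda$. We then compare that value with what the symmetry of $X$ forces.

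First, since $X$ is symmetric and does not contain $0$, its elements split into disjoint pairs $\{x,-x\}$ with $x\neq -x$. Two consequences follow directly:
\[
\sum_{x\in X}x=0, \qquad \nu=|X| \text{ is even}.
\]

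Next, suppose for contradiction that a $(\nu,\kappa,\lambda)$-$\MBPD$ on $X$ exists. Because $\nu$ is even, the even case of Theorem \ref{x} gives
\[
\sum_{x\in X}x\equiv \frac{\kappa\lambda}{2}\bmod{\kappa\lambda}.
\]
Combining this with the first step yields $0\equiv \nu/2 \bmod{\nu}$. This is impossible for $\nu\geq 2$, since $0<\nu/2<\nu$, so no such design exists.

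The main obstacle is interpretive rather than computational. Taken literally, the set $\{\pm x : x\in\mathbb{Z}\setminus\{0\}\}$ is infinite, whereas the definition of an $\MBPD$ requires $X$ to be finite. I would therefore state explicitly that the corollary concerns finite sets $X\subset\mathbb{Z}\setminus\{0\}$ with $X=-X$. The essential points are the fixed-point-free pairing $x\leftrightarrow -x$, which needs $0\notin X$, and the resulting parity of $\nu$; after that, Theorem \ref{x} completes the argument.
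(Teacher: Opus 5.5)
Your proposal is correct and matches the paper's approach: the paper states this corollary without proof right after Theorem~\ref{x}, and the intended argument is yours, namely that a finite $X\subset\mathbb{Z}\setminus\{0\}$ with $X=-X$ has $\sum_{x\in X}x=0$ and $\nu$ even, which contradicts $\sum_{x\in X}x\equiv \nu/2 \bmod{\nu}$. Your remark that the hypothesis must be read as a finite symmetric set, since the literal set $\mathbb{Z}\setminus\{0\}$ is infinite, is a needed clarification of the statement and does not change the route.
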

%-----------------

\begin{theorem}
Suppose $(X, \mathcal{A}, \mathcal{B})$ is a $(\nu,\kappa,\lambda)$-\MBPD.  
For any $x\in \mathbb{Z}\setminus\{0\}$, let $X' = \{\,x+y : y\in X\,\}, \mathcal{A}' =\{A_i': i \in [1, \kappa]\}$ and
$\mathcal{B}' = \{B_j': j \in [1, \lambda]\}$, where  $A'_i = \{x+y : y\in A_i\}$ and  $B'_j = \{x+y : y\in B_j\}$. Then $(X', \mathcal{A}', \mathcal{B}')$ is also a $(\nu,\kappa,\lambda)\text{-}\MBPD$.
\end{theorem}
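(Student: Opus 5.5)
The plan is to verify the three conditions of Definition~\ref{def1} for $(X',\mathcal{A}',\mathcal{B}')$ directly. Everything rests on the translation map $\tau_x\colon y\mapsto x+y$, which is a bijection from $X$ onto $X'$; in particular $|X'|=\nu=\kappa\lambda$.

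\textbf{Combinatorial conditions.} Since $\tau_x$ is injective, it carries partitions to partitions and preserves block sizes and intersections. Hence $\mathcal{A}'$ is a partition of $X'$ into $\kappa$ blocks of size $\lambda$, and $\mathcal{B}'$ is a partition of $X'$ into $\lambda$ blocks of size $\kappa$. Moreover
\[
A'_i\cap B'_j=\tau_x(A_i\cap B_j),
\]
which is a singleton. So condition $(iii)$ holds immediately.

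\textbf{Sum conditions.} This is the only real content. Each $A_i$ has $\lambda$ elements, so
\[
\sum_{a\in A'_i}a=\sum_{a\in A_i}a+\lambda x\equiv (i+x)\lambda \pmod{\kappa\lambda}.
\]
In the same way,
\[
\sum_{b\in B'_j}b\equiv (j+x)\kappa \pmod{\kappa\lambda}.
\]
The sums are therefore not literally $i\lambda$ and $j\kappa$ under the inherited labels. This labelling issue is the main obstacle, and I would handle it by relabelling. Because $(i+x)\lambda \bmod \kappa\lambda$ depends only on $(i+x)\bmod\kappa$, the map $i\mapsto i'$, where $i'\in[1,\kappa]$ and $i'\equiv i+x \pmod{\kappa}$, is a permutation of $[1,\kappa]$. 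The block $A'_i$ then has sum $\equiv i'\lambda \pmod{\nu}$. Renaming $A'_i$ as the $i'$-th block of $\mathcal{A}'$ gives exactly condition $(i)$.

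The same argument with $j\mapsto j'$, where $j'\equiv j+x \pmod{\lambda}$, gives condition $(ii)$ for $\mathcal{B}'$. Relabelling does not affect $(iii)$, since that condition concerns all pairs of blocks.

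\textbf{Presentation.} I would note explicitly that $\mathcal{A}'$ and $\mathcal{B}'$ are sets of blocks, so the indexing in Definition~\ref{def1} may be chosen freely. The statement should thus be read as: the translated families admit an indexing satisfying $(i)$ and $(ii)$. In the special case $x\equiv 0 \pmod{\kappa}$ and $x\equiv 0 \pmod{\lambda}$, no relabelling is needed.

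As a consistency check, the shift changes $\sum_{x\in X}x$ by $\nu x\equiv 0 \pmod{\nu}$. This agrees with Theorem~\ref{x}.
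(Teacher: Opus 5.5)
Your proof is correct and follows the same route as the paper. Translation by $x$ preserves the partitions and the intersections. It shifts every $\mathcal{A}$-block sum by $\lambda x$ and every $\mathcal{B}$-block sum by $\kappa x$, which only permutes the residues $i\lambda$ (resp.\ $j\kappa$) modulo $\nu$. Your explicit re-indexing $i\mapsto i'\equiv i+x \pmod{\kappa}$ is cleaner than the paper's write-up. The paper asserts $\lambda(x+i)\equiv \lambda i \pmod{\nu}$ for the inherited labels, which is false unless $\kappa\mid x$, and only implicitly uses that the statement holds up to relabelling of the blocks.
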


\begin{proof}Since each element of $X$ is shifted by adding the same integer $x$.  Therefore, 
\(
|X'| = |X| = \nu.
\)
Also, 
for any block $A\in \mathcal{A}$, the shifted block $A'=\{x+y : y\in A\}$ has the same size as $A$.  
Thus, each block in $\mathcal{A}'$ has the same size as in $\mathcal{A}$, and similarly for $\mathcal{B}'$. Moreover, the
disjoint blocks in $\mathcal{A}$ and $\mathcal{B}$ are remain disjoint after adding the same integer to all elements. Therefore, 
the union of all shifted block is in $\mathcal{A}$ and $\mathcal{B}$ are $\{x+y : y\in X\} = X'$. 
If $y_1,y_2\in X$ occur together in certain blocks, then after shifting, the pair $(x+y_1, x+y_2)$ occurs in the corresponding shifted blocks.  
Hence, the pairwise balance condition is preserved. Therefore, for $\mathcal{A}'=\{A_i': i \in [1, \kappa]\}$, we have \begin{align*}
    \sum_{a\in A_{i}'}a & = \lambda x \bmod{\nu}+i\lambda \bmod{\nu}=\lambda(x+i)\bmod{\nu} =\lambda i \bmod \nu.
\end{align*}
since adding $\lambda x$ modulo $\nu$ permutes the set of multiples of $\lambda$. Then, $\Omega_{\mathcal{A}'}=\{\lambda i \bmod \nu: i \in [1, \kappa]\}$. 
Analogously for $\mathcal{B}'$,  \begin{align*}
    \sum_{b\in B_{j}'}b & = \kappa x \bmod{\nu}+j\kappa \bmod{\nu}=\kappa(x+j)\bmod{\nu} = \kappa j\bmod{\nu}.
\end{align*}
Also, $\Omega_{\mathcal{B}'}=\{\kappa j\bmod{\nu}: j \in [1, \lambda]\}$. It is easy to check that $\Omega_{\mathcal{A}'}$ and $\Omega_{\mathcal{B}'}$ are subgroups of $\mathbb{Z}_{\nu}$ with
$|\Omega_{\mathcal{A}'}|=\kappa$ and $|\Omega_{\mathcal{B}'}|=\lambda$.
Therefore,  $(X', \mathcal{A}', \mathcal{B}')$ is also a $(\nu,\kappa,\lambda)$-MBPD.
\end{proof}
\begin{corollary}
    If there exists a $(\nu, \kappa, \lambda)$-$\MBPD$ over $X$, then there exists a  $(\nu, \kappa, \lambda)$-$\MBPD$ over $X'$, where $X'=\{-x: x\in X\}$.
\end{corollary}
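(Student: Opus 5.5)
The plan is to apply the negation map $\phi\colon X\to X'$, $\phi(y)=-y$, to every block, and then re-index the blocks so that the prescribed residues in Definition \ref{def1} are met again. The relabelling is needed because negation reverses the order of the block sums within the cyclic subgroup they form, so a plain negation does not keep each block at the right position.

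First, $\phi$ is a bijection because it is injective on integers, so $|X'|=\nu$. The images $\phi(A_i)$ still partition $X'$ into $\kappa$ blocks of size $\lambda$, and the $\phi(B_j)$ partition $X'$ into $\lambda$ blocks of size $\kappa$. Moreover, $|\phi(A_i)\cap\phi(B_j)|=|\phi(A_i\cap B_j)|=1$, so condition $(iii)$ survives. Next I compute the sums:
\[
\sum_{a\in \phi(A_i)} a=-\sum_{a\in A_i}a\equiv -i\lambda\equiv (\kappa-i)\lambda \pmod{\nu},
\]
using $\kappa\lambda=\nu$. Similarly, $\sum_{b\in\phi(B_j)}b\equiv(\lambda-j)\kappa \pmod{\nu}$.

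So I define
\[
A'_i=\phi(A_{\kappa-i}) \text{ for } i\in[1,\kappa-1], \qquad A'_\kappa=\phi(A_\kappa).
\]
For $i\in[1,\kappa-1]$, the sum of $A'_i$ is $\equiv(\kappa-(\kappa-i))\lambda=i\lambda$. For $i=\kappa$, the sum is $\equiv -\kappa\lambda\equiv 0\equiv\kappa\lambda \pmod{\nu}$. The map $i\mapsto\kappa-i$ is a permutation of $[1,\kappa-1]$, so $\mathcal{A}'=\{A'_i\}$ is a re-indexing of $\{\phi(A_i)\}$ and remains a partition. The same re-indexing with $\lambda$ and $\kappa$ interchanged defines $\mathcal{B}'$, and it satisfies condition $(ii)$. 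Condition $(iii)$ is unaffected by re-indexing, so $(X',\mathcal{A}',\mathcal{B}')$ is a $(\nu,\kappa,\lambda)$-$\MBPD$.

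The only step requiring care is the re-indexing, in particular the boundary block $i=\kappa$ (respectively $j=\lambda$), whose sum is $0$ modulo $\nu$ and is therefore fixed by negation. Everything else is routine bookkeeping.
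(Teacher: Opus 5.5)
Your proof is correct and is essentially the argument the paper intends. The paper states this corollary without proof right after the translation theorem, but negation is not a translation, so what is actually needed is your blockwise argument with $y\mapsto -y$ (the same computation as in Theorem~\ref{thm5.10} with $c=-1$, $d=0$). Your explicit re-indexing, $A'_i=\phi(A_{\kappa-i})$ for $i\in[1,\kappa-1]$ and $A'_\kappa=\phi(A_\kappa)$ (and likewise for $\mathcal{B}$), makes rigorous a step the paper only asserts in words: that the block sums are merely permuted among the multiples of $\lambda$ (resp.\ $\kappa$) modulo $\nu$.
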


% \begin{theorem}
%     Suppose $X$ and $\mathcal{Y}$ are two disjoint sets with the same cardinality $\nu$. If there exists $(\nu, \kappa, \lambda)$-\MBPD~ over $X$ and $\mathcal{Y}$, then there exists a $(2\nu, 2\kappa, \lambda)$-\MBPD~over $X\cup \mathcal{Y}$.
% \end{theorem}
% \begin{proof}
    
% \end{proof}
\begin{theorem}Let $X=\mathbb{Z}_{\nu}$. If there exists a $(\nu,\kappa,\lambda)$-\MBPD, then for every $\alpha\in\mathbb{Z}^{+}$, there exists an $(\alpha\nu,\kappa,\alpha\lambda)$-\MBPD.
\end{theorem}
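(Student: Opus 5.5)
The plan is a direct ``blow-up'' of the given design. Write $\mathcal{A}=\{A_1,\dots,A_\kappa\}$ and $\mathcal{B}=\{B_1,\dots,B_\lambda\}$. Let $a_{ij}$ be the unique element of $A_i\cap B_j$, viewed as an integer in $[0,\nu-1]$. Every element of $\mathbb{Z}_{\alpha\nu}$ is uniquely of the form $x+t\nu$ with $x\in[0,\nu-1]$ and $t\in[0,\alpha-1]$. The new design on $\mathbb{Z}_{\alpha\nu}$ is viewed as a $\kappa\times\alpha\lambda$ grid, as in Figure~\ref{fig:7.1}. Its columns are indexed by pairs $(j,t)$ with $j\in[1,\lambda]$ and $t\in[0,\alpha-1]$. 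The cell $(i,(j,t))$ is filled with
\[
a_{ij}+\pi_{i,j}(t)\,\nu ,
\]
where each $\pi_{i,j}$ is a permutation of $\mathbb{Z}_\alpha$ to be chosen.

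Because each $\pi_{i,j}$ is a bijection, every element of $\mathbb{Z}_{\alpha\nu}$ occurs exactly once. Hence the rows give a partition $\mathcal{A}'$ into $\kappa$ blocks of size $\alpha\lambda$, the columns give a partition $\mathcal{B}'$ into $\alpha\lambda$ blocks of size $\kappa$, and condition $(iii)$ of Definition~\ref{def1} holds automatically. The work is entirely in the two modular sum conditions.

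\emph{Row sums.} Write $\sum_{a\in A_i}a=i\lambda+r_i\nu$. Row $i$ then sums to
\[
\alpha i\lambda+\alpha r_i\nu+\lambda\nu\,\tfrac{\alpha(\alpha-1)}{2}\pmod{\alpha\nu}.
\]
The middle term vanishes modulo $\alpha\nu$, so the row sums are $i(\alpha\lambda)$ as required exactly when the last term $\lambda\nu\alpha(\alpha-1)/2$ is $0$ modulo $\alpha\nu$, that is, when $\lambda(\alpha-1)$ is even. This is precisely the parity compatibility $\kappa\equiv\alpha\lambda \pmod 2$ demanded by Theorem~\ref{thmnew} for the target parameters, given $\kappa\equiv\lambda\pmod 2$. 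I will carry it as the standing situation in which the construction operates; it is automatic when $\lambda$ is even or $\alpha$ is odd.

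\emph{Column sums.} Write $\sum_{b\in B_j}b=j\kappa+s_j\nu$. Column $(j,t)$ then sums to
\[
\kappa\bigl(j+\lambda(s_j+c_{j,t})\bigr)\pmod{\alpha\nu},\qquad c_{j,t}=\sum_{i=1}^{\kappa}\pi_{i,j}(t),
\]
using $\nu=\kappa\lambda$. As $(j,t)$ ranges, these must realise every multiple $j'\kappa$ with $j'\in[1,\alpha\lambda]$. Since $j+\lambda m$ for $j\in[1,\lambda]$ and $m\in\mathbb{Z}_\alpha$ runs over $\mathbb{Z}_{\alpha\lambda}$ exactly once, it suffices that, for each fixed $j$, the map $t\mapsto c_{j,t}$ is a permutation of $\mathbb{Z}_\alpha$. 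The shift by $s_j$ is then harmless.

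So the key step, and the expected main obstacle, is to find $\kappa$ permutations $\pi_1,\dots,\pi_\kappa$ of $\mathbb{Z}_\alpha$ whose pointwise sum is again a permutation; the same family can be used for every $j$. My first choice is $\pi_1=\cdots=\pi_{\kappa-1}=\mathrm{id}$ and $\pi_\kappa(t)=\mu t$, giving $c_t=(\kappa-1+\mu)t$. One then needs $\mu$ with $\gcd(\mu,\alpha)=\gcd(\kappa-1+\mu,\alpha)=1$. If that fails, I would use the orthomorphism idea instead: pair up permutations $t\mapsto t$ and $t\mapsto -t$, which sum to $0$, and let the remaining one or three permutations carry the sum. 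For three permutations one takes $t,t,-t$ when $\alpha$ is odd, or a complete mapping of $\mathbb{Z}_\alpha$ when $\alpha$ is odd. The parity conditions under which such families exist are exactly those that also make the row computation work. Checking this case split carefully against $\kappa\equiv\alpha\lambda\pmod 2$ is where I expect the real difficulty of the proof to lie.
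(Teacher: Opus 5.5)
\textbf{The key step fails for every even $\alpha$, and the statement itself is false in part of that range.}

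Your reduction is sound, and it refines the paper's argument: the paper's proof is the special case $\pi_{i,j}=\mathrm{id}$ for all $i,j$, so that $c_{j,t}=\kappa t$. The gap is your closing claim that the parity conditions under which the permutation family exists are ``exactly those that also make the row computation work''. In fact the two conditions are mutually exclusive whenever $\alpha$ is even.

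In your framework, bijectivity of $t\mapsto c_{j,t}$ modulo $\alpha$ is necessary, not merely sufficient. The $\alpha\lambda$ column sums must be pairwise distinct, and two columns $(j,t),(j,t')$ with the same $j$ have equal sums unless $c_{j,t}\not\equiv c_{j,t'}\pmod{\alpha}$. Summing over $t$ gives $\sum_t c_{j,t}=\kappa\,\alpha(\alpha-1)/2$, whereas a complete residue system modulo $\alpha$ sums to $\alpha(\alpha-1)/2$. So you need $(\kappa-1)\alpha(\alpha-1)/2\equiv 0\pmod{\alpha}$, i.e.\ $(\kappa-1)(\alpha-1)$ even.

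For even $\alpha$ this forces $\kappa$ odd. Your row condition, however, forces $\lambda$ even when $\alpha$ is even, and hence $\kappa$ even by Theorem~\ref{thmnew} applied to the given design. So no choice of the $\pi_{i,j}$ works for any even $\alpha$, whether orthomorphisms or anything else. (Also, $\mathbb{Z}_\alpha$ has no complete mapping when $\alpha$ is even, and the family $t,t,-t$ only helps when $\kappa$ is odd, which is exactly when the row sums break.) For instance, your construction cannot pass from a $(16,4,4)$-\MBPD{} to a $(32,4,8)$-\MBPD{}.

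You should also say plainly that the theorem is false as stated, rather than carrying ``$\lambda(\alpha-1)$ even'' as a standing situation. A $(9,3,3)$-\MBPD{} over $\mathbb{Z}_9$ exists, but an $(18,3,6)$-\MBPD{} is ruled out by Theorem~\ref{thmnew} (equivalently Corollary~\ref{c}, since $18\equiv 2\pmod 4$).

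What your construction does prove is the case of odd $\alpha$, once you close the $\mu$-step, which you left open. Every prime $p\mid\alpha$ satisfies $p\ge 3$, so for each such $p$ you can choose $\mu\not\equiv 0,\ 1-\kappa\pmod p$. Combining these by the Chinese remainder theorem makes both $\mu$ and $\kappa-1+\mu$ units modulo $\alpha$.

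This odd-$\alpha$ result is already more than the paper's own proof delivers. Its choice $c_{j,t}=\kappa t$ implicitly requires $\gcd(\kappa,\alpha)=1$: for $\kappa=\lambda=\alpha=3$, the blocks $B'_{j,0},B'_{j,1},B'_{j,2}$ all have the same sum modulo $27$. Its row computation also silently drops the term $\lambda\nu\,\alpha(\alpha-1)/2$, which is exactly the parity obstruction you noticed.

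The remaining case, $\alpha$ even with $\kappa,\lambda$ even, would need a construction not of the form $a_{ij}+\pi_{i,j}(t)\nu$. Otherwise the statement should be restricted to odd $\alpha$.
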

\begin{proof} 
Assume that $(X,\mathcal{A},\mathcal{B})$ is a $(\nu,\kappa,\lambda)$-\MBPD, where
\(X=\mathbb{Z}_{\nu}\).
We shall construct a $(\alpha\nu,\kappa,\alpha\lambda)$-\MBPD   ~over
\(X'=\mathbb{Z}_{\alpha\nu}\) for an arbitrary
\(\alpha\in\mathbb{Z}^{+}\). Let
\(
\mathcal{A}=\{A_i:i\in[1,\kappa]\}
~\text{and}~
\mathcal{B}=\{B_j:j\in[1,\lambda]\}.
\)
\noindent For each \(i\in[1,\kappa]\), let
\(
A_i'
=
\bigl\{
t\nu+y \pmod{\alpha\nu}
:\,
y\in A_i,\;
t\in[0,\alpha-1]
\bigr\},
\)
and set
\(
\mathcal{A}'
=
\{A_i':i\in[1,\kappa]\}.
\)
Similarly, for each \(j\in[1,\lambda]\) and \(t\in[0,\alpha-1]\), define
\(
B_{j,t}'
=
\bigl\{
t\nu+y \pmod{\alpha\nu}
:\,
y\in B_j
\bigr\},
\)
and let
\(
\mathcal{B}'
=
\{B_{j,t}':(j,t)\in[1,\lambda]\times[0,\alpha-1]\}.
\)
Since \(|A_i|=\lambda\) and \(|B_j|=\kappa\), it follows that
\(
|A_i'|=\alpha\lambda
~\text{for all }i\in[1,\kappa],
\)
and
\(
|B_{j,t}'|=\kappa
~\text{for all }(j,t)\in[1,\lambda]\times[0,\alpha-1].
\)
\noindent Now, we need to verify the modular conditions for both the sets in $\mathcal{A}'$ and $\mathcal{B}'$. 
 That is, \begin{align*}
    \sum_{a\in A_{i}'}a & = \lambda\nu \bmod{\alpha\nu}+\lambda 2 \nu\bmod{\alpha\nu}+ \lambda(\alpha-1)\bmod{\alpha\nu + \lambda i \alpha\bmod{\nu}}\\
     & = \lambda\sum_{t=0}^{\alpha-1}t\nu \bmod{\alpha\nu}+\lambda i \alpha\bmod{\nu}\\
     & = \lambda\nu\frac{\alpha(\alpha-1)}{2}\bmod{\alpha \nu} + \lambda i \alpha\bmod{\nu} = \lambda i \alpha \bmod{\nu}\equiv \lambda i \alpha \bmod{\alpha \nu}. 
     \end{align*}
\noindent Therefore,
\(
\Omega_{\mathcal{A}'}
=
\{\alpha\lambda i \pmod{\alpha\nu}: i\in[1,\kappa]\}.
\)
Since
\(
\Omega_{\mathcal{A}}
=
\{\lambda i \pmod{\nu}: i\in[1,\kappa]\},
\)
and the elements \(\lambda,2\lambda,\ldots,\kappa\lambda\) are pairwise distinct modulo \(\nu\), it follows that
\(
\alpha\lambda,\;2\alpha\lambda,\;\ldots,\;\kappa\alpha\lambda
\)
are pairwise distinct modulo \(\alpha\nu\). Consequently,
\(\Omega_{\mathcal{A}'}\) contains exactly \(\kappa\) elements. Moreover, as it is generated by \(\alpha\lambda\) in \(\mathbb{Z}_{\alpha\nu}\), \(\Omega_{\mathcal{A}'}\) forms a subgroup of \(\mathbb{Z}_{\alpha\nu}\) of order \(\kappa\). Now, for the set $\mathcal{B}',$
\begin{align*}
    \sum_{b\in B_{j,t}'}b& = \kappa t \nu \bmod{\alpha\nu}+\kappa j\bmod{\nu} = \kappa^2t{\lambda} \bmod{\alpha\nu}+\kappa j \bmod{\nu} = \kappa j'\bmod\alpha{\nu}.
\end{align*} Therefore, $\Omega_{\mathcal{B}'}=\{\kappa j'\bmod\alpha{\nu}:j'\in[1, \alpha\lambda]$, since the elements
\(
\kappa,\;2\kappa,\;\ldots,\;\alpha\lambda\kappa
\)
are distinct modulo \(\alpha\nu\), it follows that \(\Omega_{\mathcal{B}'}\) contains exactly \(\alpha\lambda\) elements. Hence, \(\Omega_{\mathcal{B}'}\) is a subgroup of \(\mathbb{Z}_{\alpha\nu}\) of order \(\alpha\lambda\)..  Therefore, the proof is complete. 
\end{proof}

\section{Construction of \MBPD~ via subgroup magic rectangle}
In this section, a modular balanced partition design is constructed by using a subgroup magic rectangle.
\begin{lemma}\label{lemmamain}
Let $X=\mathbb{Z}_\nu$ with $\nu=\kappa\lambda$. 
Then a $(\nu,\kappa,\lambda)$-modular balanced partition design 
$(X,\mathcal{A},\mathcal{B})$ exists if and only if  
a $\kappa\times \lambda$ subgroup magic rectangle over $\mathbb{Z}_\nu$ exists. 
\end{lemma}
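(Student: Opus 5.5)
The plan is to identify an $\MBPD$ with its grid picture, as in Figure \ref{fig:7.1}, and check that the defining conditions translate exactly into the defining conditions of an $\SMR_{\mathbb{Z}_\nu}(\kappa,\lambda:H_1,H_2)$. The two ingredients are Proposition \ref{thmim} and the uniqueness of subgroups of a given order in a cyclic group.

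\emph{($\Rightarrow$)} Let $(X,\mathcal{A},\mathcal{B})$ be a $(\nu,\kappa,\lambda)$-$\MBPD$ with $X=\mathbb{Z}_\nu$. Define a $\kappa\times\lambda$ array $\mathbb{S}=(\zeta_{ij})$ by letting $\zeta_{ij}$ be the unique element of $A_i\cap B_j$; this is well defined by condition (iii).
\begin{enumerate}
    \item[(a)] Row $i$ lists each element of $A_i$ exactly once. Indeed, each $a\in A_i$ lies in exactly one $B_j$ because $\mathcal{B}$ is a partition, and $|A_i|=\lambda$ matches the number of columns.
    \item[(b)] Since $\mathcal{A}$ partitions $X$, every element of $\mathbb{Z}_\nu$ appears exactly once in $\mathbb{S}$.
    \item[(c)] The sum of row $i$ is $\sum_{a\in A_i}a\equiv i\lambda$, and the sum of column $j$ is $\sum_{b\in B_j}b\equiv j\kappa$.
\end{enumerate}
By Proposition \ref{thmim}, the set of row sums $\Omega_{\mathcal{A}}=\langle\lambda\rangle$ is a subgroup of order $\kappa$, and the set of column sums $\Omega_{\mathcal{B}}=\langle\kappa\rangle$ is a subgroup of order $\lambda$. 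Hence $\mathbb{S}$ is an $\SMR_{\mathbb{Z}_\nu}(\kappa,\lambda:\langle\lambda\rangle,\langle\kappa\rangle)$.

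\emph{($\Leftarrow$)} Let $\mathbb{S}$ be a $\kappa\times\lambda$ subgroup magic rectangle over $\mathbb{Z}_\nu$. Its set of row sums $H_1$ is a subgroup of order $\kappa$, and its set of column sums $H_2$ is a subgroup of order $\lambda$.

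The first key step is that there are only $\kappa$ rows, yet the set of row sums has $\kappa$ elements. So the row sums are pairwise distinct, and likewise the column sums are pairwise distinct.

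The second key step is that $\mathbb{Z}_\nu$ has a unique subgroup of each order dividing $\nu$. Therefore
\[
H_1=\langle\lambda\rangle=\{i\lambda: i\in[1,\kappa]\}, \qquad H_2=\langle\kappa\rangle=\{j\kappa: j\in[1,\lambda]\}.
\]
Permute the rows so that row $i$ has sum $i\lambda \bmod \nu$. This does not change the column sums, since each column keeps the same entries. Then permute the columns so that column $j$ has sum $j\kappa \bmod \nu$.

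Now let $A_i$ be the set of entries of row $i$ and $B_j$ the set of entries of column $j$. Because each element of $\mathbb{Z}_\nu$ occurs exactly once in $\mathbb{S}$:
\begin{enumerate}
    \item[(a)] $\mathcal{A}$ is a partition of $X$ into $\kappa$ blocks of size $\lambda$;
    \item[(b)] $\mathcal{B}$ is a partition of $X$ into $\lambda$ blocks of size $\kappa$;
    \item[(c)] $A_i\cap B_j=\{\zeta_{ij}\}$, so condition (iii) holds.
\end{enumerate}
The congruences in (i) and (ii) hold by the choice of ordering.

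The only non-routine point is the converse. The subgroup magic rectangle gives only the \emph{set} of row and column sums, not a labelling of them. This is handled by the distinctness count together with the uniqueness of cyclic subgroups, which allow the sums to be matched to $i\lambda$ and $j\kappa$ after reordering.
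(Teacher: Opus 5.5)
Your proposal is correct and follows the paper's proof essentially step for step. In the forward direction you build the grid from $A_i\cap B_j$. In the converse you identify $H_1=\langle\lambda\rangle$ and $H_2=\langle\kappa\rangle$ by uniqueness of subgroups of a cyclic group, then re-index rows and columns. Your explicit observation that $\kappa$ rows with a $\kappa$-element set of row sums forces the row sums to be pairwise distinct (and likewise for the columns) justifies the re-indexing step, which the paper leaves implicit.
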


\begin{proof}
\smallskip
 % If an $\MBPD$ exists, then a $\SMR$ over $\mathbb{Z}_v$ exists}.
\par Suppose that $(X,\mathcal{A},\mathcal{B})$ is a 
$(\nu,\kappa,\lambda)$-$\MBPD$ with $X=\mathbb{Z}_\nu$.
By definition, $\mathcal{A}=\{A_i : i \in [1,\kappa]\}$ is a partition of 
$\mathbb{Z}_\nu$ into $\kappa$ blocks of size $\lambda$, and 
$\mathcal{B}=\{B_j: j \in [1, \lambda]\}$ is a partition into $\lambda$ blocks 
of size $\kappa$. Since condition (iii) guarantees that 
$|A_i\cap B_j|=1$ for $i$ and$j$, construct a 
$\kappa\times\lambda$ array, whose $(i,j)$-th entry is  $\zeta_{ij}$, where  
$A_i\cap B_j=\{\zeta_{ij}\}$. Note that each element $\zeta_{ij}$ of $\mathbb{Z}_\nu$ 
appears exactly once in this array. Now, 
\[
h_i=\sum_{j=1}^\lambda \zeta_{ij}
    = \sum_{x\in A_i} x \bmod{\nu}, 
\qquad 
h'_j=\sum_{i=1}^\kappa \zeta_{ij}
    = \sum_{x\in B_j} x \bmod{\nu}.
\]
From the MBPD conditions,
\[
\sum_{a\in A_i}a \equiv i\lambda \bmod{\nu}, \quad
\sum_{b\in B_j}b \equiv j\kappa \bmod{\nu}.
\]
Hence $h_i \equiv i\lambda \bmod{\nu}$ and 
$h'_j \equiv j\kappa \bmod{\nu}$. Therefore
\[
H_1=\{h_i: i \in [1,\kappa]\}
=\{\,i\lambda \bmod{\nu} : i\in [1,\kappa]\},
\]
\[
H_2=\{h'_j: j\in[1,\lambda]\}
=\{\,j\kappa \bmod{\nu} : j\in[1,\lambda]\}.
\]
For any $h_k,h_\ell \in H_1$, we have
$h_k-h_\ell \equiv (k-\ell)\lambda \bmod{\nu} \in H_1$, 
and similarly for $H_2$. Thus, the constructed array is an 
$\SMR_{\mathbb{Z}_\nu}(\kappa,\lambda:H_1,H_2)$. The forward direction is proved. 
 Conversely, assume we have an 
$\SMR_{\mathbb{Z}_\nu}(\kappa,\lambda:H_1,H_2)$, 
a $\kappa\times\lambda$ array containing each element of $\mathbb{Z}_\nu$ 
exactly once, with row-sums $H_1=\{h_i: i \in [1,\kappa]\}$ 
and column-sums $H_2=\{h'_j: j\in[1,\lambda]\}$.
The closure conditions imply that $H_1$ and $H_2$ 
are subgroups of $\mathbb{Z}_\nu$. Since $\nu=\kappa\lambda$ and 
$\mathbb{Z}_\nu$ is cyclic, these subgroups must be
\[
H_1 = \langle \lambda \rangle = 
\{\,t\lambda : t\in [0,\kappa-1]\,\}, \qquad
H_2 = \langle \kappa \rangle = 
\{\,s\kappa : s\in[0,\lambda-1]\,\}.
\]
After re-indexing rows and columns, let 
$h_i\equiv i\lambda \bmod{\nu}$ and 
$h'_j\equiv j\kappa \bmod{\nu}$. \\
For $i\in [1,\kappa]$ and $j\in[ 1, \lambda]$, define  $A_i$ as the set of entries in $i$-th row, and $B_j$ as the set of 
entries in $j$-th column. Then $\mathcal{A}=\{A_i: i\in [1,\kappa]\}$ partitions $\mathbb{Z}_\nu$ 
into $\kappa$ subsets of size $\lambda$, and $\mathcal{B}=\{B_j:j\in[ 1, \lambda]\}$ partitions 
$\mathbb{Z}_\nu$ into $\lambda$ subsets of size $\kappa$.  
Since each cell of the array contains exactly one 
element, $|A_i\cap B_j|=1$. Finally,
\[
\sum_{a\in A_i}a \equiv h_i \equiv i\lambda \bmod{\nu}, \textnormal{~and~}
\sum_{b\in B_j}b \equiv h'_j \equiv j\kappa \bmod{\nu}.
\]
Thus $(\mathbb{Z}_\nu,\mathcal{A},\mathcal{B})$ is a 
$(\nu,\kappa,\lambda)$-$\MBPD$.

\smallskip
\noindent Therefore, the two  combinatorial objects are equivalent.
\end{proof}

    \begin{theorem}{\label{thmfre}}
    If $\nu$ is odd, then there exists a $(\nu, \kappa, \lambda)$-$\MBPD$~over the group $\mathbb{Z}_{\nu}$.
\end{theorem}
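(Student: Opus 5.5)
By Lemma~\ref{lemmamain}, it suffices to construct a $\kappa\times\lambda$ subgroup magic rectangle over $\mathbb{Z}_\nu$. Its row sums must form $\langle\lambda\rangle$ (each multiple of $\lambda$ exactly once) and its column sums must form $\langle\kappa\rangle$ (each multiple of $\kappa$ exactly once). Since $\nu=\kappa\lambda$ is odd, both $\kappa$ and $\lambda$ are odd. I would build the array explicitly.

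\textbf{Construction.} Index rows by $r\in[0,\kappa-1]$ and columns by $c\in[0,\lambda-1]$, and put
\[
\zeta_{rc}=r+\kappa\,\pi_r(c)\in\mathbb{Z}_\nu ,
\]
where each $\pi_r$ is a permutation of $\{0,1,\ldots,\lambda-1\}$ (read modulo $\lambda$). Every integer in $[0,\nu-1]$ is uniquely $r+\kappa s$ with $r\in[0,\kappa-1]$ and $s\in[0,\lambda-1]$. The residue $r$ determines the row, and since $\pi_r$ is a bijection, each element of $\mathbb{Z}_\nu$ appears exactly once, for any choice of the $\pi_r$.

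\textbf{Row sums.} Row $r$ contains $r+\kappa s$ for every $s\in[0,\lambda-1]$, so its sum is
\[
\lambda r+\kappa\frac{\lambda(\lambda-1)}{2}.
\]
Because $\lambda$ is odd, $(\lambda-1)/2$ is an integer, so the second term is a multiple of $\nu$. The row sum is therefore $\lambda r \bmod \nu$. As $r$ runs over $[0,\kappa-1]$, these are exactly the elements of $\langle\lambda\rangle$, which has order $\kappa$.

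\textbf{Column sums.} Column $c$ has sum
\[
\frac{\kappa(\kappa-1)}{2}+\kappa\sum_{r}\pi_r(c).
\]
Since $\kappa$ is odd, the first term is a multiple of $\kappa$. The map $s\mapsto \kappa s$ from $\mathbb{Z}_\lambda$ to $\mathbb{Z}_\nu$ is well defined and injective, so we need $c\mapsto\sum_r\pi_r(c)\pmod\lambda$ to be a bijection of $\mathbb{Z}_\lambda$.

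The naive choice $\pi_r=\mathrm{id}$ fails here: it gives $\kappa c$, which is not injective when $\gcd(\kappa,\lambda)>1$. This is the main obstacle, and I would resolve it by cancellation using that $\kappa-1$ is even. Take
\[
\pi_0(c)=c,\qquad \pi_{2m-1}(c)=c,\qquad \pi_{2m}(c)=-c \bmod \lambda,\quad m\in\bigl[1,(\kappa-1)/2\bigr].
\]
Each of these is a permutation of $\mathbb{Z}_\lambda$. The paired terms cancel, so $\sum_r\pi_r(c)\equiv c \pmod\lambda$. Hence column $c$ has sum $\kappa\bigl(\frac{\kappa-1}{2}+c\bigr)\bmod\nu$. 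As $c$ runs over $[0,\lambda-1]$, these are exactly the elements of $\langle\kappa\rangle$, which has order $\lambda$.

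\textbf{Conclusion.} The array is an $\SMR_{\mathbb{Z}_\nu}(\kappa,\lambda:\langle\lambda\rangle,\langle\kappa\rangle)$. Applying Lemma~\ref{lemmamain}, after re-indexing rows and columns so that the $i$-th row sum is $i\lambda$ and the $j$-th column sum is $j\kappa$, gives the required $(\nu,\kappa,\lambda)$-$\MBPD$ over $\mathbb{Z}_\nu$. The only delicate points are checking that oddness of $\kappa$ and $\lambda$ kills the two triangular-number terms modulo $\nu$ and modulo $\kappa$ respectively, and that reduction modulo $\lambda$ inside the factor $\kappa$ is harmless.
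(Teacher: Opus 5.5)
Your proof is correct, but it takes a different route from the paper. The paper's proof is a one-line appeal to Lemma~2 and Theorem~6 of \cite{karthik2025existence}: it imports the existence of a $\kappa\times\lambda$ subgroup magic rectangle over $\mathbb{Z}_\nu$ for odd $\nu$ from that earlier work, and transfers it through the equivalence of Lemma~\ref{lemmamain}.

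You keep the same reduction but build the rectangle yourself. With $\zeta_{rc}=r+\kappa\pi_r(c)$, the rows are the cosets $r+\langle\kappa\rangle$, so the row sums are $\lambda r$ for every choice of the $\pi_r$. The alternation $\pi_{2m-1}=\mathrm{id}$, $\pi_{2m}=-\mathrm{id}$ gives $\sum_r\pi_r(c)\equiv c\pmod{\lambda}$. Hence the column sums $\kappa\bigl(\frac{\kappa-1}{2}+c\bigr)$ run once through $\langle\kappa\rangle$. For example, with $\kappa=5$, $\lambda=3$ one gets row sums $0,3,6,9,12$ and column sums $10,0,5$, as required.

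What your version buys is a self-contained, explicit design. The two uses of oddness are visible: killing the triangular numbers, and pairing off rows $1,\dots,\kappa-1$. It is also valid whether or not $\gcd(\kappa,\lambda)=1$. The naive choice $\pi_r=\mathrm{id}$ already works in the coprime case, which the paper treats separately later via $\varphi(s,t)=s\lambda+t\kappa$. Your sign alternation is exactly what rescues it when $\gcd(\kappa,\lambda)>1$. The paper's citation is shorter, and it ties the theorem to the specific rectangle of \cite{karthik2025existence}, which the paper reuses in Proposition~\ref{prop:MBPDauto}.

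Note also that you do not really need Lemma~\ref{lemmamain}. After your re-indexing, row $0$ becomes $A_\kappa$, and column $c$ becomes $B_j$ with $j\equiv c+\frac{\kappa-1}{2}\pmod{\lambda}$, $j\in[1,\lambda]$. The rows and columns are then already the blocks of the required MBPD.
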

\begin{proof}
   The proof follows from [\cite{karthik2025existence}, Lemma 2],  and [\cite{karthik2025existence}, Theorem 6].
\end{proof}
\begin{theorem}{\label{thmkar}}
      If $n\equiv  0 \bmod{4}$, then there exists a $(n^2,n ,n)$-$\MBPD$~over $\mathbb{Z}_{n^2}$.
\end{theorem}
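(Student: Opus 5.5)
By Lemma \ref{lemmamain}, it is enough to build an $n\times n$ subgroup magic rectangle over $\mathbb{Z}_{n^2}$ whose row sums and column sums each form the subgroup $\langle n\rangle=\{0,n,2n,\ldots,(n-1)n\}$ of order $n$. The likely shortest route is the one used for Theorem \ref{thmfre}: cite the existence result for $\SMR_{\mathbb{Z}_{n^2}}(n,n:\langle n\rangle,\langle n\rangle)$ with $n\equiv 0\bmod 4$ from \cite{karthik2025existence}. To keep the argument self-contained, I would also give a direct construction.

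Write $n=4m$ and index rows and columns by $r,c\in[0,n-1]$. Start from the base array $\zeta_{rc}=rn+\sigma(r,c)$. Here, for each fixed $r$, the map $c\mapsto\sigma(r,c)$ is a permutation of $[0,n-1]$. Every element $rn+s$ then occurs exactly once, so the array is a valid filling of $\mathbb{Z}_{n^2}$.

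\textbf{Row sums.} The row sum is $n\cdot rn+\binom{n}{2}\equiv n(n-1)/2 \bmod n^2$, which is the same for every row. The row sums must instead be distinct multiples of $n$. So I would modify the array with $n\times n$ correction blocks in the spirit of the $4\times4$ Example above. The key facts are:
\begin{itemize}
\item[(a)] Swapping two entries within a column leaves the column sums unchanged and shifts two row sums by $\pm d$, where $d$ is the difference of the swapped entries.
\item[(b)] Since $4\mid n$, the entries can be grouped into $2\times2$ blocks $\{rn+s,\ rn+s',\ r'n+s,\ r'n+s'\}$. These let us transfer an amount $(r'-r)n$ between rows while preserving the column structure.
\end{itemize}

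\textbf{Target sums.} For rows I would aim for sums $\{tn\}$, and for columns for sums of the form $\{tn\}$ after a global correction. The constant $n(n-1)/2=2m(4m-1)$ is divisible by $2m$ but not necessarily by $n$. Each group of four rows can be paired in the "$\lambda-\lambda$" cancelling pattern seen in the proof of Theorem \ref{x}. That pattern shows the total sum $\sum_{x\in\mathbb{Z}_{n^2}}x\equiv n^2/2$ matches $\sum_t tn=n\cdot n(n-1)/2\equiv n^2/2$. So there is no obstruction to the total.

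\textbf{Making the sums distinct.} Next, choose the permutations $\sigma(r,\cdot)$ so that the column sums $\sum_r \sigma(r,c)$ run through a complete residue pattern with the required differences. One option is to use $\sigma(r,c)=c$ for half the rows and $\sigma(r,c)=n-1-c$ for the other half, with a shifted variant on a quarter of the rows. This makes column $c$ receive $\sum_r rn$ plus an adjustment $\equiv cn \bmod n^2$. After that, perform swaps within columns, between row pairs $(r,r')$, to spread the equal row sums into $0,n,\ldots,(n-1)n$. Condition $4\mid n$ is what lets the four-row groups both fix the parity defect $n/2$ and absorb it.

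The main obstacle is controlling row and column sums simultaneously: each local swap that fixes one side perturbs the other. I would first try to verify the construction for $n=4$ against the given example, then for $n=8$, and only then generalise by taking a direct-sum-like product of $4\times4$ blocks with an $m\times m$ pattern. If that becomes unwieldy, I would fall back on citing \cite{karthik2025existence} together with Lemma \ref{lemmamain}.
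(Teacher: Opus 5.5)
Your reduction via Lemma~\ref{lemmamain} to an $n\times n$ subgroup magic rectangle over $\mathbb{Z}_{n^2}$ with row and column subgroups $\langle n\rangle$ is correct, but what follows is not a proof. The fallback citation of \cite{karthik2025existence} is unfounded. The paper draws on that reference for existence only when $\nu$ is odd (Theorem~\ref{thmfre}), and proves the case $n\equiv 0\pmod 4$ by an explicit construction of its own, so you cannot assume the result is there. Your direct construction never fixes the permutations $\sigma(r,\cdot)$ or the swaps. The one concrete choice you state also fails: with $\sigma(r,c)=c$ on half the rows and $\sigma(r,c)=n-1-c$ on the other half, $\sum_r\sigma(r,c)=\tfrac n2(n-1)$ for every $c$. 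So all column sums are equal rather than differing by $cn$, and the ``shifted variant on a quarter of the rows'' meant to repair this is never specified.

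More seriously, your tools (a) and (b) preserve the wrong invariants. In the base array every row sum is $\tfrac{n(n-1)}{2}\equiv\tfrac n2\pmod n$, so every row must change by an amount $\equiv\tfrac n2\pmod n$. A transfer of $(r'-r)n$ as in (b) leaves every row sum unchanged modulo $n$. A within-column swap shifts row sums modulo $n$ by $\pm\bigl(\sigma(r,c)-\sigma(r',c)\bigr)$. That shift is $0$ for $\sigma(r,c)=c$, which is the natural choice that genuinely gives column sums $\tfrac{n^2}{2}+nc$, i.e.\ all of $\langle n\rangle$. So some swaps must move entries between columns, and these disturb the column sums. This is precisely the obstacle you name and leave unresolved; the total-sum check only verifies a necessary condition.

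The missing ingredient is a concrete two-phase scheme in which the second phase cannot undo the first. The paper starts from the column-boustrophedon array $\zeta_{i,j}=n(j-1)+i$ for odd $j$ and $\zeta_{i,j}=nj-(i-1)$ for even $j$. Its row sums are all $\tfrac n2$ and its column sums all $\tfrac{n(n+1)}{2}$ modulo $n^2$.
\begin{itemize}
\item[(1)] It swaps $(j,j)$ with $(\tfrac n2-j+1,\,n-j+1)$, and $(\tfrac n2+j,\,j)$ with $(n-j+1,\,n-j+1)$, for $j\in[1,\tfrac n4]$; this is where $4\mid n$ enters. These are cross swaps, and a direct computation shows the row sums become exactly $\langle n\rangle$.
\item[(2)] It then swaps $(i,i)$ with $(i,i+\tfrac n2)$ for $i\in[1,\tfrac n2]$. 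These swaps stay inside rows, so the row sums are untouched, while the column sums become exactly $\langle n\rangle$.
\end{itemize}
To close the gap you need something equally explicit: named swap positions together with the computation of every resulting row and column sum.
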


 \begin{proof}
     From Lemma \ref{lemmamain}, it is enough to construct an $n\times n$ subgroup magic rectangle over the group $\mathbb{Z}_{n^2}$. The construction is as follows:\\
     \noindent Step 1. First, we construct an $n\times n$ matrix $\mathbb{M}=(\zeta_{i,j})$ with $\zeta_{i,j} \in \mathbb{Z}_{n^2}$, is defined by 
    \[\zeta_{i,j}=\begin{cases}
        (n(j-1)+i)\bmod{n^2} & \textnormal{if~} j\in [1,n] \textnormal{~and} ~j~ \text{is odd},\\
        (nj-(i-1))\bmod{n^2} & \textnormal{if~} j\in [1,n] \textnormal{~and} ~j~ \text{is even}.
    \end{cases}\]
Then, for each $i\in [1,n]$, the row-sums of $\mathbb{M}$ is given below. 
\begin{align*}
    \sum_{j=1}^{n}\zeta_{i,j}=\frac{n^2(n^2+1)}{2n}\pmod{n^2}=\frac{n}{2}\pmod{n^2}.
\end{align*}
Similarly, for each $j\in[1,n]$, the column-sums of $\mathbb{M}$ is, 
\begin{align*}
    \sum_{i=1}^{n}\zeta_{i,j}=\frac{n(n+1)}{2}\pmod{n^2}.
\end{align*}Note that the above step is the first step for the construction of an even order magic square (see \cite{MR2569784}). At this stage, the set of all $n$ row-sums (or column-sums) does not form a subgroup of $\mathbb{Z}_{n^2}$. Further rearrangements in  $\mathbb{M}$ will be done in next steps so that the set of all row-sums and column-sums form subgroups.  
\vskip .1cm \noindent Step 2. Now, we define  $\mathbb{M}'=(\zeta_{i',j'})$ from $\mathbb{M}$  by swapping only the elements $(i,j)\rightleftharpoons (\frac{n}{2}-i+1, n-j+1)$, whenever  $i=j$ and $j\in [1, \frac{n}{4}]$ and swap the elements $(i,j)\rightleftharpoons (n-j+1, n-j+1)$, whenever $i=\frac{n}{2}+j$, where $j\in [1, \frac{n}{4}]$ and fix the remaining elements as the same. That is, \[
\zeta_{i',j'} =
\begin{cases}
   \zeta_{\tfrac{n}{2}-j+1,\, n-j+1}, & \text{if } i=j,\; j \in \big[1, \tfrac{n}{4}\big], \\[6pt]
    \zeta_{n-j+1,\, n-j+1}, & \text{if } i=j-\tfrac{n}{2} ,\; j \in \big[n-\tfrac{n}{4}+1, n\big], \\[6pt]
   \zeta_{\,n-j+1,\, n-j+1}, & \text{if } i=\tfrac{n}{2}+j,\; j \in \big[1, \tfrac{n}{4}\big], \\[6pt]
    \zeta_{-\frac{n}{2}+j+1,\, n-j+1}, & \text{if } i=j ,\; j \in \big[n-\tfrac{n}{4}+1, n\big], \\[6pt]
   \zeta_{i,j}, & \text{otherwise}.
\end{cases}
\]
\noindent In this step, the sum of the entries in each row is determined. 

\noindent For each $i\in [1, \frac{n}{4}]$, the $i$-th row-sum $\Sigma[R_i]$  of $\mathbb{M}'$ is given by,
 \[\Sigma[R_i] = \sum_{j=1}^{n}\zeta_{i,j}-\zeta_{i,i}+\zeta_{\frac{n}{2}-(i-1), n -(i-1)}. \numberthis \label{7}\]
\begin{align*}
  \Sigma[R_i] & = \begin{cases}
        \frac{n}{2}\bmod{n^2}-(n(i-1)+i)\bmod{n^2}+(n(n-(i-1))-(\frac{n}{2}-(i-1)-1)\bmod{n^2} & \text{if}~ i ~ \text{is odd}\\
\frac{n}{2}\bmod{n^2}-(ni-(i-1))\bmod{n^2}+n(n-(i-1)-1)\bmod{n^2}+(\frac{n}{2}-(i-1))\bmod{n^2}& \text{if}~ i ~ \text{is even}\\
    \end{cases}\\[6pt]
    & =\begin{cases}
        \left(\frac{n}{2}-(ni-n+i)+(n^2-ni+n-\frac{n}{2}+i\right)\bmod{n^2}& \text{if}~ i ~ \text{is odd}\\
 \left(\frac{n}{2}-ni+i-1+n^2-ni=\frac{n}{2}-i+1\right)\bmod{n^2}& \text{if}~ i ~ \text{is even}\\
    \end{cases}\\[6pt]
    & = \begin{cases}
       (-2ni+2n)\bmod{n^2}& \text{if}~ i ~ \text{is odd}\\
    (-2ni+n)\bmod{n^2}& \text{if}~ i ~ \text{is even}.
    \end{cases}
\end{align*}
Now for each $i\in [\frac{n}{4}+1, \frac{n}{2}]$, the row-sum $\Sigma[R_i]$ is given as, 
\begin{align*}
    \Sigma[R_i]&= \sum_{j=1}^{n}\zeta_{i,j}-\zeta_{i, \frac{n}{2}+i}+\zeta_{\frac{n}{2}-i+1,\frac{n}{2}-i+1}\numberthis.
\end{align*}
\begin{align*}
    \Sigma[R_i] & = \begin{cases}\frac{n}{2}\bmod{n^2}-\left(\frac{n^2}{2}+ni-n+i\right)\bmod{n^2} +\left(\frac{n^2}{2}-ni+n-\frac{n}{2}+i\right)\bmod{n^2}  & \text{if}~ i ~ \text{is odd}\\
    \frac{n}{2}\bmod{n^2}-\left(\frac{n^2}{2}+ni-i+1\right)\bmod{n^2} +\left(\frac{n^2}{2}-ni+\frac{n}{2}-i+1\right)\bmod{n^2}  & \text{if}~ i ~ \text{is even}\\
    \end{cases}\\[6pt]
    &= \begin{cases}\left(\frac{n}{2}-\frac{n^2}{2}-ni+n-i +\frac{n^2}{2}-ni+n-\frac{n}{2}+i\right)\bmod{n^2}  & \text{if}~ i ~ \text{is odd}\\
    \left(\frac{n}{2}-\frac{n^2}{2}-ni+i +\frac{n^2}{2}-ni+\frac{n}{2}-i\right)\bmod{n^2}  & \text{if}~ i ~ \text{is even}\\
    \end{cases}
    \\[6pt]
    &= \begin{cases}\left(-2ni+2n\right)\bmod{n^2}  & \text{if}~ i ~ \text{is odd}\\
    \left(-2ni+n\right)\bmod{n^2}  & \text{if}~ i ~ \text{is even}.\\
    \end{cases}
    \end{align*}
Further, for each $i\in [\frac{n}{2}+1, \frac{n}{2}+\frac{n}{4}]$, the $i$-th row-sum, $\Sigma[R_i]$ is given by, 
\[\Sigma[R_i]=\sum_{j=1}^{n}\zeta_{i,j}- \zeta_{i, i-\frac{n}{2}} + \zeta_{\frac{3n}{2}-i+1,\frac{3n}{2}-i+1 }. \numberthis\]
\begin{align*}
    \Sigma[R_i] &  = \begin{cases}\frac{n}{2}\bmod{n^2}-\left(ni-\frac{n^2}{2}-n+i\right)\bmod{n^2} +\left(\frac{3n^2}{2}-ni+n-i+1\right)\bmod{n^2}  & \text{if}~ i ~ \text{is odd}\\
    \frac{n}{2}\bmod{n^2}-\left(ni-\frac{n^2}{2}-i+1\right)\bmod{n^2} +\left(\frac{3n^2}{2}-ni+n+i\right)\bmod{n^2}  & \text{if}~ i ~ \text{is even}\\
    \end{cases}\\[6pt]
    &= \begin{cases}\left(\frac{n}{2}-ni+\frac{n^2}{2}+n-i +\frac{3n^2}{2}-ni+n-(\frac{3n}{2}-i+1)+1\right)\bmod{n^2}  & \text{if}~ i ~ \text{is odd}\\
    \left(\frac{n}{2}-ni+\frac{n^2}{2}+(\frac{3n}{2}-i+1) -1 +\frac{3n^2}{2}-ni+n+i\right)\bmod{n^2}  & \text{if}~ i ~ \text{is even}\\
    \end{cases}
    \\[6pt]
    &= \begin{cases}\left(-2ni+n\right)\bmod{n^2}  & \text{if}~ i ~ \text{is odd}\\
    \left(-2ni+2n\right)\bmod{n^2}  & \text{if}~ i ~ \text{is even}.\\
    \end{cases}
\end{align*}
Similarly, for each $i\in [\frac{3n}{4}+1, n]$, the $i$-th row-sum $\Sigma[R_i]$ is given by, 
\[\Sigma[R_i]=\sum_{j=1}^{n}\zeta_{i,j}- \zeta_{i, i} + \zeta_{\frac{3n}{2}-i+1,n-i+1 } \numberthis\label{10}\]
\begin{align*}
    \Sigma[R_i] &  = \begin{cases}\frac{n}{2}\bmod{n^2}-\left(ni-n+i\right)\bmod{n^2} +\left(n^2-ni+n-i+1\right)\bmod{n^2}  & \text{if}~ i ~ \text{is odd}\\
    \frac{n}{2}\bmod{n^2}-\left(ni-(i-1)\right)\bmod{n^2} +\left(n^2-ni+i\right)\bmod{n^2}  & \text{if}~ i ~ \text{is even}\\
    \end{cases}\\[6pt]
    &= \begin{cases}\left(n-2ni\right)\bmod{n^2}  & \text{if}~ i ~ \text{is odd}\\
    \left(2n-2ni\right)\bmod{n^2}  & \text{if}~ i ~ \text{is even}.\\
    \end{cases}
\end{align*} Suppose $\mathcal{A}$ denotes the set of all row sums. We define
\[
\alpha(i)=
\begin{cases}
2n & i\text{ is odd and } i\in \left[1,\frac n2\right],
       \text{ or }
       i\text{ is even and } i\in \left[\frac n2+1,n\right],\\[4pt]
n & i\text{ is even and } i\in \left[1,\frac n2\right],
      \text{ or }
      i\text{ is odd and } i\in \left[\frac n2+1,n\right].
\end{cases}
\]
Then, by \eqref{7}--\eqref{10},
\[
\mathcal{A}
=
\left\{
(-2ni+\alpha(i)) \bmod n^{2}
:\;
i\in[1,n]
\right\}.
\]
\noindent Now, we need to prove that the set $\mathcal{A}$ is an $n$ order subgroup of $\mathbb{Z}_{n^2}$. Each element of $\mathcal{A}$ can be written as 
\[
a_i \equiv -2ni + \alpha(i) \bmod{n^2}, ~ \alpha(i)\in\{n,2n\},
\]
so that 
\[
a_i \equiv n \cdot t(i) \bmod{n^2}, ~
t(i) = -2i + \tfrac{\alpha(i)}{n}.
\]
Hence $\mathcal{A} \subseteq \langle n \rangle$. 
Moreover, the map $i \mapsto t(i) \bmod{n}$ produces all $n$ distinct residues 
(the two parity cases yield the $n/2$ odd and $n/2$ even residues). 
Thus $|\mathcal{A}| = n$, and therefore 
\(
\mathcal{A} = \langle n \rangle,
\)
which is a subgroup of $\mathbb{Z}_{n^2}$. At this stage, we have shown that the row-sums forms a subgroup. In the following step, we prove the the set of all column-sums forms a subgroup.
\vskip .1cm \noindent Step 3. At this stage, we proceed by applying the swap map to the columns in an analogous manner. Let $\mathbb{M}''=(\zeta_{i'',j''})$. Define  $\mathbb{M}''$ from $\mathbb{M}'$  by performing the swap $(i',j')\rightleftharpoons (i', i'+\frac{n}{2})$, where $i'\in [1, \frac{n}{2}]$, while keeping the remaining elements fixed. Therefore, \\
\noindent For each $j\in[1, \frac{n}{4}]$, the $j$-th column-sum $\Sigma[C_j]$, of $\mathbb{M}''$, is 
\[\Sigma[C_j]= \sum_{i=1}\zeta_{i,j}-(\zeta_{j,j}+\zeta_{j+\frac{n}{2}, j})+(\zeta_{j, \frac{n}{2}+j}+\zeta_{n-j+1, n-j+1}) \numberthis\label{11}\]
\begin{align*}
    \Sigma[C_j] & =\begin{cases}
        \frac{n(n+1)}{2}\bmod{n^2}-(2nj-2n+2j+\frac{n}{2}))\bmod{n^2}+(\frac{n^2}{2}+n^2+2j)\bmod{n^2} & \text{if}~ j ~ \text{is odd}\\ 
         \frac{n(n+1)}{2}\bmod{n^2}-(2nj-2j+\frac{n}{2}+2)\bmod{n^2}+(\frac{n^2}{2}-2j+2+n)\bmod{n^2} & \text{if}~ j ~ \text{is even}\\ 
    \end{cases}\\[6pt]
  & =\begin{cases}
        \left(\frac{n(n+1)}{2}-(2nj-2n+2j+\frac{n}{2})+(\frac{n^2}{2}+2j)\right)\bmod{n^2} & \text{if}~ j ~ \text{is odd}\\ 
        \left(\frac{n(n+1)}{2}-(2nj-2j+\frac{n}{2}+2)+(\frac{n^2}{2}-2j+2+n)\right)\bmod{n^2} & \text{if}~ j ~ \text{is even}\\ 
           \end{cases}\\[6pt]
           & =\begin{cases}
       (-2nj+2n)\bmod{n^2} & \text{if}~ j ~ \text{is odd}\\ 
        (-2nj+n )\bmod{n^2}& \text{if}~ j ~ \text{is even}.\\ 
           \end{cases}\\
\end{align*}
Now, for each $j\in [\frac{n}{4}+1, \frac{n}{2}]$, the $j$-th column-sum $\Sigma[C_j]$, is given below, 
\[\Sigma[C_j]= \sum_{i=1}\zeta_{i,j}-\zeta_{j,j}+\zeta_{\frac{n}{2}-j+1, \frac{n}{2}-j+1}\numberthis\]
\begin{align*}
     \Sigma[C_j] & =\begin{cases}
        \frac{n(n+1)}{2}\bmod{n^2}-(nj-n+j)\bmod{n^2}+(\frac{n^2}{2}-nj+n-\frac{n}{2}+j)\bmod{n^2} & \text{if}~ j ~ \text{is odd}\\ 
         \frac{n(n+1)}{2}\bmod{n^2}-(nj-j+1)\bmod{n^2}+(\frac{n^2}{2}-nj+\frac{n}{2}-j+1)\bmod{n^2} & \text{if}~ j ~ \text{is even}\\ 
    \end{cases}\\[6pt]
    & = \begin{cases}
        (\frac{n(n+1)}{2}-(nj-n+j)+(\frac{n^2}{2}-nj+n-\frac{n}{2}+j))\bmod{n^2} & \text{if}~ j ~ \text{is odd}\\ 
         (\frac{n(n+1)}{2}-(nj-j+1)+(\frac{n^2}{2}-nj+\frac{n}{2}-j+1))\bmod{n^2} & \text{if}~ j ~ \text{is even}\\ 
    \end{cases}\\[6pt]
    &  =\begin{cases}
       (-2nj+2n)\bmod{n^2} & \text{if}~ j ~ \text{is odd}\\ 
        (-2nj+n)\bmod{n^2} & \text{if}~ j ~ \text{is even}.\\ 
           \end{cases}\\
\end{align*}
Further, for each $j \in[\frac{n}{2}+1, \frac{3n}{4}]$, the $j$-th column-sum is, 
\[\Sigma[C_j]= \sum_{i=1}\zeta_{i,j}-\zeta_{j-\frac{n}{2}, j}+\zeta_{n-j+1,\frac{3n}{2} - j +1}\numberthis\]
\begin{align*}
     \Sigma[C_j] & =\begin{cases}
        \frac{n(n+1)}{2}\bmod{n^2}-(nj-n+j-\frac{n}{2})\bmod{n^2}+(\frac{3n^2}{2}-nj+j)\bmod{n^2} & \text{if}~ j ~ \text{is odd}\\ 
         \frac{n(n+1)}{2}\bmod{n^2}-(nj-j+\frac{n}{2}+1)\bmod{n^2}+(\frac{3n^2}{2}-nj+n-j+1)\bmod{n^2} & \text{if}~ j ~ \text{is even}\\ 
    \end{cases}\\[6pt]
    & = \begin{cases}
        \left(\frac{n(n+1)}{2}-(nj-n+j-\frac{n}{2})+(\frac{3n^2}{2}-nj+j)\right)\bmod{n^2} & \text{if}~ j ~ \text{is odd}\\ 
         \left(\frac{n(n+1)}{2}-(nj-j+\frac{n}{2}+1)+(\frac{3n^2}{2}-nj+n-j+1)\right)\bmod{n^2} & \text{if}~ j ~ \text{is even}\\ 
    \end{cases}\\[6pt]
    &  =\begin{cases}
       (-2nj +2n)\bmod{n^2}& \text{if}~ j ~ \text{is odd}\\ 
        (-2nj+n)\bmod{n^2} & \text{if}~ j ~ \text{is even}.\\ 
           \end{cases}\\
\end{align*}
Similarly,  for each $j\in[\frac{3n}{4}+1, n]$, the $j$-th column-sum is, \[\Sigma[C_j]= \sum_{i=1}\zeta_{i,j}-(\zeta_{j-\frac{n}{2}, j}+\zeta_{j,j})+\zeta_{j-\frac{n}{2},j- \frac{n}{2}}+\zeta_{\frac{3n}{2}-j+1, n-j+1}\numberthis \label{14}\]
\begin{align*}
     \Sigma[C_j] & =\begin{cases}
        \frac{n(n+1)}{2}\bmod{n^2}-(2nj-2n+2j-\frac{n}{2})\bmod{n^2}+(\frac{n^2}{2}+2j-n)\bmod{n^2} & \text{if}~ j ~ \text{is odd}\\ 
         \frac{n(n+1)}{2}\bmod{n^2}-(2nj-2j+\frac{n}{2}+2)\bmod{n^2}+(\frac{n^2}{2}-2j+n+2)\bmod{n^2} & \text{if}~ j ~ \text{is even}\\ 
    \end{cases}\\[6pt]
    & = \begin{cases}
        \left(\frac{n(n+1)}{2}-(2nj-2n+2j-\frac{n}{2})+(\frac{n^2}{2}+2j-n)\right)\bmod{n^2} & \text{if}~ j ~ \text{is odd}\\ 
         \left(\frac{n(n+1)}{2}-(2nj-2j+\frac{n}{2}+2)+(\frac{n^2}{2}-2j+n+2)\right)\bmod{n^2} & \text{if}~ j ~ \text{is even}\\ 
    \end{cases}\\[6pt]
    &  =\begin{cases}
       (-2nj +2n)\bmod{n^2}& \text{if}~ j ~ \text{is odd}\\ 
        (-2nj+n)\bmod{n^2} & \text{if}~ j ~ \text{is even}.\\ 
           \end{cases}\\
\end{align*}
 Let $\mathcal{B}$ be the set of all column sums. Define
$\beta(j)=
2n$ if $j$ is odd and 
$\beta(j)=n$ if $j$ is even. \\
Then from \eqref{11} to \eqref{14},
\[
\mathcal{B}
=
\left\{
(-2nj+\beta(j)) \bmod n^{2}
:\;
j\in[1,n]
\right\}.
\] \noindent Each element of $\mathcal{B}$ has the form 
$
b_j \equiv (-2nj + \beta(j))\bmod{n^2}, ~\beta(j)\in\{n,2n\},
$ and hence 
\[
b_i \equiv n t(j) \bmod{n^2}, 
\qquad t(j)=-2i+\tfrac{\beta(j)}{n}.
\]
Thus $\mathcal{B}\subseteq \langle n\rangle$. 
As $j$ ranges from $1$ to $n$, the values $t(j)\pmod n$ cover all $n$ residues 
(the odd $j$ give one parity class and the even $j$ give the other). 
Therefore $|\mathcal{B}|=n$, and we conclude that
\(
H_2 = \langle n \rangle,
\)
which is a subgroup of $\mathbb{Z}_{n^2}$.
 Consequently, there exists an $n \times n$ subgroup magic rectangle, $\SMR_{\mathbb{Z}_{n^2}}(n,n:H_1, H_2)$. 
\end{proof}
\begin{example}\label{ex} By  
Theorem $\ref{thmkar}$, the step-by-step construction of a $8\times 8$ subgroup magic rectangle over $\mathbb{Z}_{64}$ is given below. 
\end{example}
\noindent  \textbf{Step 1. Assignment of group elements to $8\times 8$ array.}  
We consider the group $\mathbb{Z}_{64}$ and assign its $64$ elements  
into an $8 \times 8$ array. \\  Define the array $\mathbb{M}=(\zeta_{i,j})$, whose  entries $\zeta_{i,j}$'s from $\mathbb{Z}_{64}$, are given by 
\[\zeta_{i,j}=\begin{cases}
(8(j-1)+i)\bmod{64} & \text{if~} j \text{~is odd,}\\
(8j-(i-1))\bmod{64} & \text{if~}j \text{~is even.}\\
\end{cases}\] 
   \begin{figure}[htbp]
  \centering
   \begin{tikzpicture}[scale=1]
   \node at (-4.5,0){$\mathbb{M}=$};
   \node at (0,0){\addtolength{\tabcolsep}{2pt}
\renewcommand{\arraystretch}{1.2}
 \centering
\begin{tabular}{|m{0.4cm}|m{0.4cm}|m{0.4cm}|m{0.4cm}|m{0.4cm}|m{0.4cm}|m{0.4cm}|m{0.4cm}|}
\hline
1 & 16 & 17 & 32 & 33 & 48 & 49 & 64 \\[2pt] \hline
2 & 15 & 18 & 31 & 34 & 47 & 50 & 63 \\ \hline
3 & 14 & 19 & 30 & 35 & 46 & 51 & 62 \\ \hline
4 & 13 & 20 & 29 & 36 & 45 & 52 & 61 \\ \hline
5 & 12 & 21 & 28 & 37 & 44 & 53 & 60 \\ \hline
6 & 11 & 22 & 27 & 38 & 43 & 54 & 59 \\ \hline
7 & 10 & 23 & 26 & 39 & 42 & 55 & 58 \\ \hline
8 & 9  & 24 & 25 & 40 & 41 & 56 & 57 \\ \hline
\end{tabular}};
\node at (4.5,0){\addtolength{\tabcolsep}{2pt}
\renewcommand{\arraystretch}{1.2}
 \centering
\begin{tabular}{|m{0.4cm}|} \hline
4 \\ \hline  4 \\\hline  4 \\\hline 4 \\\hline 4 \\ \hline 4 \\\hline 4 \\ \hline  4\\
 \hline
\end{tabular}};
\node at (0,-2.8)
{\addtolength{\tabcolsep}{2pt}
\renewcommand{\arraystretch}{1.2}
 \centering
\begin{tabular}{|m{0.4cm}|m{0.4cm}|m{0.4cm}|m{0.4cm}|m{0.4cm}|m{0.4cm}|m{0.4cm}|m{0.4cm}|}
\hline
36 & 36  & 36  & 36  & 36  & 36  & 36  & 36 \\ \hline
\end{tabular}
};

\end{tikzpicture}
\caption{A construction of the array $\mathbb{M}$}
\end{figure}
\noindent At this stage, each row-sums of $\mathbb{M}$ is  congruent to $4$ modulo $64$, while each column-sums of $\mathbb{M}$ is congruent to $36$ modulo $64$. Therefore, a further rearrangement of the entries is required.\\
 
\noindent \textbf{Step 2. Construction of the row subgroup.}  
In this step, we interchange certain entries of the array to adjust the row-sums. 
Specifically, we perform the following swaps:  
\[
\zeta_{1,1}\rightleftharpoons \zeta_{3,1}, \quad 
\zeta_{2,2}\rightleftharpoons \zeta_{4,8}, \quad 
\zeta_{5,1}\rightleftharpoons \zeta_{7,7}, \quad 
\zeta_{6,2}\rightleftharpoons \zeta_{8,8}.
\]
\vspace{-0.5cm}
\begin{figure}[h]
   \centering
   \begin{tikzpicture}[scale=1]
    \node at (-4.5,0){$\mathbb{M}'=$};
   \node at (0,0){\addtolength{\tabcolsep}{2pt}
\renewcommand{\arraystretch}{1.2}
 \centering
\begin{tabular}{|c|c|c|c|c|c|c|c|}
\hline
\textcolor{blue}{\textbf{61}} & 16 & 17 & 32 & 33 & 48 & 49 & 64 \\ \hline
2  & \textcolor{blue}{\textbf{51}} & 18 & 31 & 34 & 47 & 50 & 63 \\ \hline
3  & 14 & 19 & 30 & 35 & 46 & \textcolor{blue}{\textbf{15}}& 62 \\ \hline
4  & 13 & 20 & 29 & 36 & 45 & 52 & \textcolor{blue}{\textbf{1}}  \\ \hline
\textcolor{blue}{\textbf{57}} & 12 & 21 & 28 & 37 & 44 & 53 & 60 \\ \hline
6  & \textcolor{blue}{\textbf{55}}& 22 & 27 & 38 & 43 & 54 & 59 \\ \hline
7  & 10 & 23 & 26 & 39 & 42 & \textcolor{blue}{\textbf{11}} & 58 \\ \hline
8  & 9  & 24 & 25 & 40 & 41 & 56 & \textcolor{blue}{\textbf{5}} \\ \hline
\end{tabular}};
\node at (4.6,0){\addtolength{\tabcolsep}{2pt}
\renewcommand{\arraystretch}{1.2}
 \centering
\begin{tabular}{|m{0.4cm}|}
\hline 0 \\ \hline 40 \\ \hline  32 \\ \hline 8 \\ \hline 56 \\ \hline  48 \\ \hline 24 \\  \hline 16\\
\hline
\end{tabular}};
% \draw [decorate,decoration={brace,amplitude=10pt}, black](4.9,2.2)-- (4.9,-2.2);
\node at (4.6, 2.8){$\langle8\rangle$};

\node at (0,-2.8){\addtolength{\tabcolsep}{2pt}
\renewcommand{\arraystretch}{1.2}
 \centering

\begin{tabular}{|m{0.4cm}|m{0.4cm}|m{0.4cm}|m{0.4cm}|m{0.4cm}|m{0.4cm}|m{0.4cm}|m{0.4cm}|}
\hline
20  & 52  & 36  & 36  & 36  & 36  & 20  & 52 \\
\hline
\end{tabular}
};
\end{tikzpicture}
\caption{The set of all row-sums forms the subgroup $\langle 8 \rangle$ in $\mathbb{M}'$ array}
\end{figure}
%-------------------
\\
At this stage, the set of all row-sums forms the subgroup $\langle 8 \rangle$ of $\mathbb{Z}_{64}$. That is $\mathbb{M}'$ is $\langle 8 \rangle $ kotzig array.  
However, the column sums do not yet form a subgroup, and therefore an additional rearrangement is required.
 
\noindent \textbf{Step 3. Construction of the column subgroup.}  
In this step, we interchange selected entries of the array in order to adjust the column-sums.  
The following swaps are carried out:  
\[
\zeta_{1,1} \rightleftharpoons \zeta_{1,5}, \quad 
\zeta_{2,2} \rightleftharpoons \zeta_{2,6}, \quad 
\zeta_{3,1}\rightleftharpoons \zeta_{3,7}, \quad 
\zeta_{4,4}\rightleftharpoons \zeta_{4,8}.
\]
\vspace{-0.5cm}
\begin{figure}[h]
   \centering
   \begin{tikzpicture}[scale=1]
   \node at (-4.5,0){$\mathbb{M}''=$};
   \node at (0,0){\addtolength{\tabcolsep}{2pt}
\renewcommand{\arraystretch}{1.2}
 \centering 
\begin{tabular}{|m{0.4cm}|m{0.4cm}|m{0.4cm}|m{0.4cm}|m{0.4cm}|m{0.4cm}|m{0.4cm}|m{0.4cm}|}
\hline
\textcolor{red}{\textbf{33}} & 16 & 17 & 32 & \textcolor{red}{\textbf{61}} & 48 & 49 & 64 \\ \hline
2  & \textcolor{red}{\textbf{47}} & 18 & 31 & 34 & \textcolor{red}{\textbf{51}} & 50 & 63 \\ \hline
3  & 14 & \textcolor{red}{\textbf{15}} & 30 & 35 & 46 & \textcolor{red}{\textbf{19}} & 62 \\ \hline
4  & 13 & 20 & \textcolor{red}{\textbf{1}} & 36 & 45 & 52 &   \textcolor{red}{\textbf{29}}  \\ \hline
57 & 12 & 21 & 28 & 37 & 44 & 53 & 60 \\ \hline
6  & 55 & 22 & 27 & 38 & 43 & 54 & 59 \\ \hline
7  & 10 & 23 & 26 & 39 & 42 & 11 & 58 \\ \hline
8  & 9  & 24 & 25 & 40 & 41 & 56 & 5  \\ \hline
\end{tabular}};
\node at (4.6,0){\addtolength{\tabcolsep}{2pt}
\renewcommand{\arraystretch}{1.2}
 \centering
\begin{tabular}{|m{0.4cm}|}
\hline 0 \\ \hline 40 \\ \hline  32 \\ \hline 8 \\ \hline 56 \\ \hline  48 \\ \hline 24 \\  \hline 16\\
\hline
\end{tabular}};
% \draw [decorate,decoration={brace,amplitude=10pt}, black](4.9,2.2)-- (4.9,-2.2);
\node at (4.6, 2.8){$\langle8\rangle$};
\node at (0,-2.8){
\addtolength{\tabcolsep}{2pt}
\renewcommand{\arraystretch}{1.2}
 \centering

\begin{tabular}{|m{0.4cm}|m{0.4cm}|m{0.4cm}|m{0.4cm}|m{0.4cm}|m{0.4cm}|m{0.4cm}|m{0.4cm}|}
\hline
56  & 48  & 32  & 8   & 0   & 40  & 24  & 16\\
\hline
\end{tabular}
};
% \draw [decorate,decoration={brace,amplitude=10pt}, black](3,-3.2)-- (-3,-3.2);
\node at (-4.3, -2.8){$\langle8\rangle$};
\end{tikzpicture}
\caption{The set of all column-sums forms the subgroup $\langle 8 \rangle$ in $\mathbb{M}''$ array}
\end{figure}

\noindent At this step, the set of all column sums forms a subgroup of $\mathbb{Z}_{64}$ again. Therefore, the array $\mathbb{M}''$ is a subgroup magic rectangle with row-subgroup and column-subgroup are $\langle 8 \rangle$ in $\mathbb{Z}_{64}$. 
\par In the next theorem, we recursively construct higher-order $m \times n$ subgroup magic rectangles by expanding the above construction.
\begin{lemma}\label{karlem}
   If $m\equiv 0\bmod{4}$ and $n=km,$ where $k$ is odd, then there exists an $m\times n$ subgroup magic rectangle over $\mathbb{Z}_{mn}$.
\end{lemma}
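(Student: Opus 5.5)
The plan is to build the $m\times n$ rectangle from $k$ side-by-side copies of the $m\times m$ subgroup magic rectangle of Theorem~\ref{thmkar}, lifted from $\mathbb{Z}_{m^2}$ to $\mathbb{Z}_{km^2}$. The lift adds multiples of $m^2$, with offsets arranged in a Latin-type pattern. By Lemma~\ref{lemmamain} it suffices to produce the array, whose row sums must form $\langle n\rangle=\langle km\rangle$ (order $m$) and whose column sums must form $\langle m\rangle$ (order $km=n$) in $\mathbb{Z}_{mn}=\mathbb{Z}_{km^2}$.

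\textbf{Setup.} Let $S=(s_{ij})$ be the $\SMR_{\mathbb{Z}_{m^2}}(m,m:\langle m\rangle,\langle m\rangle)$ from Theorem~\ref{thmkar}, with entries taken as integers in $[0,m^2-1]$. Write the integer row sums as $r_i\equiv m w_i \pmod{m^2}$, where $\{w_i\}$ runs over all residues mod $m$. Write the integer column sums as $C_j=m u_j+m^2 l_j$, where $\{u_j\}$ runs over $[0,m-1]$ and $l_j\in\mathbb{Z}$.

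Choose an $m\times k$ integer array $L=(L(i,t))$, $t\in[0,k-1]$, as follows:
\begin{itemize}
\item rows $1$ and $2$ are $L(i,t)=t$;
\item the remaining $m-2$ rows (an even number, since $4\mid m$) are grouped into pairs, one row of each pair equal to $t\bmod k$ and the other equal to $-t\bmod k$.
\end{itemize}
Every row of $L$ is a permutation of $\mathbb{Z}_k$, and the column sums satisfy $e_t:=\sum_i L(i,t)\equiv 2t \pmod k$. Since $k$ is odd, the $e_t$ are pairwise distinct mod $k$.

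Define the $m\times km$ array with blocks $t=0,\dots,k-1$. The $(i,j)$ entry of block $t$ is $s_{ij}+m^2L(i,t) \bmod km^2$.

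\textbf{Each element appears once.} Fix $(i,j)$. As $t$ varies, $L(i,t)$ runs over $\mathbb{Z}_k$, so the entries $s_{ij}+m^2\ell$ with $\ell\in[0,k-1]$ all appear exactly once. Since the $s_{ij}$ exhaust $[0,m^2-1]$, every element of $\mathbb{Z}_{km^2}$ appears exactly once.

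\textbf{Row sums.} Row $i$ sums to
\[
k r_i+m^2\sum_{t}L(i,t)\equiv k r_i + m^2\tfrac{k(k-1)}{2}\pmod{km^2}.
\]
The second term vanishes mod $km^2$ because $k$ is odd. Here one must be careful that $r_i$ is an integer and only $r_i \bmod m^2$ is controlled, but $k r_i \bmod km^2$ depends only on $r_i \bmod m^2$. Hence the row sums are $km\,w_i \bmod km^2$, which give exactly the $m$ elements of $\langle km\rangle$.

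\textbf{Column sums.} Column $j$ of block $t$ sums to
\[
C_j+m^2e_t=m\bigl(u_j+m(l_j+e_t)\bigr)\pmod{km^2}.
\]
It remains to check that $u_j+m(l_j+e_t)\bmod km$ takes all $km$ values as $(j,t)$ varies. Reducing mod $m$ recovers $u_j$, hence $j$. For fixed $j$, the values $m(l_j+e_t)\bmod km$ are distinct because the $e_t$ are distinct mod $k$. Counting gives exactly $km$ distinct multiples of $m$, namely all of $\langle m\rangle$.

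\textbf{Main obstacle.} The delicate step is the column sums. The naive choice of offset $m^2t$ in block $t$ gives column offsets $m^3t$, which need not be distinct mod $km^2$ when $\gcd(k,m)>1$ (for example $m=12$, $k=3$). The Latin-type array $L$ is there precisely to make the offsets $e_t\equiv 2t$ bijective mod $k$, using only that $k$ is odd and $m$ is even. I would also double-check the integer-lift bookkeeping for $l_j$, since $S$ is only defined mod $m^2$.
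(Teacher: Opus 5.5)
Your proof is correct. It differs from the paper's at exactly the step that matters, and it repairs that step.

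\textbf{What the paper does.} The paper uses the naive lift: block $t$ is the previous block with $m^2$ added to every entry. (It writes $m^2\mathbb{I}$, but the $8\times 24$ example shows every entry shifted by $64$.) This is your construction with $L(i,t)=t$ for all $i$, so $e_t=mt$. Its row sums agree with yours. For the column sums, however, it only shows $H_2\subseteq\langle m\rangle$. It then gets $|H_2|=km$ by saying that the column sums form a subgroup of order $km$, which is the claim to be proved.

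\textbf{Why that step fails in general.} The naive column sums are $C_j+tm^3$, and $tm^3\equiv m^2(tm \bmod k)\pmod{km^2}$. These are pairwise distinct if and only if $\gcd(m,k)=1$. For $m=12$, $k=3$ one has $m^3=1728=4\cdot 432\equiv 0\pmod{432}$, so every block repeats the column sums of the first block. The paper's argument therefore only covers $\gcd(m,k)=1$, which includes its example $m=8$, $k=3$.

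\textbf{What your version buys.} Your offset array $L$ has rows that are permutations of $\mathbb{Z}_k$, which keeps each element appearing exactly once. Its column sums satisfy $e_t\equiv 2t\pmod k$, which is bijective because $k$ is odd. This removes the coprimality condition, so your argument proves the lemma as stated.

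\textbf{Minor corrections.}
\begin{itemize}
\item In the row sums, each row of block $t$ has $m$ entries carrying the offset $m^2L(i,t)$. The extra term is therefore $m^3\sum_t L(i,t)=m^3k(k-1)/2$, not $m^2\sum_t L(i,t)$. It still vanishes modulo $km^2$ because $(k-1)/2$ is an integer.
\item Lemma~\ref{lemmamain} is not needed, since the statement is already about the rectangle.
\item Pairing the remaining $m-2$ rows only uses that $m$ is even. The hypothesis $4\mid m$ enters through Theorem~\ref{thmkar}.
\item The integer-lift bookkeeping you flagged is fine. The congruence $C_j\equiv mu_j\pmod{m^2}$ defines $l_j$. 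Also $C_j+m^2e_t \bmod km^2$ depends only on $e_t \bmod k$, so your distinctness count goes through as written.
\end{itemize}
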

\begin{proof}
    The construction is given in three steps. 
    \\
    \noindent Step 1. Without loss of generality, assume $m \leq n$. By Theorem~\ref{thmkar},  let $\mathbb{M}$ be the $m \times m$ subgroup magic rectangle.\\
    \noindent Step 2. Let $\mathbb{A}$ be an $m \times n$ matrix with $n = km$, and $k$ is odd. 
Then $\mathbb{A}$ can be expressed as a concatenation of $k$ block matrices, each of size $m \times m$; that is,
\[
\mathbb{A} = \begin{bmatrix} 
\mathbb{A}_1 ~~& \mathbb{A}_2 ~~& \cdots & ~~\mathbb{A}_k
\end{bmatrix}
\]
Now, for each $i\in [1,k]$, we define the $m\times m$ matrix $\mathbb{A}_i$ such that \[\mathbb{A}_i=\begin{cases}
    \mathbb{M} & \text{if} ~i=1, \\
    m^2\mathbb{I}+ \mathbb{A}_{i-1} & \text{if}~ i \in [2,k].\end{cases},\]
where $\mathbb{I}$ is the $m\times m$ identity matrix.\\
\noindent Step 3. In this step, we prove that $\mathbb{A}$ forms a subgroup magic rectangle. Since $\mathbb{M}$ is a subgroup magic rectangle over $\mathbb{Z}_{m^2}$, the row-subgroup of $\mathbb{M}$ over $\mathbb{Z}_{m^2}$ is $\langle m\rangle$. We consider the expanded matrix $\mathbb{A}$ over $\mathbb{Z}_{k m^2}$, where $k$ is an odd integer and $m \equiv 0 \pmod{4}$, obtained by replicating and shifting $\mathbb{M}$ in blocks.

\noindent We claim that the following subset $H_1$ of the group $\mathbb{Z}_{k m^2}$,
\[
H_1 = \frac{(k-1
)k}{2}m^3 \bmod{km^2}+  \langle m \rangle   
\]
forms a subgroup under addition modulo $k m^2$.
Thus, \begin{align*}
    H_1 & = \{ikm\bmod{m^2}+ \frac{(k-1)k}{2}m^3\bmod{km^2}: i \in [1, m]\}\\
    & = \{ikm\bmod{m^2}:  i \in [1, m]\} = \{ikm\bmod{km^2}:  i \in [1, m]\}.
\end{align*}
\noindent Therefore, $H_1$ is  subgroup of order $m$. Similarly, the column sums of $\mathbb{A}$, are in the set $H_2$. Then, \begin{align*}
    H_2&= \{ilm\bmod{m^2}+(i-1)m^3\bmod{km^2} : i \in[1,k], l \in[1,m]\}
\end{align*} 
We first show that every element of \(\mathcal H_2\) belongs to the subgroup
\(\langle m\rangle\) of \(\mathbb Z_{km^2}\). Let
\[
h=ilm \bmod{m^2}+(i-1)m^3 \bmod{km^2}
\in \mathcal H_2.
\]
Since
\(
ilm \pmod{m^2}=m(il \pmod m),
\)
there exists an integer \(r\in[0,m-1]\) such that
\(
ilm \pmod{m^2}=rm.
\)
Hence
\(
h=rm+(i-1)m^3=m\bigl(r+(i-1)m^2\bigr).
\)
Therefore \(h\) is a multiple of \(m\), and thus
\(
h\in \langle m\rangle.
\)
Since \(h\) was arbitrary,
\(
\mathcal H_2\subseteq \langle m\rangle\). Now consider the subgroup generated by $\langle m\rangle$ in $\mathbb{Z}_{km^2}$. Since,
\[ o(m)=\frac{km^2}{\gcd(km^2,m)}=
\frac{km^2}{m}=
km.
\]
Thus \(\langle m\rangle\) is a subgroup of \(\mathbb Z_{km^2}\) of order \(km\). Observe also that
\(
m^3=m^2\cdot m,
\)
and hence
\(
m^3\in\langle m\rangle.
\)
Consequently,
\(
\langle m,m^3\rangle=\langle m\rangle.
\) Since the column sums of \(\mathbb A\) form a subgroup of order \(km\), we have
\(
|\mathcal H_2|=km.
\)
Combining this with
\(
\mathcal H_2\subseteq \langle m\rangle
~\text{and}~
|\langle m\rangle|=km,
\)
it follows that
\(
\mathcal H_2=\langle m\rangle.
\)
Therefore, 
\[H_2=\langle m\rangle=
\langle m,m^3\rangle=
\{tm \bmod{km^2}: t\in\mathbb Z\},
\]
\noindent which is a subgroup of \(\mathbb Z_{km^2}\) of order \(km\). This completes the proof.\end{proof}
\begin{example}
    Consider Example \(\ref{ex}\). Fix $k=3$. 
\end{example}

\begin{figure}[h]
   \begin{tikzpicture}[scale=0.5]
   \node at (-5,0){\resizebox{1\textwidth}{!}{\small\addtolength{\tabcolsep}{0pt}
\renewcommand{\arraystretch}{1}
\begin{tabular}{|l|l|l|l|l|l|l|l||l|l|l|l|l|l|l|l||l|l|l|l|l|l|l|l|}
\hline
33 & 16 & 17 & 32 & 61 & 48 & 49 & 64 &97&80&81&96&125&112&113&128 &161&144&145&160&189&176&177&192\\ \hline
2  & 47 & 18 & 31 & 34 & 51 & 50 & 63 &66&111&82&95&98&115&114&127 &130&175&146&159&162&179&178&191 \\ \hline
3  & 14 &15 & 30 & 35 & 46 &19 & 62 &67&78&79&94&99&110&83&126 &131&142&143&158&163&174&147&190 \\ \hline
4  & 13 & 20 & 1 & 36 & 45 & 52 &  29 &68&77&84&65&100&109&116&93 &132&141&148&129&164&173&180&157 \\ \hline
57 & 12 & 21 & 28 & 37 & 44 & 53 & 60 &121&76&85&92&101&108&117&124 &185&140&149&156&165&172&181&188 \\ \hline
6  & 55 & 22 & 27 & 38 & 43 & 54 & 59 &70&119&86&91&102&107&118&123&134&183&150&155&166&171&182&187 \\ \hline
7  & 10 & 23 & 26 & 39 & 42 & 11 & 58&71&74&87&90&103&106&75&122 &135&138&151&154&167&170&139&176 \\ \hline
8  & 9  & 24 & 25 & 40 & 41 & 56 & 5 &72&73&88&89&104&105&120&69 &136&137&152&153&168&169&184&133 \\ \hline
\end{tabular}}};
\node at (-5,-3.8){\resizebox{1\textwidth}{!}{\small\addtolength{\tabcolsep}{0pt}
 \renewcommand{\arraystretch}{1}
  \centering
\begin{tabular}{|l|l|l|l|l|l|l|l||l|l|l|l|l|l|l|l||l|l|l|l|l|l|l|l|}
\hline
120 &	176&	160	 & 8&	128 &	168	& 152&	16&	56&	112&	96&	136&	64&	104&	88&	144&	184 &	48	& 32& 	72	& 0& 	40&	24 & 	80 \\ \hline
\end{tabular}}};
\node at (12.5, 3.8){\small $H_1$};
\node at (12.5, -3.8){\small $H_2$};
\node at (12.5,0){\resizebox{0.056\textwidth}{!}{\small\addtolength{\tabcolsep}{0pt}
 \renewcommand{\arraystretch}{1}
  \centering
\begin{tabular}{|l|}
\hline
0 \\ \hline
120\\ \hline
96\\ \hline
24\\ \hline
168\\ \hline
144\\ \hline
72\\ \hline
48\\ \hline
\end{tabular}}};
\draw [decorate,decoration={brace,amplitude=10pt}, black](0,3)-- (11,3);
\draw [decorate,decoration={brace,amplitude=10pt}, black](-11.2,3)-- (-0.3,3);
\draw [decorate,decoration={brace,amplitude=10pt}, black](-21,3)-- (-12,3);
 \node at (5.5, 4.5){\small$\mathbb{A}_3=64\mathbb{I}+\mathbb{A}_2$};
 \node at (-5.5,4.5){\small$\mathbb{A}_2=64\mathbb{I}+\mathbb{A}_1$};
  \node at (-16.5, 4.5){$\mathbb{A}_1$};
\end{tikzpicture}
 \caption{$8 \times 24$ subgroup magic rectangle over $\mathbb{Z}_{192}$ }
\end{figure}
\begin{theorem}
     If $m\equiv 0\bmod{4}$ and $n=km,$, $k$ is odd, then there exists a $(km^2, m, km)$ modular balanced partition design.
\end{theorem}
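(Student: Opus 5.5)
The theorem should follow by combining Lemma~\ref{karlem} with the equivalence of Lemma~\ref{lemmamain}. Set $\nu = km^2$, $\kappa = m$ and $\lambda = km$, so that $\nu = \kappa\lambda$. Since $m\equiv 0 \bmod 4$ and $k$ is odd, Lemma~\ref{karlem} provides an $m\times km$ subgroup magic rectangle $\mathbb{A}$ over $\mathbb{Z}_{km^2}$. Its row sums form a subgroup $H_1$ of order $m$ and its column sums form a subgroup $H_2$ of order $km$.

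The main step is to apply the converse direction of Lemma~\ref{lemmamain} with $\kappa=m$ and $\lambda=km$.
\begin{enumerate}
\item[(a)] Because $\mathbb{Z}_{km^2}$ is cyclic, it has a unique subgroup of each order dividing $km^2$. Hence $H_1=\langle km\rangle=\langle\lambda\rangle$ and $H_2=\langle m\rangle=\langle\kappa\rangle$.
\item[(b)] Since the row sums are pairwise distinct and exhaust $H_1$, I reorder the rows so that the $i$-th row sums to $i\lambda \bmod \nu$ for $i\in[1,\kappa]$. In the same way, I reorder the columns so that the $j$-th column sums to $j\kappa \bmod \nu$ for $j\in[1,\lambda]$. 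Permuting rows does not change the column sums, and vice versa, so these two reorderings do not interfere.
\item[(c)] I take $A_i$ to be the set of entries of row $i$ and $B_j$ to be the set of entries of column $j$. Conditions (i)--(iii) of Definition~\ref{def1} then hold directly, exactly as in the proof of Lemma~\ref{lemmamain}.
\end{enumerate}

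The main obstacle is to confirm that the rectangle from Lemma~\ref{karlem} genuinely has distinct row sums filling a subgroup of order $m$, and distinct column sums filling a subgroup of order $km$. These are exactly the properties that the proof of Lemma~\ref{karlem} establishes. I take them as given from that lemma and only check that the cardinalities match the orders required by Proposition~\ref{thmim}.

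Finally, I would note that the parameters are consistent with the necessary conditions. Both $m$ and $km$ are even, so Theorem~\ref{thmnew} holds, and $\nu=km^2\equiv 0 \bmod 4$, so Corollary~\ref{c} raises no obstruction.
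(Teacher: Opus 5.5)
Your proposal is exactly the paper's proof, which consists solely of citing Lemma~\ref{lemmamain} and Lemma~\ref{karlem}; your steps (a)--(c) only spell out the re-indexing already done in the converse direction of Lemma~\ref{lemmamain}. One caution about the obstacle you deferred: the proof of Lemma~\ref{karlem} does not establish the column-sum property in general, so your argument, like the paper's, is only established when $\gcd(k,m)=1$. The reason is that in that construction the $i$-th copy $\mathbb{M}+(i-1)m^2$ has column sums $C_c+(i-1)m^3$ (with $C_c$ the column sums of $\mathbb{M}$), and $m^3$ has order $k/\gcd(k,m)$ in $\mathbb{Z}_{km^2}$, so these $km$ sums are distinct if and only if $\gcd(k,m)=1$ (e.g.\ for $m=12$, $k=3$ one has $m^3=1728\equiv 0 \pmod{432}$, and all three copies have identical column sums), while the paper's proof of that lemma assumes $|H_2|=km$ rather than proving it.
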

\begin{proof}
    The proof follows from Lemma \ref{lemmamain} and Lemma \ref{karlem}. 
\end{proof}
 \begin{theorem}
Let $X=\mathbb{Z}_\nu$, where   $\nu=\kappa\lambda$ and $\gcd(\kappa,\lambda)=1$. Suppose $\varphi:\mathbb{Z}_\kappa\times\mathbb{Z}_\lambda\to\mathbb{Z}_\nu$ is bijective map given by \(\varphi(s,t)\equiv (s\lambda+t\kappa) \bmod{\nu}.
\)
Let $A_i= \{\varphi(i,t): t\in\mathbb{Z}_\lambda\}$ and $B_j = \{\varphi(s,j): s\in\mathbb{Z}_\kappa \}$, where $i\in[0,\kappa-1]$ and $j\in[0,\lambda-1]$. Then $(X,\mathcal{A},\mathcal{B})$ 
is a modular balanced partition design, where $\mathcal{A}=\{A_i: i\in [0, \kappa-1]\}$, and $\mathcal{B}=\{B_j: j\in [0,\lambda-1]\}$.
\end{theorem}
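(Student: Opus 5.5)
The plan is a direct verification. By Lemma~\ref{lemmamain} it would suffice to show that the $\kappa\times\lambda$ array with $(i,j)$-entry $\varphi(i,j)$ is a subgroup magic rectangle over $\mathbb{Z}_\nu$. I will instead check conditions (i)--(iii) of Definition~\ref{def1} directly, allowing a re-indexing of the blocks (as in the proof of Lemma~\ref{lemmamain}), since the statement labels blocks from $0$ rather than $1$.

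\emph{Step 1: partitions and condition (iii).} Since $\gcd(\kappa,\lambda)=1$, the map $\varphi$ is a bijection by the Chinese Remainder Theorem. This is the same as saying that $\mathbb{Z}_\nu=\langle\lambda\rangle\oplus\langle\kappa\rangle$, with $|\langle\lambda\rangle|=\kappa$ and $|\langle\kappa\rangle|=\lambda$. It follows immediately that the $A_i$ are pairwise disjoint of size $\lambda$ and cover $X$, and likewise the $B_j$ are pairwise disjoint of size $\kappa$ and cover $X$. Moreover $A_i\cap B_j=\{\varphi(i,j)\}$, which gives (iii).

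\emph{Step 2: block sums.} I compute
\[
\sum_{a\in A_i}a\equiv \sum_{t=0}^{\lambda-1}(i\lambda+t\kappa)\equiv i\lambda^2+\kappa\frac{\lambda(\lambda-1)}{2}\pmod{\nu},
\]
and symmetrically $\sum_{b\in B_j}b\equiv j\kappa^2+\lambda\,\kappa(\kappa-1)/2\pmod{\nu}$. Here I first need the parities of $\kappa$ and $\lambda$. Theorem~\ref{thmnew} forces $\kappa\equiv\lambda\pmod 2$ for any MBPD, and coprimality excludes both being even. Hence the statement can only hold when $\kappa$ and $\lambda$ are both odd, i.e.\ $\nu$ is odd. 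I will state this as an implicit hypothesis. Indeed, if $\lambda$ is even and $\kappa$ is odd, the correction term is $\equiv\nu/2$, which is not a multiple of $\lambda$, so the construction genuinely fails in that case. With both odd, the correction terms are multiples of $\nu$, so the row sums are $i\lambda^2$ and the column sums are $j\kappa^2$.

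\emph{Step 3: the sums run over the subgroups, and re-indexing.} Write $i\lambda^2=\lambda\cdot(i\lambda)$. Since $\lambda$ is invertible mod $\kappa$, the map $i\mapsto i\lambda \bmod \kappa$ permutes $\mathbb{Z}_\kappa$. Hence $\{i\lambda^2 \bmod \nu\}$ is exactly $\{s\lambda: s\in[0,\kappa-1]\}=\langle\lambda\rangle$, with each element attained once. Symmetrically, $\{j\kappa^2\}$ is exactly $\langle\kappa\rangle$. I then relabel the blocks $A_i$ as $A_{s}$, where $s\in[1,\kappa]$ is chosen so that $s\equiv i\lambda \pmod{\kappa}$ (the value $s=\kappa$ taking the role of the residue $0$). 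After this relabelling, $\sum_{a\in A_s}a\equiv s\lambda\pmod{\nu}$, and I do the same for the $B_j$. This verifies (i) and (ii), in accordance with Proposition~\ref{thmim}.

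The only real obstacle is Step 2. The claim is false as literally stated when one of $\kappa,\lambda$ is even. So the proof must either add the hypothesis that $\nu$ is odd, or deduce it from Theorem~\ref{thmnew}, and it must also handle the index shift from $[0,\kappa-1]$ to $[1,\kappa]$ via the permutation in Step 3.
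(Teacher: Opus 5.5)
Your argument follows the same route as the paper's proof: the CRT bijection, the two partitions, $A_i\cap B_j=\{\varphi(i,j)\}$, then the two block-sum computations. It is correct under the extra hypothesis that $\nu$ is odd. Where you differ from the paper, you are repairing its proof rather than taking another route.

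First, the paper drops the terms $\kappa\lambda(\lambda-1)/2$ and $\lambda\kappa(\kappa-1)/2$ without comment. This is only legitimate when $\lambda$ and $\kappa$ are odd. You are right that the statement fails otherwise, consistent with Theorem~\ref{thmnew}: for $\kappa=3$, $\lambda=2$ the blocks $A_0,A_1,A_2$ have sums $3,1,5$, all outside $\langle 2\rangle=\{0,2,4\}\le\mathbb{Z}_6$.

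Second, the paper asserts $\lambda^2 i\equiv i\lambda \pmod{\kappa\lambda}$. This would need $\kappa\mid(\lambda-1)i$ and is false in general: for $\kappa=5$, $\lambda=3$, $i=1$ one gets $9\neq 3$ in $\mathbb{Z}_{15}$. Your relabelling $s\equiv i\lambda\pmod{\kappa}$, using that $\lambda$ is a unit modulo $\kappa$, is exactly the step needed to turn the sums $i\lambda^2$ into $s\lambda$ with $s\in[1,\kappa]$.

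Third, your description $\mathbb{Z}_\nu=\langle\lambda\rangle\oplus\langle\kappa\rangle$ with $|\langle\lambda\rangle|=\kappa$ corrects the paper's claim that $\langle\lambda\rangle=\langle\kappa\rangle=\mathbb{Z}_\nu$.
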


\begin{proof}
Since $\gcd(\kappa,\lambda)=1$, we see that $\langle \lambda\rangle = \langle \kappa \rangle = \mathbb{Z}_\nu$.
Now, any $x\in\mathbb{Z}_\nu$ is uniquely represented  as $x\equiv (s\lambda+t\kappa)\bmod{\nu}$, where $s\in\mathbb{Z}_\kappa$, $t\in\mathbb{Z}_\lambda$. Thus,  $\varphi$ is a bijection.
For fixed $i$, $A_i=\{\varphi(i,t):t\in\mathbb{Z}_\lambda\}$ contains exactly $\lambda$ elements,
one for each $t$, and distinctness follows from bijectivity of $\varphi$ in the second coordinate.
Similarly, for fixed $j$,  $B_j=\{\varphi(s,j):s\in\mathbb{Z}_\kappa\}$ has exactly $\kappa$ elements.
As $i$ (resp.\ $j$) varies,  $A_i$'s (resp.\ $B_j$'s) are disjoint and their respective union is $X$.
Given $i\in\mathbb{Z}_\kappa$ and $j\in\mathbb{Z}_\lambda$, we have
\[
A_i\cap B_j=\{\varphi(i,j)\},
\]
since any element common to both must have the same coordinates $(i,j)$ under the bijection $\varphi$.
Thus $|A_i\cap B_j|=1$.  Now we proceed to verify the modular sum condition for rows.
\[
\sum_{a\in A_i}a \;\equiv\; \sum_{t=0}^{\lambda-1} \bigl(i\lambda+t\kappa\bigr)
\;=\;\lambda (i\lambda)\;+\;\kappa\sum_{t=0}^{\lambda-1} t
\;=\;\lambda^2 i\;+\;\kappa\frac{\lambda(\lambda-1)}{2}\equiv\; \lambda^2 i \;\equiv\; i\lambda \bmod{\kappa\lambda}.
\]

\noindent Similarly, the modular sum condition for columns are given by, 
 
\[
\sum_{b\in B_j}b \;\equiv\; \sum_{s=0}^{\kappa-1} \bigl(s\lambda+j\kappa\bigr)
\;=\;\lambda\sum_{s=0}^{\kappa-1} s\;+\;\kappa( j\kappa)
\;=\;\lambda\frac{\kappa(\kappa-1)}{2}\;+\;\kappa^2 j\;\equiv\; \kappa^2 j \;\equiv\; j\kappa \bmod{\kappa\lambda}.
\] This completes the proof.
\end{proof}

 \section{Isomorphism and automorphism of \MBPD}
 In this section, we introduce and study the concept of isomorphism between two modular balanced partition designs. 
 \begin{definition} Suppose $(X, \mathcal{A}, \mathcal{B})$ and $(\mathcal{Y}, \mathcal{C}, \mathcal{D})$ are two modular balanced partition designs with $|X|=|\mathcal{Y}|$.  $(X, \mathcal{A}, \mathcal{B})$ and $(\mathcal{Y}, \mathcal{C}, \mathcal{D})$  are isomorphic if there exists a bijection $\psi: X\rightarrow \mathcal{Y}$ such that \[[\{\psi(x): x \in A\}: A \in \mathcal{A}]= \mathcal{C} \textnormal{~and~}[\{\psi(x'): x'\in B\}: B \in \mathcal{B}]= \mathcal{D}.\]
 \noindent In other words, if we replace  every point $x\in X$ by its image $\psi(x)$, then the collection of blocks in $\mathcal{A}$ and $\mathcal{B}$ is transformed into $\mathcal{C}$ and $\mathcal{D}$, respectively. Such a bijection $\psi$ is called an isomorphism.
\end{definition}

Suppose that \((X, \mathcal{A}, \mathcal{B})\) is a modular balanced partition design. An \textit{automorphism} of \((X, \mathcal{A}, \mathcal{B})\) is defined as an isomorphism of this design with itself.  In this context, the bijection \(\psi :X \rightarrow \mathcal{Y}\)  such that: 
\[
[ \{\psi(x) : x \in A\} : A \in \mathcal{A} ]= \mathcal{A} 
\quad \text{and} \quad 
[\{\psi(x) : x \in B\} : B \in \mathcal{B}] = \mathcal{B}.
\]
The identity map on $X$ is always a trivial automorphism, but for $\MBPD$ we have additional non-trivial automorphisms. 
\begin{example}\label{ex} Consider $(9, 3,3)-\MBPD$ design with $X  =\mathbb{Z}_{9}, 
   \mathcal{A}  =\{\{0,3,6\}, \{1,4,7\},\{2,5,8\}\}$ and $
    \mathcal{B}  = \{\{0,1,8\}, \{3,4,5\},\{6,7,2\} \}$.
We define a bijection $\psi$ from $\mathbb{Z}_9$ to itself  by $\psi(x)=(x+1)\bmod{9}$ on $(9, 3,3)-\MBPD$. Then the blocks in $\mathcal{A}$ and $\mathcal{B}$ become, 
\begin{align*}
 X&  =\mathbb{Z}_{9},\\ 
    \mathcal{A} &  =\{\{1,4,7\}, \{2,5,8\},\{3,6,0\}\}\\
    \mathcal{B} &  = \{\{1,2,0\}, \{4,5,6\},\{7,8,3\} \}.
\end{align*}
Thus, \(\psi\) is an isomorphism of two \(\MBPD\)s as well as an automorphism of \(\MBPD\)s. 
\end{example}

\begin{note}
    In Example \(\ref{ex}\), the map $\psi$ from  \((X, \mathcal{A}, \mathcal{B})\) to \((X, \mathcal{C}, \mathcal{D})\) defined by $\psi(x)=(x+3)\bmod{9}$ is not an isomorphism.
\end{note}
\begin{note}It is often convenient to represent a bijection
\(\psi:X\to X\) by its disjoint cycle decomposition.
For each \(x\in X\), consider the sequence
\(
x,\ \psi(x),\ (\psi\circ\psi)(x),\ (\psi\circ\psi\circ\psi)(x),\ \ldots.
\)
Since \(X\) is finite and \(\psi\) is a bijection, there exists a
smallest positive integer \(m\) such that
\(
(\underbrace{\psi\circ\psi\circ\cdots\circ\psi}_{m\text{ times}})(x)=x.
\)
This gives rise to the cycle
\(
\bigl(x\ \psi(x)\ (\psi\circ\psi)(x)\ \cdots\
(\underbrace{\psi\circ\cdots\circ\psi}_{m-1\text{ times}})(x)\bigr).
\)
Repeating this process for elements not yet contained in any cycle yields a
collection of pairwise disjoint cycles whose union is \(X\). The
lengths of these cycles sum to \(|X|=\nu\). \end{note}

\begin{theorem}
    The collection of all automorphisms of a modular balanced partition design ($X,\mathcal{A},\mathcal{B})$ forms a group under the operation of function composition.
\end{theorem}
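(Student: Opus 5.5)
We will show that the set $\mathrm{Aut}(X,\mathcal{A},\mathcal{B})$ of all automorphisms is a subgroup of the symmetric group $Sym(X)$. Since $Sym(X)$ is a group under composition and $\mathrm{Aut}(X,\mathcal{A},\mathcal{B})\subseteq Sym(X)$ by definition, it suffices to check three things: the identity lies in the set, the set is closed under composition, and it is closed under inverses. Associativity is then inherited from $Sym(X)$ and needs no argument.

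\emph{Notation.} For a bijection $\psi$ of $X$ and a subset $S\subseteq X$, write $\psi(S)=\{\psi(x):x\in S\}$, and write $\psi(\mathcal{A})=\{\psi(A):A\in\mathcal{A}\}$, and similarly for $\mathcal{B}$. The automorphism condition then reads $\psi(\mathcal{A})=\mathcal{A}$ and $\psi(\mathcal{B})=\mathcal{B}$.

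\textbf{Identity.} The identity map satisfies $\mathrm{id}(A)=A$ for every block, so $\mathrm{id}(\mathcal{A})=\mathcal{A}$ and $\mathrm{id}(\mathcal{B})=\mathcal{B}$.

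\textbf{Closure under composition.} Let $\psi_1,\psi_2$ be automorphisms. For any block $A\in\mathcal{A}$ we have $(\psi_1\circ\psi_2)(A)=\psi_1(\psi_2(A))$. Since $\psi_2(A)\in\mathcal{A}$, it follows that $\psi_1(\psi_2(A))\in\mathcal{A}$. Hence $(\psi_1\circ\psi_2)(\mathcal{A})\subseteq\mathcal{A}$. Conversely, $(\psi_1\circ\psi_2)(\mathcal{A})=\psi_1(\psi_2(\mathcal{A}))=\psi_1(\mathcal{A})=\mathcal{A}$, which gives equality directly. The same argument applies to $\mathcal{B}$, so $\psi_1\circ\psi_2$ is again an automorphism.

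\textbf{Inverses.} This is the only step that needs care. Let $\psi$ be an automorphism. The map $S\mapsto\psi(S)$ on subsets of $X$ is injective, because $\psi$ is a bijection. From $\psi(\mathcal{A})=\mathcal{A}$ we conclude that $\psi$ induces a permutation of the finite collection $\mathcal{A}$. Therefore every $A\in\mathcal{A}$ equals $\psi(A')$ for a unique $A'\in\mathcal{A}$. Then $\psi^{-1}(A)=A'\in\mathcal{A}$, and since $\psi^{-1}$ likewise acts injectively on blocks, $\psi^{-1}(\mathcal{A})=\mathcal{A}$. The same holds for $\mathcal{B}$, so $\psi^{-1}$ is an automorphism.

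These three facts show that $\mathrm{Aut}(X,\mathcal{A},\mathcal{B})$ is a subgroup of $Sym(X)$, and hence a group under composition. We remark that the modular sum conditions play no role in this argument: an automorphism is required only to permute the blocks, not to preserve the indexing by the sums $i\lambda$ or $j\kappa$.
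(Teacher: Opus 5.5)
Your proposal is correct and follows essentially the same route as the paper, which likewise checks directly that the identity is an automorphism and that compositions and inverses of automorphisms again map $\mathcal{A}$ and $\mathcal{B}$ onto themselves, with associativity coming from composition of maps. The only differences are framing: you run it as a subgroup test inside $Sym(X)$, so associativity is inherited (this is what the paper records in the corollary immediately following the theorem), and your inverse step makes explicit the fact, used tacitly in the paper, that every block of $\mathcal{A}$ has the form $\psi(A)$ for some block $A$.
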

\begin{proof}
Let $(X, \mathcal{A}, \mathcal{B})$ be a modular balanced partition design. Suppose $
\operatorname{Aut}(X,\mathcal{A},\mathcal{B})$ is the collection of all bijection 
$\psi:X \to X$ such that $\{\psi(x):x\in A\}:A\in\mathcal{A}]=\mathcal{A}$ and $
[\{\psi(x):x\in B\}:B\in\mathcal{B}]=\mathcal{B}\}$. Clearly, $\operatorname{Aut}(X,\mathcal{A},\mathcal{B})\ne \emptyset$, since the identity function $\mathrm{id}_{X}$ with $\mathrm{id}_{X}(x)=x,\forall x\in X$ is always in. Suppose $\psi, \varphi \in \operatorname{Aut}(X, \mathcal{A}, \mathcal{B})$. 
For any $A \in \mathcal{A}$, we have $\{\varphi(x) : x \in A\}=A'$ and $A'\in \mathcal{A}$. Now,  
\[
 \{(\psi\circ \varphi)(x) : x \in A\}=\{\psi(\varphi(x)) : x \in A\} = \{\psi(y) : y \in \{\varphi(x) : x \in A\}=\{\psi(y) : y \in A'\}.
\]
Since $\psi$ maps every block of $\mathcal{A}$ onto another block of $\mathcal{A}$, we have $\{\psi(y) : y \in A'\}=A''$, where $A''\in \mathcal{A}$ and thus, $\psi \circ \varphi$ maps each block of $\mathcal{A}$ to a block of $\mathcal{A}$. 
A similar argument applies to $\mathcal{B}$ and thus $\psi \circ \varphi \in \operatorname{Aut}(X, \mathcal{A}, \mathcal{B})$.  
Since the composition of mappings is associative, we have
\[
(\phi \circ \psi) \circ \varphi = \phi \circ (\psi \circ \varphi), \textnormal{~for~all~} \rho, \psi, \varphi \in \operatorname{Aut}(X, \mathcal{A}, \mathcal{B}).
\] Suppose $\psi \in \operatorname{Aut}(X, \mathcal{A}, \mathcal{B})$. For any $A \in \mathcal{A}$,
 and $B \in \mathcal{B}$,
 we have
$\{\psi(\mathrm{id}_{X}(x)) : x \in A\} = \{\psi(x) : x \in A\} = \{\mathrm{id}_{X}(\psi(x)) : x \in A\}$  and  $
\{\psi(\mathrm{id}_{X}(x)) : x \in B\} = \{\psi(x) : x \in B\} = \{\mathrm{id}_{X}(\psi(x)) : x \in B\}$ and hence the identity map $\mathrm{id}_{X}$ serves as the identity element. Moreover, if $\psi \in \operatorname{Aut}(X, \mathcal{A}, \mathcal{B})$, then $\psi$ is a bijection. 
For each $A \in \mathcal{A}$, let $A' =\psi(A)= \{\psi(x):x \in A\}$ and $A'\in \mathcal{A}$. Now, $\psi^{-1}(A') = \psi^{-1}(\psi(A)) = (\psi^{-1}\circ \psi)(A) = \mathrm{id}_X(A)  = A$. 
Thus, $\psi^{-1}$ maps $\mathcal{A}$ onto itself, and  similar argument applies to $\mathcal{B}$. 
Therefore, $\psi^{-1} \in \operatorname{Aut}(X, \mathcal{A}, \mathcal{B})$. In addition, for any $A\in \mathcal{A}$, and $B\in \mathcal{B}$, we have 
 $(\psi\circ \psi^{-1})(A)=\mathrm{id}_X(A)=(\psi^{-1}\circ \psi)(A)$ and $\psi\circ \psi^{-1}(B)=\mathrm{id}_X(B)=\psi^{-1}\circ \psi(B)$.
\noindent Therefore, $\operatorname{Aut}(X, \mathcal{A}, \mathcal{B})$ is a group under composition. 
\end{proof}
  
\begin{corollary}{\label{cor3}}
   If $\nu$ is the cardinality of the set $X$, then $\textnormal{Aut}(X,\mathcal{A}, \mathcal{B})$ is a subgroup of the symmetric group $S_{\nu}$.
\end{corollary}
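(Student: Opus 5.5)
The plan is to realise $\operatorname{Aut}(X,\mathcal{A},\mathcal{B})$ as a subgroup of $\operatorname{Sym}(X)$ and then transport it to $S_{\nu}$ by a relabelling of the points. By the preceding theorem, $\operatorname{Aut}(X,\mathcal{A},\mathcal{B})$ is already a group under composition. By definition, each of its elements is a bijection $\psi: X\to X$. So, as a set, it is contained in $\operatorname{Sym}(X)$, the group of all bijections of $X$, whose operation is also composition.

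First I will check that the group structure is inherited from $\operatorname{Sym}(X)$, which gives $\operatorname{Aut}(X,\mathcal{A},\mathcal{B})\le \operatorname{Sym}(X)$. The operation is function composition in both groups. The identity element of $\operatorname{Aut}(X,\mathcal{A},\mathcal{B})$ is $\mathrm{id}_X$, which is also the identity of $\operatorname{Sym}(X)$. The inverse of $\psi$ in the automorphism group was shown in the previous proof to be the set-theoretic inverse $\psi^{-1}$. If a one-step criterion is preferred, it is enough to note that for $\psi,\varphi$ in the automorphism group, $\psi\circ\varphi^{-1}$ again maps blocks of $\mathcal{A}$ onto blocks of $\mathcal{A}$ and blocks of $\mathcal{B}$ onto blocks of $\mathcal{B}$. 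This follows from the closure and inverse arguments already given.

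Second, since $|X|=\nu$, I will fix a bijection $\tau: X\to[1,\nu]$. It induces the map $\operatorname{Sym}(X)\to S_\nu$ given by $\sigma\mapsto \tau\circ\sigma\circ\tau^{-1}$. This map is a group isomorphism: it respects composition because $\tau^{-1}\circ\tau=\mathrm{id}_X$ cancels in the middle, and its inverse is conjugation by $\tau^{-1}$. The image of a subgroup under an isomorphism is a subgroup. Hence $\operatorname{Aut}(X,\mathcal{A},\mathcal{B})$ is isomorphic to a subgroup of $S_\nu$, and under the identification $X\leftrightarrow[1,\nu]$ it is a subgroup of $S_\nu$. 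This is the sense of the corollary.

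The argument is routine, and the only delicate point is interpretive. The statement really means ``isomorphic to a subgroup of $S_\nu$'', or ``a subgroup after identifying $X$ with $[1,\nu]$''. When $X=\mathbb{Z}_\nu$ one can take $\tau$ to be the natural labelling $x\mapsto x+1$. I will state this identification explicitly so that the inclusion is well defined.
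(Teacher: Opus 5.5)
Your proposal is correct and takes the same approach as the paper. The paper gives no separate proof: the corollary follows at once from the preceding theorem that $\operatorname{Aut}(X,\mathcal{A},\mathcal{B})$ is a group under composition whose elements are bijections of $X$, hence a subgroup of $\operatorname{Sym}(X)\cong S_\nu$. Your explicit relabelling $\tau: X\to[1,\nu]$ simply makes precise the identification that the paper leaves tacit.
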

%Since the subgroups of a symmetric group are permutation groups, by Corollary \ref{cor3}, $\textnormal{Aut}(X,\mathcal{A}, \mathcal{B})$ are examples of the permutation group.
\begin{example}
    The $(9,3,3)-\MBPD$ in Example \textnormal{\ref{ex}},
has another isomorphism, $\phi=(2~~ 3 ~~8~~4~~5\\~~6~~0~~1~~7)$. The composition $\varphi=\psi \circ \phi$ is defined as $\varphi(x)=\phi(\psi(x))$ for all $x\in X$. Hence, we have, 
$\psi=(1~~2~~3~~4~~5~~6~~7~~8~~0)$ and therefore $\varphi=(3~~4~~0~~5~~6~~7~~1~~2~~8)$. Consequently, 
\begin{align*}
 X&  =\mathbb{Z}_{9},\\ 
    \mathcal{A} &  =\{\{3,5,1\}, \{4,6,2\},\{8,7,0\}\}\\
    \mathcal{B} &  = \{\{3,4,8\}, \{5,6,7\},\{1,2,0\} \}.
\end{align*}
\noindent Thus,
$\varphi$ is an isomorphism of $\MBPD$. \end{example}

\begin{lemma}
\label{lem:aut-action-blocks}
Let $\Gamma^*=\operatorname{Aut}(X,\mathcal{A},\mathcal{B})$. Then $\Gamma^*$ acts naturally on the set of $\mathcal{A}$-blocks and on the set of $\mathcal{B}$-blocks. Consequently $\Gamma^*$ embeds as a subgroup of $\operatorname{Sym}(\mathcal{A})\times\operatorname{Sym}(\mathcal{B})$.
\end{lemma}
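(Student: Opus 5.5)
We define the two actions directly and then check that the combined map into $\operatorname{Sym}(\mathcal{A})\times\operatorname{Sym}(\mathcal{B})$ is injective. For $\psi\in\Gamma^*$ and $A\in\mathcal{A}$ set $\psi\cdot A=\psi(A)=\{\psi(x):x\in A\}$. By the definition of automorphism, $\psi(A)\in\mathcal{A}$. Since $\psi$ is a bijection of $X$ and the blocks of $\mathcal{A}$ are pairwise disjoint, distinct blocks have disjoint, hence distinct, images. So $A\mapsto\psi(A)$ is an injective self-map of the finite set $\mathcal{A}$, and therefore a permutation $\rho_{\mathcal{A}}(\psi)\in\operatorname{Sym}(\mathcal{A})$. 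The action axioms are immediate: $\mathrm{id}_X(A)=A$ and $(\psi\circ\varphi)(A)=\psi(\varphi(A))$. These are exactly the computations already made in the proof that $\operatorname{Aut}(X,\mathcal{A},\mathcal{B})$ is a group. Hence $\rho_{\mathcal{A}}:\Gamma^*\to\operatorname{Sym}(\mathcal{A})$ is a homomorphism. The same argument gives a homomorphism $\rho_{\mathcal{B}}:\Gamma^*\to\operatorname{Sym}(\mathcal{B})$.

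Next we define $\rho:\Gamma^*\to\operatorname{Sym}(\mathcal{A})\times\operatorname{Sym}(\mathcal{B})$ by $\rho(\psi)=(\rho_{\mathcal{A}}(\psi),\rho_{\mathcal{B}}(\psi))$. It is a homomorphism because each coordinate is. To obtain the embedding, it remains to show $\ker\rho$ is trivial, and this is the one step that uses the design structure, namely condition (iii) of Definition \ref{def1}. Suppose $\psi(A)=A$ for every $A\in\mathcal{A}$ and $\psi(B)=B$ for every $B\in\mathcal{B}$. Take any $x\in X$. Since $\mathcal{A}$ and $\mathcal{B}$ partition $X$, there are unique $A_i\ni x$ and $B_j\ni x$, and by (iii) we have $A_i\cap B_j=\{x\}$. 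Then $\psi(x)\in\psi(A_i)\cap\psi(B_j)=A_i\cap B_j=\{x\}$, so $\psi(x)=x$. Hence $\psi=\mathrm{id}_X$, and $\rho$ is injective.

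Equivalently, each point is the intersection of the unique row block and column block containing it, so an automorphism is determined by how it permutes the blocks. With the image of a point written as $\psi(A_i\cap B_j)=\psi(A_i)\cap\psi(B_j)$, the map $\rho$ is an isomorphism of $\Gamma^*$ onto its image, a subgroup of $\operatorname{Sym}(\mathcal{A})\times\operatorname{Sym}(\mathcal{B})$. The kernel computation is the only substantive step; everything else is routine.
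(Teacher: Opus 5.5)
Your proof is correct and follows the same route as the paper: the induced permutations of $\mathcal{A}$ and $\mathcal{B}$ give a homomorphism $\psi\mapsto(\rho_{\mathcal{A}}(\psi),\rho_{\mathcal{B}}(\psi))$, which is then shown to be injective. Your kernel argument is more precise than the paper's, which justifies injectivity only by saying that $\psi$ is determined by its action on points; the actual reason is the one you give, namely that condition (iii) makes each point the unique element of $A_i\cap B_j$, so an automorphism fixing every block satisfies $\psi(x)\in A_i\cap B_j=\{x\}$.
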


\begin{proof}
If $\psi\in \Gamma^*$ and $A\in\mathcal{A}$ then $\psi(A)=\{\psi(x):x\in A\}$ is again an $\mathcal{A}$-block by definition of $\Gamma^*$, so $\psi$ permutes $\mathcal{A}$. The map $\Gamma^*\to\operatorname{Sym}(\mathcal{A})\times\operatorname{Sym}(\mathcal{B})$ sending $\psi$ to $(\psi|_{\mathcal{A}},\psi|_{\mathcal{B}})$ is a group homomorphism and injective because $\psi$ is determined by its action on points; hence the embedding.
\end{proof}

\begin{theorem}
\label{thm:development}
Let $(X, \mathcal{A}, \mathcal{B})$ be a modular balanced partition design. 
Suppose there exists a cyclic subgroup 
$\mathcal{C}=\langle \tau \rangle \leq \operatorname{Aut}(X, \mathcal{A}, \mathcal{B})$
acting {regularly} on $X$, that is, $|\mathcal{C}|=\nu=|X|$, and for every pair of points 
$x, y \in X$ there exists a unique $\tau^i \in \mathcal{C}$ such that $\tau^i(x)=y$. 
Let $\mathcal{R} \subseteq \mathcal{A}$ be a set of representatives of the $\mathcal{C}$-orbits on $\mathcal{A}$. 
Then $
\mathcal{A} = \big\{\, \tau^i(R) : R \in \mathcal{R},\, i \in [0,  \nu-1] \,\big\},
$
and similarly,
$
\mathcal{B} = \big\{\, \tau^i(S) : S \in \mathcal{S},\,i \in [0,  \nu-1] \big\},
$, 
where $\mathcal{S}$ is a set of orbit representatives of $\mathcal{C}$ on $\mathcal{B}$. 
In particular, $(X, \mathcal{A}, \mathcal{B})$ can be completely generated by 
developing a small collection of base blocks under the cyclic action of $\mathcal{C}$.
\end{theorem}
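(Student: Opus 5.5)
The statement is essentially an orbit decomposition, so I will prove it by applying Lemma \ref{lem:aut-action-blocks} and then the Orbit-Stabilizer Theorem (Theorem \ref{ost}).

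First, since $\mathcal{C}\le \Gamma^*=\operatorname{Aut}(X,\mathcal{A},\mathcal{B})$, Lemma \ref{lem:aut-action-blocks} shows that $\mathcal{C}$ acts on the set $\mathcal{A}$ by $\tau^i\cdot A=\tau^i(A)$. This action is well defined: $\tau^i(A)\in\mathcal{A}$ for every $A\in\mathcal{A}$, and $\tau^i\circ\tau^k=\tau^{i+k}$. It therefore partitions $\mathcal{A}$ into $\mathcal{C}$-orbits. Fix a complete set $\mathcal{R}$ of orbit representatives, one chosen from each orbit.

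Next I prove the two inclusions. The inclusion $\{\tau^i(R): R\in\mathcal{R},\ i\in[0,\nu-1]\}\subseteq\mathcal{A}$ is immediate, because each $\tau^i$ is an automorphism and hence maps $\mathcal{A}$-blocks to $\mathcal{A}$-blocks. For the reverse inclusion, take $A\in\mathcal{A}$. It lies in exactly one orbit, whose representative is some $R\in\mathcal{R}$, so $A=\tau^i(R)$ for some integer $i$. Since $|\mathcal{C}|=\nu$, the generator $\tau$ has order $\nu$, and I may reduce $i$ modulo $\nu$ to obtain $i\in[0,\nu-1]$.

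The same argument applied to the action of $\mathcal{C}$ on $\mathcal{B}$, with orbit representatives $\mathcal{S}$, gives the description of $\mathcal{B}$. The "in particular" clause follows because each orbit $\operatorname{Orb}_{\mathcal{C}}(R)$ is the development of $R$. By Theorem \ref{ost} its size is $[\mathcal{C}:\operatorname{Stab}_{\mathcal{C}}(R)]$, so listing $\tau^i(R)$ for $i\in[0,\nu-1]$ produces each block of the orbit exactly $|\operatorname{Stab}_{\mathcal{C}}(R)|$ times. The family must therefore be read as a set, which matches how the statement writes it.

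The main subtlety is this repetition, together with the fact that regularity on points is never actually used for the set equality. Regularity only constrains the sizes of the block stabilizers. A regular element $\tau^i$ with $i\ne0$ fixes no point, and since $\mathcal{A}$ is a partition of $X$ into $\kappa$ blocks, $\mathcal{C}$ acts on those $\kappa$ blocks. The orbit of $R$ thus has size dividing $\kappa$, so its stabilizer is large, of order at least $\nu/\kappa=\lambda$. I would state this as a remark so that the development is not misread as producing $\nu$ distinct blocks; the development under $\mathcal{C}$ does not need to be injective.
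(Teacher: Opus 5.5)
Your proof is correct and follows the same orbit decomposition the paper uses: $\mathcal{C}$ permutes $\mathcal{A}$ and $\mathcal{B}$, every block lies in the orbit of its representative, and $\tau^{\nu}=\mathrm{id}$ lets you take $i\in[0,\nu-1]$. The one real difference is in your favour. The paper's proof asserts that ``by regularity, each orbit has exactly $\nu$ distinct images.'' That cannot hold when $\lambda>1$, since an orbit is a subset of $\mathcal{A}$ and $|\mathcal{A}|=\kappa<\nu$; the same applies to $\mathcal{B}$ when $\kappa>1$. Your two observations are exactly the correction that step needs: regularity is not used for the set equality, and the list $\tau^{i}(R)$, $i\in[0,\nu-1]$, repeats each block $|\operatorname{Stab}_{\mathcal{C}}(R)|$ times.

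Your closing remark is imprecise and can be sharpened. Knowing there are $\kappa$ blocks only bounds the orbit size by $\kappa$; it does not justify ``dividing $\kappa$.'' The true statement is stronger. Since $\mathcal{C}$ is transitive on $X$ and $\mathcal{A}$ is a $\mathcal{C}$-invariant partition, $\mathcal{C}$ is transitive on $\mathcal{A}$: if $x\in A$, $y\in A'$ and $g(x)=y$, then $g(A)$ is the block through $y$, namely $A'$. Hence $\mathcal{R}$ and $\mathcal{S}$ each consist of a single block, and the block stabilizers have order exactly $\lambda$ and $\kappa$ respectively. Fixed-point-freeness then shows each stabilizer acts regularly on its block. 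So the $\mathcal{A}$-blocks are the orbits of $\langle\tau^{\kappa}\rangle$ and the $\mathcal{B}$-blocks are the orbits of $\langle\tau^{\lambda}\rangle$. Two such blocks meeting at a point therefore share exactly $|\langle\tau^{\kappa}\rangle\cap\langle\tau^{\lambda}\rangle|=\gcd(\kappa,\lambda)$ points. Condition (iii) thus forces $\gcd(\kappa,\lambda)=1$, so the theorem's hypothesis is far more restrictive than ``a small collection of base blocks'' suggests.
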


\begin{proof}
Since $\mathcal{C}$ acts regularly on $X$, it acts \emph{sharply transitively} on the point set, 
meaning that each point of $X$ can be obtained from any other by the unique action 
of an element of $\mathcal{C}$. Consequently, the group action induces a partition of the block sets 
$\mathcal{A}$ and $\mathcal{B}$ into disjoint orbits under $\mathcal{C}$.

Let $\mathcal{R}$ denote a complete set of orbit representatives from $\mathcal{A}$.
For each representative block $R \in \mathcal{R}$, the orbit of $R$ under $\mathcal{C}$ is given by 
\[
\operatorname{Orb}_C(R) = \{\, \tau^i(R) : i \in [0,  \nu-1] \}.
\]
By regularity, each orbit has exactly $\nu$ distinct images, and every block of $\mathcal{A}$ 
appears in one of these orbits. Hence,
\[
\mathcal{A} = \bigcup_{R \in \mathcal{R}} \operatorname{Orb}_C(R)
= \big\{\, \tau^i(R) : R \in \mathcal{R},\, i \in [0,  \nu-1] \big\}.
\]

\

The same reasoning applies to $\mathcal{B}$: selecting a set $\mathcal{S}$ of orbit representatives 
and applying the cyclic action of $\mathcal{C}$ yields all blocks of $\mathcal{B}$. 

Thus, the entire modular balanced partition design is obtained by the cyclic development of 
a smaller subset of base blocks under the action of a regular automorphism group $\mathcal{C}$.
\end{proof}

\subsection{Construction of~\MBPD~using affine automorphism}

In the section, we assume $X=\mathbb{Z}_{\nu}$. By using affine automorphisms on finite cyclic groups $\mathbb{Z}_{\nu}$, we construct a non-isomorphic  modular balanced partition designs.

\begin{definition}[\cite{cameron1999permutation}]Let $\mathbb{Z}_{\nu}^{\times}$ denote the group of units of $\mathbb{Z}_{\nu}$. Define
\(
\operatorname{AG}(\nu)=\{\tau_{c,d}: c\in \mathbb{Z}_{\nu}^{\times},\ d\in \mathbb{Z}_{\nu}\},
\)
where each permutation $\tau_{c,d}$ of $\mathbb{Z}_{\nu}$ is given by
\(
\tau_{c,d}(x)=cx+d \pmod{\nu}.
\)
Then $\AG(\nu)$ forms a subgroup of the symmetric group on $\mathbb{Z}_{\nu}$ under composition. This group is called the \emph{affine group} (or the group of \emph{affine automorphisms}) of $\mathbb{Z}_{\nu}$.
\end{definition}
\begin{note}
     The order of the group $\AG(\nu)$ is $\nu\varphi(\nu),$ where $\varphi$ is Euler’s totient function.
\end{note}

   \begin{theorem}{\label{thm5.10}}
    Let $X = \mathbb{Z}_\nu$ and $(X, \mathcal{A}, \mathcal{B})$ be a modular balanced partition design. 
    If $\tau_{c,d} \in \AG(\nu)$ is an affine automorphism on $X$, then 
   $
    \big(X, \, \tau_{c,d}(\mathcal{A}), \, \tau_{c,d}(\mathcal{B}) \big)
    $
    is again a modular balanced partition design.
\end{theorem}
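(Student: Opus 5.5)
The plan is to check the three conditions of Definition~\ref{def1} directly for the image design. The one non-routine point is that the image blocks must be \emph{re-indexed} so that the prescribed congruences $\sum_{a\in A_i}a\equiv i\lambda$ and $\sum_{b\in B_j}b\equiv j\kappa$ hold again.

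Write $\tau=\tau_{c,d}$, so $\tau(x)=cx+d \bmod \nu$ with $c\in\mathbb{Z}_\nu^{\times}$. Since $\tau$ is a bijection of $X=\mathbb{Z}_\nu$, several properties transfer at once. The images $\tau(A_1),\dots,\tau(A_\kappa)$ are pairwise disjoint, each has size $\lambda$, and their union is $X$. The same holds for the images $\tau(B_j)$, each of size $\kappa$. Moreover
\[
\tau(A_i)\cap\tau(B_j)=\tau(A_i\cap B_j),
\]
which is a single element. So conditions (i) and (ii), apart from the sum congruences, and condition (iii) are immediate.

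For the sums, compute
\[
\sum_{a\in A_i}(ca+d)\equiv c\,i\lambda+\lambda d=\lambda(ci+d)\pmod{\nu}.
\]
The residue of $\lambda m \bmod \kappa\lambda$ depends only on $m \bmod \kappa$. Since $\gcd(c,\kappa\lambda)=1$, we also have $\gcd(c,\kappa)=1$. Hence the map $\sigma:i\mapsto ci+d \pmod{\kappa}$ is a permutation of $\mathbb{Z}_\kappa$, which we identify with $[1,\kappa]$ by taking representatives in that range. Define $A'_{\sigma(i)}=\tau(A_i)$. Then $\sum_{a\in A'_{i'}}a\equiv i'\lambda \pmod\nu$ for every $i'\in[1,\kappa]$. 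Equivalently, $\Omega_{\tau(\mathcal{A})}=\langle\lambda\rangle$, in line with Proposition~\ref{thmim}.

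The identical computation for columns gives $\sum_{b\in B_j}(cb+d)\equiv\kappa(cj+d)\pmod\nu$. Using the permutation $j\mapsto cj+d \pmod{\lambda}$ of $\mathbb{Z}_\lambda$, which again needs $\gcd(c,\lambda)=1$, we re-index the $\tau(B_j)$ as $B'_{j'}$ with sums $j'\kappa$.

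The main (though mild) obstacle is this re-indexing step. One must observe that the definition prescribes sums attached to labels, not merely the set of sums. One must also justify that $\sigma$ is a genuine permutation; this is exactly where invertibility of $c$ modulo $\nu$ is used, since it descends to invertibility modulo both $\kappa$ and $\lambda$. Once the blocks are relabelled, $(X,\tau(\mathcal{A}),\tau(\mathcal{B}))$ satisfies Definition~\ref{def1}.
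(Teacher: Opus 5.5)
Your proposal is correct and follows essentially the same route as the paper: both transfer the partition and intersection properties through the bijection $\tau_{c,d}$, compute the image block sums as $\lambda(ci+d)$ and $\kappa(cj+d)$ modulo $\nu$, and use invertibility of $c$ to show that these sums again run over $\langle\lambda\rangle$ and $\langle\kappa\rangle$. Your explicit relabelling via the permutation $i\mapsto ci+d \pmod{\kappa}$ (and $j\mapsto cj+d \pmod{\lambda}$) is a cleaner statement of the paper's step, which the paper phrases loosely as ``$i\mapsto ci \bmod \nu$ permutes $[1,\kappa]$''.
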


\begin{proof}
Since $\tau_{c,d}(X) = X$, it suffices to prove that the three conditions of Definition~\ref{def1} are satisfied by 
\(
(X, \tau_{c,d}(\mathcal{A}), \tau_{c,d}(\mathcal{B})).
\)

\noindent (i) Partition and modular sums for $\mathcal{A}$. 
As $\tau_{c,d}$ is a bijection on $X$, the sets $\tau_{c,d}(A_1),\dots,\tau_{c,d}(A_\kappa)$ form a partition of $X$. Since each block $A_i$ has cardinality $\lambda$, $|\tau_{c,d}(A_i)|=\lambda$.   We have,
\begin{align}{\label{eqn10}}
\sum_{a\in A_i}a \equiv i\lambda \bmod{\nu}, ~~~& i\in [1,\kappa].
\end{align}
Now, for each $i\in [1,\kappa]$, $
\tau_{c,d}(\sum_{a\in A_i}a) = \sum_{a\in A_i}(ca+d)\bmod{\nu} \equiv \bigl(c\,\sum_{a\in A_i}a + \lambda d \bigr)\bmod{\nu}$ and hence 
\[
\tau_{c,d}(\sum_{a\in A_i}a) \equiv \left(ci\lambda + \lambda d\right) \bmod{\nu}.
\]
Since $\gcd(c,\nu)=1$, the map $i \mapsto ci \bmod{\nu}$ is a permutation of $[1,\kappa]$. 
Consequently the set
\(
\{ ic\lambda\bmod{\nu} : i\in [1,\kappa] \} 
\)
is simply a reordering of $\{ i\lambda\bmod{\nu} : i\in [1,\kappa] \}$. 
Adding the fixed shift $\lambda d$ to each element preserves this property, so overall we obtain
\[
\{ \tau_{c,d}(\sum_{a\in A_i}a) : i\in [1,\kappa] \}
= \{ j\lambda \bmod{\nu} : j\in [1,\kappa] \}.
\]
\noindent (ii) Partition and modular sums for $\mathcal{B}$. 
An analogous argument applies to $\mathcal{B}=\{B_j: j \in [1,\lambda]\}$. The family $\tau_{c,d}(\mathcal{B})$ partitions $X$ into $\lambda$ blocks of size $\kappa$, and their block sums are permuted within the set $\{ j\kappa \bmod{\nu} : j\in[1,\lambda]\}$. Hence condition~(ii) holds.\\

\noindent (iii) Property of intersection.
For any $A\in\mathcal{A}$ and $B\in\mathcal{B}$, we have $|A\cap B|=1$. Since $\tau_{a,b}$ is a bijection,
\[
|\tau_{c,d}(A)\cap \tau_{c,d}(B)| = |\tau_{c,d}(A\cap B)| = |A\cap B| = 1,
\]
Therefore all three conditions are satisfied and hence
\(
(X, \tau_{c,d}(\mathcal{A}), \tau_{c,d}(\mathcal{B}))
\)
is again a modular balanced partition design.
\end{proof}
\begin{example}
 For the $(9,3,3)$-\MBPD ~presented in Example~$\ref{ex}$, we apply the affine automorphisms of $\mathbb{Z}_9$, namely the maps
\(
\tau_{c,d}(x)=cx+d \pmod 9,
\)
where $c\in\mathbb{Z}_9^{\times}$ and $d\in\mathbb{Z}_9$. The images of the block partitions $\mathcal{A}$ and $\mathcal{B}$ under these automorphisms yield further $(9,3,3)$-\MBPD s. Table~$\ref{tab:cfg}$ lists all modular balanced partition designs obtained in this manner. In particular, each row of Table~$\ref{tab:cfg}$ corresponds to a distinct affine automorphism of $\mathbb{Z}_9$ and the resulting $(9,3,3)$-\MBPD.
\end{example}

\begin{center}

\begin{longtable}{|c|c|c|c|}
\hline
$c\in \mathbb{Z}_{9}^{\times}$ & $cx+d, \, d\in \mathbb{Z}_{9}$ & $\mathcal{A}=\{A_1, A_2, A_3\}$ & $\mathcal{B}=\{B_1, B_2, B_3\}$  \\ 
\hline
\endfirsthead
\hline
\endhead
\hline
\noalign{\smallskip}
\endfoot
\endlastfoot

    \hline 
        \multirow{9}{*}{1} 
        & $x+0$ & $\{\{0,3,6\}, \{1,4,7\}, \{2,5,8\}\}$ & $\{\{0,1,8\}, \{3,4,5\}, \{6,7,2\}\}$ \\ \cline{2-4}
        & $x+1$ & $\{\{1,4,7\}, \{2,5,8\}, \{3,6,0\}\}$ & $\{\{1,2,0\}, \{4,5,6\}, \{7,8,3\}\}$ \\ \cline{2-4}
        & $x+2$ & $\{\{2,5,8\}, \{3,6,0\}, \{4,7,1\}\}$ & $\{\{2,3,1\}, \{5,6,7\}, \{8,0,4\}\}$ \\ \cline{2-4}
        & $x+3$ & $\{\{3,6,0\}, \{4,7,1\}, \{5,8,2\}\}$ & $\{\{3,4,2\}, \{6,7,8\}, \{0,1,5\}\}$ \\ \cline{2-4}
        & $x+4$ & $\{\{4,7,1\}, \{5,8,2\}, \{6,0,3\}\}$ & $\{\{4,5,3\}, \{7,8,0\}, \{1,2,6\}\}$ \\ \cline{2-4}
        & $x+5$ & $\{\{5,8,2\}, \{6,0,3\}, \{7,1,4\}\}$ & $\{\{5,6,4\}, \{8,0,1\}, \{2,3,7\}\}$ \\ \cline{2-4}
        & $x+6$ & $\{\{6,0,3\}, \{7,1,4\}, \{8,2,5\}\}$ & $\{\{6,7,5\}, \{0,1,2\}, \{3,4,8\}\}$ \\ \cline{2-4}
        & $x+7$ & $\{\{7,1,4\}, \{8,2,5\}, \{0,3,6\}\}$ & $\{\{7,8,6\}, \{1,2,3\}, \{4,5,0\}\}$ \\ \cline{2-4}
        & $x+8$ & $\{\{8,2,5\}, \{0,3,6\}, \{1,4,7\}\}$ & $\{\{8,0,7\}, \{2,3,4\}, \{5,6,1\}\}$ \\ \hline
        \hline
           \multirow{9}{*}{2} 
        & $2x+0$ & $\{\{0,6,3\}, \{2,8,5\}, \{4,1,7\}\}$ & $\{\{0,2,7\}, \{6,8,1\}, \{3,5,4\}\}$ \\ \cline{2-4} 
         & $2x+1$ & $\{\{1,7,4\}, \{3,0,6\}, \{5,2,8\}\}$ & $\{\{1,3,8\}, \{7,0,2\}, \{4,6,5\}\}$ \\ \cline{2-4}
         & $2x+2$ & $\{\{2,8,5\}, \{4,1,7\}, \{6,3,0\}\}$ & $\{\{2,4,0\}, \{8,1,3\}, \{5,7,6\}\}$ \\ \cline{2-4}
          & $2x+3$ & $\{\{3,0,6\}, \{5,2,8\}, \{7,4,1\}\}$ & $\{\{3,5,1\}, \{0,2,4\}, \{6,8,7\}\}$ \\ \cline{2-4}
          & $2x+4$ & $\{\{4,1,7\}, \{6,3,0\}, \{8,5,2\}\}$ & $\{\{4,6,2\}, \{1,3,5\}, \{7,0,8\}\}$ \\ \cline{2-4}
           & $2x+5$ & $\{\{5,2,8\}, \{7,4,1\}, \{0,6,3\}\}$ & $\{\{5,7,3\}, \{2,4,6\}, \{8,1,0\}\}$ \\ \cline{2-4}
            & $2x+6$ & $\{\{6,3,0\}, \{8,5,2\}, \{1,7,4\}\}$ & $\{\{6,8,4\}, \{3,5,7\}, \{0,2,1\}\}$ \\ \cline{2-4}
             & $2x+7$ & $\{\{7,4,1\}, \{0,6,3\}, \{2,8,5\}\}$ & $\{\{7,0,5\}, \{4,6,8\}, \{1,3,2\}\}$ \\ \cline{2-4}
             & $2x+8$ & $\{\{8,5,2\}, \{1,7,4\}, \{3,0,6\}\}$ & $\{\{8,1,6\}, \{5,7,0\}, \{2,4,3\}\}$ \\ \cline{2-4}
        \hline \hline
         \multirow{9}{*}{4} 
        & $4x+0$ & $\{\{0,3,6\}, \{4,7,1\}, \{8,2,5\}\}$ & $\{\{0,4,5\}, \{3,7,2\}, \{6,1,8\}\}$ \\ \cline{2-4}
        & $4x+1$ & $\{\{1,4,7\}, \{5,8,2\}, \{0,3,6\}\}$ & $\{\{1,5,6\}, \{4,8,3\}, \{7,2,0\}\}$ \\ \cline{2-4}
        & $4x+2$ & $\{\{2,5,8\}, \{6,0,3\}, \{1,4,7\}\}$ & $\{\{2,6,7\}, \{5,0,4\}, \{8,3,1\}\}$ \\ \cline{2-4}
        & $4x+3$ & $\{\{3,6,0\}, \{7,1,4\}, \{2,5,8\}\}$ & $\{\{3,7,8\}, \{6,1,5\}, \{0,4,2\}\}$ \\ \cline{2-4}
        & $4x+4$ & $\{\{4,7,1\}, \{8,2,5\}, \{3,6,0\}\}$ & $\{\{4,8,0\}, \{7,2,6\}, \{1,5,3\}\}$ \\ \cline{2-4}
        & $4x+5$ & $\{\{5,8,2\}, \{0,3,6\}, \{4,7,1\}\}$ & $\{\{5,0,1\}, \{8,3,7\}, \{2,6,4\}\}$ \\ \cline{2-4}
         & $4x+6$ & $\{\{6,0,3\}, \{1,4,7\}, \{5,8,2\}\}$ & $\{\{6,1,2\}, \{0,4,8\}, \{3,7,5\}\}$ \\ \cline{2-4}
        & $4x+7$ & $\{\{7,1,4\}, \{2,5,8\}, \{6,0,3\}\}$ & $\{\{7,2,3\}, \{1,5,0\}, \{4,8,6\}\}$ \\ \cline{2-4}
         & $4x+8$ & $\{\{8,2,5\}, \{3,6,0\}, \{7,1,4\}\}$ & $\{\{8,3,4\}, \{2,6,1\}, \{5,0,7\}\}$ \\ \cline{2-4}
        \hline \hline
         \multirow{9}{*}{5} 
        & $5x+0$ & $\{\{0,6,3\}, \{5,2,8\}, \{1,7,4\}\}$ & $\{\{0,5,4\}, \{6,2,7\}, \{3,8,1\}\}$ \\ \cline{2-4}
        & $5x+1$  & $\{\{1,7,4\}, \{6,3,0\}, \{2,8,5\}\}$ & $\{\{1,6,5\}, \{7,3,8\}, \{4,0,2\}\}$ \\ \cline{2-4}
        & $5x+2$  & $\{\{2,8,5\}, \{7,4,1\}, \{3,0,6\}\}$ & $\{\{2,7,6\}, \{8,4,0\}, \{5,1,3\}\}$ \\ \cline{2-4}
        & $5x+3$  & $\{\{3,0,6\}, \{8,5,2\}, \{4,1,7\}\}$ & $\{\{3,8,7\}, \{0,5,1\}, \{6,2,4\}\}$ \\ \cline{2-4}
        & $5x+4$  & $\{\{4,1,7\}, \{0,6,3\}, \{5,2,8\}\}$ & $\{\{4,0,8\}, \{1,6,2\}, \{7,3,5\}\}$ \\ \cline{2-4}
        & $5x+5$  & $\{\{5,2,8\}, \{1,7,4\}, \{6,3,0\}\}$ & $\{\{5,1,0\}, \{2,7,3\}, \{8,4,6\}\}$ \\ \cline{2-4}
        & $5x+6$  & $\{\{6,3,0\}, \{2,8,5\}, \{7,4,1\}\}$ & $\{\{6,2,1\}, \{3,8,4\}, \{0,5,7\}\}$ \\ \cline{2-4}
         & $5x+7$  & $\{\{7,4,1\}, \{3,0,6\}, \{8,5,2\}\}$ & $\{\{7,3,2\}, \{4,0,5\}, \{1,6,8\}\}$ \\ \cline{2-4}
       & $5x+8$  & $\{\{8,5,2\}, \{4,1,7\}, \{0,6,3\}\}$ & $\{\{8,4,3\}, \{5,1,6\}, \{2,7,0\}\}$ \\ \cline{2-4}
        \hline \hline \multirow{9}{*}{7} 
        & $7x+0$ & $\{\{0,3,6\}, \{7,1,4\}, \{5,8,2\}\}$ & $\{\{0,7,2\}, \{3,1,8\}, \{6,4,5\}\}$ \\ \cline{2-4}
        & $7x+1$ & $\{\{1,4,7\}, \{8,2,5\}, \{6,0,3\}\}$ & $\{\{1,8,3\}, \{4,2,0\}, \{7,5,6\}\}$ \\ \cline{2-4}
         & $7x+2$ & $\{\{2,5,8\}, \{0,3,6\}, \{7,1,4\}\}$ & $\{\{2,0,4\}, \{5,3,1\}, \{8,6,7\}\}$ \\ \cline{2-4}
        & $7x+3$ & $\{\{3,6,0\}, \{1,4,7\}, \{8,2,5\}\}$ & $\{\{3,1,5\}, \{6,4,2\}, \{0,7,8\}\}$ \\ \cline{2-4}
     & $7x+4$ & $\{\{4,7,1\}, \{2,5,8\}, \{0,3,6\}\}$ & $\{\{4,2,6\}, \{7,5,3\}, \{1,8,0\}\}$ \\ \cline{2-4}
       & $7x+5$ & $\{\{5,8,2\}, \{3,6,0\}, \{1,4,7\}\}$ & $\{\{5,3,7\}, \{8,6,4\}, \{2,0,1\}\}$ \\ \cline{2-4}
       & $7x+6$ & $\{\{6,0,3\}, \{4,7,1\}, \{2,5,8\}\}$ & $\{\{6,4,8\}, \{0,7,5\}, \{3,1,2\}\}$ \\ \cline{2-4}
      & $7x+7$ & $\{\{7,1,4\}, \{5,8,2\}, \{3,6,0\}\}$ & $\{\{7,5,0\}, \{1,8,6\}, \{4,2,3\}\}$ \\ \cline{2-4}
       & $7x+8$ & $\{\{8,2,5\}, \{6,0,3\}, \{4,7,1\}\}$ & $\{\{8,6,1\}, \{2,0,7\}, \{5,3,4\}\}$\\ \hline
        \hline
         \multirow{9}{*}{8} 
        & $8x+0$ & $\{\{0,6,3\}, \{8,5,2\}, \{7,4,1\}\}$ & $\{\{0,8,1\}, \{6,5,4\}, \{3,2,7\}\}$ \\ \cline{2-4}
        & $8x+1$ & $\{\{1,7,4\}, \{0,6,3\}, \{8,5,2\}\}$ & $\{\{1,0,2\}, \{7,6,5\}, \{4,3,8\}\}$ \\ \cline{2-4}
        & $8x+2$ & $\{\{2,8,5\}, \{1,7,4\}, \{0,6,3\}\}$ & $\{\{2,1,3\}, \{8,7,6\}, \{5,4,0\}\}$ \\ \cline{2-4}
        & $8x+3$ & $\{\{3,0,6\}, \{2,8,5\}, \{1,7,4\}\}$ & $\{\{3,2,4\}, \{0,8,7\}, \{6,5,1\}\}$ \\ \cline{2-4}
        &  $8x+4$ & $\{\{4,1,7\}, \{3,0,6\}, \{2,8,5\}\}$ & $\{\{4,3,5\} ,\{1,0,8\}, \{7,6,2\}\}$ \\ \cline{2-4}
        & $8x+5$ & $\{\{5,2,8\}, \{4,1,7\}, \{3,0,6\}\}$ & $\{\{5,4,6\} ,\{2,1,0\}, \{8,7,3\}\}$ \\ \cline{2-4}
        & $8x+6$ & $\{\{6,3,0\}, \{5,2,8\}, \{4,1,7\}\}$ & $\{\{6,5,7\} ,\{3,2,1\}, \{0,8,4\}\}$ \\ \cline{2-4}
        & $8x+7$ & $\{\{7,4,1\}, \{6,3,0\}, \{5,2,8\}\}$ & $\{\{7,6,8\} ,\{4,3,2\}, \{1,0,5\}\}$ \\ \cline{2-4}
        & $8x+8$ & $\{\{8,5,2\}, \{7,4,1\}, \{6,3,0\}\}$ & $\{\{8,7,0\} ,\{5,4,3\}, \{2,1,6\}\}$ \\ \hline
\caption{A collection of $(9,3,3)$-MBPDs obtained from the affine automorphisms of $\mathbb{Z}_9$. For each affine automorphism $\tau_{c,d}(x)=cx+d \pmod 9$, where $c\in\mathbb{Z}_9^{\times}$ and $d\in\mathbb{Z}_9$, the corresponding row gives a distinct $(9,3,3)$-MBPD. Here, $\mathcal{A}=\{A_1,A_2,A_3\}$ and $\mathcal{B}=\{B_1,B_2,B_3\}$ are partitions of $\mathbb{Z}_9$ into three blocks of size $3$.} 
\label{tab:cfg}
\end{longtable}
\end{center}

\begin{proposition}\label{prop:MBPDauto}
Let $\kappa$ and $\lambda$ be two odd integers, and let
$(X,\mathcal A,\mathcal B)$ be a
$(\nu,\kappa,\lambda)$-modular balanced partition design over
$\mathbb Z_{\nu}$, where $\nu=\kappa\lambda$, obtained from the subgroup
magic rectangle in \textnormal{Theorem~6~\cite{karthik2025existence}}. Let
\(
\mathcal A=\{A_i:i\in[1,\kappa]\}.
\) and \(
\mathcal B=\{B_j:j\in[1,\lambda]\}.
\)
Fix a block $A_i\in\mathcal A$. For each $\ell\in[1,\lambda]$, define
\(
\psi_\ell(\mathcal B)
=
\{\psi_\ell(B):B\in\mathcal B\},
\)
where
\(
\psi_\ell(B)
=
(B\cap A_i)\cup
\{(x+\ell\lambda)\bmod{\nu}:x\in B\setminus A_i\}.
\)
Then
\(
(X,\mathcal A,\psi_\ell(\mathcal B))
\)
is also a $(\nu,\kappa,\lambda)$-modular balanced partition design.
\end{proposition}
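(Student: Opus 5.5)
The approach is to check the three conditions of Definition~\ref{def1} for $(X,\mathcal A,\psi_\ell(\mathcal B))$ one at a time. Condition (i) is inherited unchanged, since $\mathcal A$ is not modified. The argument will rest on one structural property of the design coming from Theorem~6 of \cite{karthik2025existence}, with $\kappa,\lambda$ odd:
\begin{center}
\emph{($\ast$) every block $A_{i'}\in\mathcal A$ is invariant under translation by $\ell\lambda$ modulo $\nu$.}
\end{center}
This says that each row of the underlying subgroup magic rectangle is a union of cosets of $\langle \ell\lambda\rangle$; in the $(9,3,3)$ example the rows are exactly the cosets of $\langle 3\rangle$. I would establish ($\ast$) first, by writing out the explicit entries $\zeta_{ij}$ of the rectangle from \cite{karthik2025existence} together with the correspondence of Lemma~\ref{lemmamain}. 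I expect this to be the main obstacle, because it depends entirely on the concrete form of that construction. If the rows turn out to be exactly cosets of $\langle \lambda\rangle$ or of a subgroup containing it, the property follows at once. Otherwise I would try to show that translation by $\lambda$ permutes the cells within each row, and induct on $\ell$.

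Granting ($\ast$), the partition and intersection properties follow directly. Translation $x\mapsto x+\ell\lambda$ is a bijection of $X$ that fixes $A_i$ and every $A_{i'}$ setwise. Hence $X\setminus A_i$ is mapped onto itself. The sets $\psi_\ell(B_j)$ are therefore pairwise disjoint, each has size $\kappa$, and their union is $A_i\cup (X\setminus A_i)=X$, so $\psi_\ell(\mathcal B)$ partitions $X$. For condition (iii), take $i'\neq i$. Then
\[
\psi_\ell(B_j)\cap A_{i'}=\bigl((B_j\cap A_{i'})+\ell\lambda\bigr),
\]
using $A_{i'}+\ell\lambda=A_{i'}$, so this intersection has exactly one element. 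For $i'=i$ the intersection is $B_j\cap A_i$, which is unchanged.

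For condition (ii), each of the $\kappa-1$ elements of $B_j\setminus A_i$ is shifted by $\ell\lambda$. This gives
\[
\sum_{b\in\psi_\ell(B_j)} b\equiv j\kappa+(\kappa-1)\ell\lambda \pmod{\nu},
\]
so every block sum is moved by the same constant $c=(\kappa-1)\ell\lambda$. The sets $\{j\kappa:j\in[1,\lambda]\}$ and $\{j\kappa+c:j\in[1,\lambda]\}$ coincide if and only if $c\in\langle\kappa\rangle$. In that case, after re-indexing the blocks, the sum condition is recovered in the form required by Definition~\ref{def1}, as was done in Theorem~\ref{thm5.10}. I would verify $c\in\langle\kappa\rangle$ using the same structural information as in ($\ast$), because these shifts stay inside rows and hence inside the relevant cosets. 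If that direct route fails, I would instead compare with the row-sum argument: the $\mathcal A$-sums are untouched, and Proposition~\ref{lemma1} forces the total of the new column sums to be congruent to $\sum_{x\in X}x$. That congruence restricts $c$ modulo $\nu$, and I would then try to upgrade the restriction to membership in $\langle\kappa\rangle$ using the fact that $\kappa$ is odd.
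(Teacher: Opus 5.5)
Your plan follows the paper's route. The paper's proof also reduces everything to the claim that, ``by the construction in Theorem~6'', translation by $\ell\lambda$ preserves the blocks of $\mathcal A$, and then argues partition, intersection and sums as you do. The genuine gap is the step you flagged, ($\ast$). It is not merely unproved but false in the stated generality, so unpacking the construction of \cite{karthik2025existence} cannot close it.

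Already for the fixed block and $\ell=1$ it fails. $A_i+\lambda=A_i$ means $A_i$ is a union of cosets of $\langle\lambda\rangle\le\mathbb Z_\nu$, a subgroup of order $\nu/\gcd(\lambda,\nu)=\kappa$. Since $|A_i|=\lambda$, this forces $\kappa\mid\lambda$; for general $\ell$ one needs $\kappa/\gcd(\kappa,\ell)\mid\lambda$. This is a counting obstruction, independent of how the design was built.

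The failure is not confined to your method. The union of the sets $\psi_\ell(B_j)$ is $A_i\cup\bigl((X\setminus A_i)+\ell\lambda\bigr)=A_i\cup\bigl(X\setminus(A_i+\ell\lambda)\bigr)$. So whenever $A_i+\ell\lambda\neq A_i$, some element of $A_i$ arises twice, and $\psi_\ell(\mathcal B)$ is not a partition of $X$ into $\kappa$-sets. Hence for $\kappa=3$, $\lambda=5$, $\ell=1$ the conclusion fails for every $(15,3,5)$-$\MBPD$ over $\mathbb Z_{15}$. Concretely, take the coprime-case design $A_i=5i+\langle 3\rangle$, $B_j=3j+\langle 5\rangle$ with $A_0$ fixed; then $\psi_1(\{0,5,10\})=\{0,10\}$. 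The proposition needs an extra hypothesis, e.g.\ $\kappa\mid\lambda$ and every block of $\mathcal A$ a union of cosets of $\langle\lambda\rangle$. The $(9,3,3)$ and $(27,3,9)$ examples satisfy this: their blocks are cosets of $\langle\kappa\rangle\supseteq\langle\lambda\rangle$.

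Under such a hypothesis the rest of your argument is correct. On condition (ii) it is more careful than the paper's, which only observes that all block sums move by the same constant; as you note, one needs $c=(\kappa-1)\ell\lambda\in\langle\kappa\rangle$. This follows at once from your fallback, with no use of oddness. Both the old and the new column sums total $\sum_{x\in X}x$, so $\lambda c\equiv 0\pmod{\kappa\lambda}$, which is exactly $\kappa\mid c$.
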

\begin{proof}
By Theorem~6~\cite{karthik2025existence}, the blocks of $\mathcal A$ and
$\mathcal B$ are obtained from a subgroup magic rectangle over
$\mathbb Z_{\nu}$. Hence each block of $\mathcal B$ contains exactly one
element from every block of $\mathcal A$.
The map $\psi_\ell$ fixes the unique element in $A_i$ and translates every
other element by $\ell\lambda$ modulo $\nu$. By the construction in
Theorem~6, this translation preserves the partition in $\mathcal A$.
Consequently, the intersection of every block of $\psi_\ell(\mathcal B)$ with each block of $\mathcal A$ contains exactly one element. Since translation modulo $\nu$ is a bijection, $\psi_\ell(\mathcal B)$ is a
partition of $X$ into $\lambda$ blocks of size $\kappa$. Moreover, every
block sum is changed by the same constant modulo $\nu$, so the modular
balance condition is preserved. Therefore,
$(X,\mathcal A,\psi_\ell(\mathcal B))$ is again a
$(\nu,\kappa,\lambda)$-modular balanced partition design.
\end{proof}
\subsection{Affine Group Actions and Classification of $\MBPD$s on $\mathbb Z_\nu$}

In this section, a natural action of the affine automorphism group $\AG(\nu)$ on the set of all modular balanced partition designs over $\mathbb Z_\nu$, is described. Note that when one classifies designs, this action is a fundamental property to  understand an isomorphism, an orbit structure, and the reduction of the search space.
\begin{lemma}
\label{lem:AG-action}
The affine group $\AG(\nu)$ acts on the set of all modular balanced partition designs with point-set $\mathbb Z_\nu$ via
\(
\tau_{c,d}(\mathbb Z_\nu,\mathcal A,\mathcal B)
=
\bigl(\mathbb Z_n,\tau_{c,d}(\mathcal A),\tau_{c,d}(\mathcal B)\bigr),
\)
where
\(
\tau_{c,d}(\mathcal A)
=
\{\tau_{c,d}(A):A\in\mathcal A\}
~\text{and}~
\tau_{c,d}(\mathcal B)
=
\{\tau_{c,d}(B):B\in\mathcal B\}.
\)
For any modular balanced partition design
\(
\mathbb M=(\mathbb Z_\nu,\mathcal A,\mathcal B),
\)
the stabilizer of $\mathbb M$,
\(
\operatorname{Stab}_{\AG(\nu)}(\mathbb M)
=
\{\tau\in\AG(\nu):\tau(\mathbb M)=\mathbb M\}
\) 
is a subgroup of $\AG(\nu)$ under this action.
\end{lemma}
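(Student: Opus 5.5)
The plan is to verify the two group-action axioms and then show the stabilizer is a subgroup. Closure of the action is already given by Theorem~\ref{thm5.10}: for every $\tau_{c,d}\in\AG(\nu)$ and every modular balanced partition design $\mathbb M=(\mathbb Z_\nu,\mathcal A,\mathcal B)$, the triple $\tau_{c,d}(\mathbb M)$ is again a modular balanced partition design on $\mathbb Z_\nu$. So the rule is a well-defined map $\AG(\nu)\times\mathcal M\to\mathcal M$, where $\mathcal M$ is the set of all such designs on $\mathbb Z_\nu$.

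We regard $\mathcal A$ and $\mathcal B$ as unordered sets of blocks, as in the isomorphism definition. The relabelling of indices needed for the congruences $\sum_{a\in A_i}a\equiv i\lambda$ is exactly what the proof of Theorem~\ref{thm5.10} supplies. This is the one point needing care. If designs were treated as indexed families, the action would have to carry the induced re-indexing. I will take the unordered convention, under which two designs are equal exactly when their block sets coincide.

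\textbf{Action axioms.} First, $\tau_{1,0}=\mathrm{id}_{\mathbb Z_\nu}$, so $\tau_{1,0}(A)=A$ for every block and $\tau_{1,0}(\mathbb M)=\mathbb M$. Second, compatibility with composition: a direct computation gives
\[
\tau_{c,d}\circ\tau_{c',d'}(x)=c(c'x+d')+d=\tau_{cc',\,cd'+d}(x).
\]
Since images of sets under maps compose, $(\tau\circ\tau')(A)=\tau(\tau'(A))$ for each block $A$. Applying this blockwise to $\mathcal A$ and $\mathcal B$ yields $(\tau\circ\tau')(\mathbb M)=\tau(\tau'(\mathbb M))$.

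\textbf{Stabilizer.} With the action in hand, the stabilizer claim is the standard fact recorded in Theorem~\ref{ost}(a), and I would also check it directly. The identity lies in $\operatorname{Stab}_{\AG(\nu)}(\mathbb M)$. If $\tau,\tau'$ fix $\mathbb M$, the composition rule gives $(\tau\circ\tau')(\mathbb M)=\tau(\mathbb M)=\mathbb M$. For inverses, apply $\tau^{-1}$ to both sides of $\tau(\mathbb M)=\mathbb M$ and use the two axioms to get $\mathbb M=\tau^{-1}(\mathbb M)$. Hence the stabilizer is a subgroup of $\AG(\nu)$.

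The main obstacle is the well-definedness issue of the first step, namely making the indexing convention explicit. Once that is settled, the remaining steps are routine.
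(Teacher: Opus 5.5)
Your proposal is correct and follows the same route as the paper: well-definedness via Theorem~\ref{thm5.10}, the identity and compatibility axioms, and the stabilizer being a subgroup by the standard fact (Theorem~\ref{ost}(a)), which you also check directly. Making the block-indexing convention explicit is a useful point the paper leaves implicit, and it causes no trouble either way, since the residues $i\lambda \bmod \nu$ for $i\in[1,\kappa]$ are distinct and so each block's index is forced by its sum.
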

\begin{proof}
Let
\(
\mathbb M=(\mathbb Z_\nu,\mathcal A,\mathcal B)
\) be a modular balanced partition design.
By Theorem~\ref{thm5.10}, for every
$\tau_{c,d}\in \AG(\nu)$,
the image
\(
\tau_{c,d}(\mathbb M)
\)
is again a modular balanced partition design. Hence, the action is well-defined.
Suppose if $\tau_{c,d},\tau_{e,f}\in\AG(\nu)$, then 
\[
\tau_{c,d}\bigl(\tau_{e,f}(\mathbb M)\bigr)
=
(\tau_{c,d}\circ\tau_{e,f})(\mathbb M),
\]
so the compatibility condition for a group action holds. Moreover, the identity element
$\tau_{1,0}\in\AG(\nu)$
satisfies
\(
\tau_{1,0}(\mathbb M)=\mathbb M.
\)
Therefore, $\AG(\nu)$ acts on the set of all modular balanced partition designs with point-set $\mathbb Z_\nu$. Now, the stabilizer of $\mathbb M$ is
\[
\operatorname{Stab}_{\AG(\nu)}(\mathbb M)
=
\{\tau\in\AG(\nu):\tau(\mathbb M)=\mathbb M\}.
\]
Since the stabilizer of any element under a group action is a subgroup of the acting group, it follows that
\(
\operatorname{Stab}_{\AG(\nu)}(\mathbb M)
\leq \AG(\nu).
\)
\end{proof}

\begin{theorem}
\label{thm:iso-count}
Let $\mathcal S$ denote the finite set of all $\MBPD$s over $\mathbb Z_\nu$. Then the number of non-isomorphic $\MBPD$s over $\mathbb Z_\nu$ is equal to the number of orbits of the natural action of $\AG(\nu)$ on $\mathcal S$. In addition,
\[
\#\{\text{non-isomorphic $\MBPD$s on }\mathbb Z_\nu\}
=
\frac{1}{|\AG(\nu)|}
\sum_{\tau\in \AG(\nu)}
|\operatorname{Fix}(\tau)|,
\]
where
\(
\operatorname{Fix}(\tau)
=
\{\mathbb M\in\mathcal S:\tau(\mathbb M)=\mathbb M\}
\) is the set of designs fixed by the affine automorphism $\tau$ In particular, if every $\MBPD$ in $\mathcal S$ has trivial stabilizer in $\AG(\nu)$, then
\[
\#\{\text{non-isomorphic $\MBPD$s on }\mathbb Z_\nu\}
=
\frac{|\mathcal S|}{|\AG(\nu)|}.
\]
\end{theorem}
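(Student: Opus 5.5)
The plan has three parts: identify the notion of isomorphism being counted, apply Burnside's Lemma (Lemma~\ref{lem:burnside}) to the action from Lemma~\ref{lem:AG-action}, and handle the trivial-stabilizer case with the Orbit-Stabilizer Theorem (Theorem~\ref{ost}).

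\textbf{Step 1: the isomorphism relation.} I will take ``non-isomorphic'' to mean \emph{affinely} non-isomorphic. Two designs $\mathbb M,\mathbb M'\in\mathcal S$ are identified exactly when $\mathbb M'=\tau(\mathbb M)$ for some $\tau\in\AG(\nu)$. An arbitrary bijection of $\mathbb Z_\nu$ need not preserve the modular conditions, whereas Theorem~\ref{thm5.10} guarantees that affine maps do; this is the reason for restricting to $\AG(\nu)$.

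By Lemma~\ref{lem:AG-action}, $\AG(\nu)$ acts on $\mathcal S$. Hence the relation ``$\mathbb M\sim\mathbb M'$ iff $\mathbb M'=\tau(\mathbb M)$ for some $\tau\in\AG(\nu)$'' is an equivalence relation:
\begin{itemize}
\item reflexivity uses $\tau_{1,0}$;
\item symmetry uses inverses in $\AG(\nu)$;
\item transitivity uses the compatibility $\tau_{c,d}\circ\tau_{e,f}$.
\end{itemize}
Its equivalence classes are precisely the $\AG(\nu)$-orbits on $\mathcal S$. So the number of isomorphism classes equals $|\mathcal S/\AG(\nu)|$. Since $\mathcal S$ consists of pairs of partitions of a finite set, it is finite.

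\textbf{Step 2: Burnside's count.} Apply Lemma~\ref{lem:burnside} with the finite group $\Gamma=\AG(\nu)$ acting on $X=\mathcal S$. Writing the action multiplicatively instead of with ``$+$'', this gives directly
\[
|\mathcal S/\AG(\nu)|=\frac{1}{|\AG(\nu)|}\sum_{\tau\in\AG(\nu)}|\operatorname{Fix}(\tau)|,
\]
where $\operatorname{Fix}(\tau)$ is exactly the set defined in the statement.

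\textbf{Step 3: the trivial-stabilizer case.} Assume every $\mathbb M\in\mathcal S$ has trivial stabilizer. By Theorem~\ref{ost}, each orbit is then in bijection with the cosets of the trivial subgroup, so every orbit has size exactly $|\AG(\nu)|$. Since $\mathcal S$ is the disjoint union of its orbits, $|\mathcal S|=(\#\text{orbits})\cdot|\AG(\nu)|$, which gives the displayed quotient.

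As a check, the same conclusion follows from Step~2: a trivial stabilizer means $\operatorname{Fix}(\tau)=\emptyset$ for $\tau\neq\tau_{1,0}$, while $\operatorname{Fix}(\tau_{1,0})=\mathcal S$. The sum therefore reduces to $|\mathcal S|$.

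\textbf{Main obstacle.} The only delicate point is Step~1: the count is valid for isomorphism under $\AG(\nu)$, not under the full symmetric group. If the paper's definition of isomorphism (an arbitrary bijection) were intended, the statement would need justification that every isomorphism between MBPDs on $\mathbb Z_\nu$ is affine, and that is false in general. I will therefore state explicitly that the classes counted are affine-isomorphism classes.
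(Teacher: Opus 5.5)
Your argument is correct and is essentially the paper's: identify isomorphism classes with the $\AG(\nu)$-orbits on $\mathcal S$, count the orbits with Burnside's Lemma~\ref{lem:burnside}, and obtain $|\mathcal S|/|\AG(\nu)|$ from the Orbit--Stabilizer Theorem~\ref{ost} when every stabilizer is trivial.

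The only difference is your Step~1. There the paper simply asserts that two $\MBPD$s are isomorphic if and only if they lie in the same $\AG(\nu)$-orbit, and your caveat is justified. Under the paper's bijection-based definition, any two $(\nu,\kappa,\lambda)$-$\MBPD$s are isomorphic, since condition (iii) identifies $X$ with $\mathcal A\times\mathcal B$. Affine maps do not achieve this. On $\mathbb Z_9$, take the design with $\mathcal A$ the cosets of $\langle 3\rangle$ and $\mathcal B=\{\{0,1,8\},\{3,4,5\},\{2,6,7\}\}$. Every affine image of it again has this coset partition as $\mathcal A$, so it is not affinely equivalent to the $\MBPD$ with $\mathcal A=\{\{0,1,2\},\{3,4,8\},\{5,6,7\}\}$ and $\mathcal B=\{\{0,3,6\},\{1,4,7\},\{2,5,8\}\}$. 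Hence the formula counts affine-equivalence classes, exactly as you state.
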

\begin{proof}
Two $\MBPD$s are isomorphic if and only if they belong to the same orbit under the action of $\AG(\nu)$ on $\mathcal S$. Therefore, the set of all isomorphic classes of $\MBPD$s is precisely the orbit space $\mathcal S/\AG(\nu)$.
Since both $\mathcal S$ and $\AG(\nu)$ are finite, by Burnside's Lemma \ref{lem:burnside}, the number of orbits is
$
|\mathcal S/\AG(\nu)|
=
\frac{1}{|\AG(\nu)|}
\sum_{\tau\in \AG(\nu)}
|\operatorname{Fix}(\tau)|,
$
where
\(
\operatorname{Fix}(\tau)
=
\{\mathbb M\in\mathcal S:\tau(\mathbb M)=\mathbb M\}.
\)
This, proves the stated formula. If every $\MBPD$ has trivial stabilizer in $\AG(\nu)$, then by Orbit--Stabilizer Theorem \ref{ost} each orbit has cardinality
$
\frac{|\AG(\nu)|}
{|\operatorname{Stab}_{\AG(\nu)}(\mathbb M)|}
=
|\AG(\nu)|.$
Since the set of all orbits partition $\mathcal S$, the number of non-isomorphic $\MBPD$s on $\mathbb{Z}_\nu$ is 
\[
|\mathcal S/\AG(\nu)|
=
\frac{|\mathcal S|}{|\AG(\nu)|}.
\]
 
\end{proof}

\begin{lemma}
\label{thm:orbit-count}
If $\mathbb M=(\mathbb Z_\nu,\mathcal A,\mathcal B)$ is an $\MBPD$, then the orbit of $\mathbb M$ under $\AG(\nu)$ has size
\[
|\AG(\nu)\mathbb M|
= \frac{|\AG(\nu)|}{\big|\operatorname{Stab}_{\AG(\nu)}(\mathbb M)\big|}.
\]
\end{lemma}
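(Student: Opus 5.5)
This is the Orbit--Stabilizer Theorem applied to the action described in Lemma~\ref{lem:AG-action}. By that lemma, $\AG(\nu)$ acts on the finite set $\mathcal S$ of all $\MBPD$s with point-set $\mathbb Z_\nu$, and $\operatorname{Stab}_{\AG(\nu)}(\mathbb M)$ is a subgroup of $\AG(\nu)$. The plan is to make the bijection in Theorem~\ref{ost}(b) explicit and then count.

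Write $G=\AG(\nu)$ and $H=\operatorname{Stab}_{G}(\mathbb M)$. Define $\Phi$ from the set of left cosets $\{\tau H:\tau\in G\}$ to the orbit $G\mathbb M$ by $\Phi(\tau H)=\tau(\mathbb M)$.

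For well-definedness and injectivity, I will use the compatibility condition $\tau_{c,d}(\tau_{e,f}(\mathbb M))=(\tau_{c,d}\circ\tau_{e,f})(\mathbb M)$ from Lemma~\ref{lem:AG-action}, together with the fact that $\tau^{-1}\in G$ undoes $\tau$. With these,
\[
\tau(\mathbb M)=\sigma(\mathbb M)\iff(\sigma^{-1}\circ\tau)(\mathbb M)=\mathbb M\iff \sigma^{-1}\circ\tau\in H\iff \tau H=\sigma H .
\]
Read left to right, this gives injectivity; read right to left, it gives well-definedness. Surjectivity is immediate, since every element of the orbit has the form $\tau(\mathbb M)$.

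Finally, the left cosets of $H$ partition $G$ into blocks of size $|H|$, so there are $|G|/|H|$ of them by Lagrange's theorem. Hence $|G\mathbb M|=|G|/|H|$, and substituting $|G|=\nu\varphi(\nu)$ from the earlier Note gives the orbit size explicitly if desired.

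The only point needing care is that the paper's Theorem~\ref{ost}(b) is phrased with right cosets while composition here is written on the left. In an arbitrary group the numbers of left and right cosets agree, via $\tau H\mapsto H\tau^{-1}$, so either convention yields the same count. I do not expect any genuine obstacle beyond checking this convention.
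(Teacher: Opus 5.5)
Your proposal is correct and follows the paper's route. The paper simply invokes the Orbit--Stabilizer Theorem~\ref{ost} for the action of $\AG(\nu)$ from Lemma~\ref{lem:AG-action}, while you make the coset--orbit bijection and the Lagrange count explicit, and your remark that the left/right coset convention does not change the count is also correct.
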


\begin{proof}
The result follows from  Orbit--Stabilizer Theorem \ref{ost} and Lemma~\ref{lem:AG-action}.
\end{proof}

\paragraph{Search Reduction.}
When performing computational or manual classification, it suffices to inspect one representative from each $AG(\nu)$-orbit.  
Designs in the same orbit are isomorphic via affine maps, so selecting one per orbit yields a complete set of isomorphism types while reducing the search space by the factor given in Lemma~\ref{thm:orbit-count}.

\begin{proposition}
\label{prop:translation-regular}
Let $T=\{\tau_{1,b}:b\in\mathbb Z_\nu\}$ be the subgroup of translations.  
If a $\MBPD$ $\mathbb M=(\mathbb Z_\nu,\mathcal A,\mathcal B)$ is invariant under $T$, then $\mathbb M$ is a cyclic design.  
That is, there exist sets of base blocks $\mathcal R\subseteq\mathcal A$ and $\mathcal S\subseteq\mathcal B$ such that
\(
\mathcal A=\{\tau_{1,b}(R): R\in\mathcal R, \; b\in\mathbb Z_\nu\}, \textnormal{~and~}
\mathcal B=\{\tau_{1,b}(S): S\in\mathcal S, \; b\in\mathbb Z_\nu\}.
\)
\end{proposition}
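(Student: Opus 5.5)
The approach is to read ``invariant under $T$'' as: for every $b\in\mathbb Z_\nu$, $\tau_{1,b}(\mathbb M)=\mathbb M$, i.e.\ $T\le\operatorname{Stab}_{\AG(\nu)}(\mathbb M)$ in the sense of Lemma~\ref{lem:AG-action}. Then $T$ is a cyclic group $\langle\tau_{1,1}\rangle$ of order $\nu$ contained in $\operatorname{Aut}(\mathbb Z_\nu,\mathcal A,\mathcal B)$. The plan is to deduce the statement from Theorem~\ref{thm:development} with $\mathcal C=T$ and $\tau=\tau_{1,1}$, noting that $\tau_{1,1}^i=\tau_{1,i}$.

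\textbf{Step 1: regularity.} First check that $T$ acts regularly on $\mathbb Z_\nu$. For $x,y\in\mathbb Z_\nu$, the element $\tau_{1,b}$ with $b=y-x$ is the unique translation sending $x$ to $y$. Also $|T|=\nu$, since $\tau_{1,b}=\tau_{1,b'}$ forces $b=b'$.

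\textbf{Step 2: orbit decomposition.} By Lemma~\ref{lem:aut-action-blocks}, $T$ permutes $\mathcal A$ and also permutes $\mathcal B$. Hence $\mathcal A$ is the disjoint union of its $T$-orbits, and likewise for $\mathcal B$. Choose $\mathcal R\subseteq\mathcal A$ and $\mathcal S\subseteq\mathcal B$ to contain exactly one block from each orbit. Every $A\in\mathcal A$ then has the form $\tau_{1,b}(R)$ for some $R\in\mathcal R$ and $b\in\mathbb Z_\nu$. Conversely, every such image lies in $\mathcal A$ by invariance, which gives the displayed equalities; the argument for $\mathcal B$ is identical. This is the content of Theorem~\ref{thm:development}, and it needs no statement about orbit sizes. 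I would avoid quoting the claim there that every orbit has size $\nu$.

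\textbf{Main obstacle.} The only delicate point is the meaning of ``cyclic design'' and whether orbit lengths matter. The stabiliser of a block $R$ in $T$ may be nontrivial, so its orbit has size $\nu/|\operatorname{Stab}_T(R)|$ by Theorem~\ref{ost}. In fact it must be nontrivial: $|\mathcal A|=\kappa<\nu$ whenever $\lambda>1$. The listed family $\{\tau_{1,b}(R)\}$ therefore contains repetitions, but as a set it is still exactly the orbit, so the set equality is unaffected.

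\textbf{Consistency check.} I would add a remark that such designs do occur. Since the $A$-blocks partition $\mathbb Z_\nu$ and are permuted by translations, each block is a union of cosets of its stabiliser. So the orbit decomposition is compatible with the partition structure, and the proposition is in any case just the orbit decomposition.
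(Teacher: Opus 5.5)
Your proposal is correct and is essentially the paper's argument. You decompose $\mathcal A$ and $\mathcal B$ into $T$-orbits, choose one representative block per orbit, and use invariance to get the reverse inclusion. The regularity check in Step~1 and the detour through Theorem~\ref{thm:development} are unnecessary, since the paper argues directly from the orbit decomposition. You are also right not to rely on that theorem's claim that every orbit has size $\nu$, which fails because $|\mathcal A|=\kappa<\nu$ whenever $\lambda>1$.
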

\begin{proof}
Since $\mathbb M$ is invariant under the action of $T$, the collections $\mathcal A$ and $\mathcal B$ are unions of $T$-orbits. Let $\mathcal R\subseteq\mathcal A$ be a set containing exactly one representative from each $T$-orbit on $\mathcal A$, and let $\mathcal S\subseteq\mathcal B$ be a set containing exactly one representative from each $T$-orbit on $\mathcal B$.
Then every block of $\mathcal A$ belongs to the orbit of a unique representative $R\in\mathcal R$. Hence
\(
\mathcal A
=
\{\tau_{1,b}(R): R\in\mathcal R,\ b\in\mathbb Z_\nu\}.
\)
Similarly,
\(
\mathcal B
=
\{\tau_{1,b}(S): S\in\mathcal S,\ b\in\mathbb Z_\nu\}.
\)
Therefore, all blocks of $\mathcal A$ and $\mathcal B$ are obtained by developing the base blocks in $\mathcal R$ and $\mathcal S$ under the action of the translation group $T$. Consequently, $\mathbb M$ is a cyclic design.
\end{proof}

\begin{proposition}
\label{prop:mult-invariance}
Let $H\le\mathbb Z_\nu^\times$ be a multiplicative subgroup, and let  
$H_0=\{\tau_{a,0}: a\in H\}\le AG(\nu)$.  
If $H_0\subseteq\operatorname{Stab}_{AG(\nu)}(\mathcal D)$ for a $\MBPD$ $\mathcal D=(\mathbb Z_\nu,\mathcal A,\mathcal B)$, then every block of $\mathcal A$ and $\mathcal B$ is a union of $H$-orbits in $\mathbb Z_\nu$.  
Consequently, block sizes are sums of orbit sizes, and each block is invariant under the subgroup that stabilizes it.
\end{proposition}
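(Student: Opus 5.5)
The plan is to read everything off the induced permutation action of $H_0$ on the two block systems. By hypothesis $H_0\subseteq \operatorname{Stab}_{\AG(\nu)}(\mathcal D)$, so for every $a\in H$ we have $\tau_{a,0}(\mathcal A)=\mathcal A$ and $\tau_{a,0}(\mathcal B)=\mathcal B$. By Lemma~\ref{lem:aut-action-blocks}, $H_0$ therefore permutes the blocks of $\mathcal A$ and of $\mathcal B$.

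\emph{Step 1: $H_0$ fixes every block.} For each block $A\in\mathcal A$ and each $a\in H$ I want to show $\tau_{a,0}(A)=A$. The tool is the modular sum condition of Definition~\ref{def1}. The block sums $\sum_{x\in A_i}x\equiv i\lambda \pmod{\nu}$, $i\in[1,\kappa]$, are pairwise distinct modulo $\nu$, so a block of $\mathcal A$ is determined by its sum. The image block has sum
\[
\sum_{x\in \tau_{a,0}(A_i)}x\equiv a\,i\lambda \pmod{\nu}.
\]
Hence $\tau_{a,0}(A_i)=A_{i'}$ with $i'\lambda\equiv a i\lambda \pmod{\nu}$, that is, $i'\equiv ai \pmod{\kappa}$. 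The same computation for $\mathcal B$ gives $j'\equiv aj \pmod{\lambda}$.

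I expect this step to be the main obstacle. It gives $\tau_{a,0}(A_i)=A_i$ precisely when $(a-1)i\equiv 0 \pmod{\kappa}$, and this need not hold for every $i$. My first attempt at closing the gap would use condition (iii) together with the fixed point $0$. The blocks $A\ni 0$ and $B\ni 0$ are certainly fixed, since $\tau_{a,0}(0)=0$. Moreover, $\tau_{a,0}$ preserves the grid $A_i\cap B_j=\{\zeta_{ij}\}$, mapping $\zeta_{ij}$ to $\zeta_{ai,aj}$. I would try to combine this with the index congruences above to force the induced row and column permutations to be trivial.

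If that fails, I would fall back on the weaker but always valid version of the statement. Let $H_A=\{a\in H:\tau_{a,0}(A)=A\}$ be the stabiliser of $A$ in $H$, which is a subgroup by Theorem~\ref{ost}. Then one proves Steps 2 and 3 below with $H_A$ in place of $H$. This also matches the final clause of the statement, ``invariant under the subgroup that stabilizes it''.

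\emph{Step 2: blocks are unions of orbits.} Take $A$ with $\tau_{a,0}(A)=A$ for all $a$ in the relevant group ($H$, or $H_A$). For any $x\in A$, the whole orbit $\{ax:a\in H\}$ lies in $A$. Hence $A=\bigcup_{x\in A}Hx$. Orbits of a group action on $\mathbb Z_\nu$ are disjoint, so this is a disjoint union of $H$-orbits, and the same argument applies to every $B\in\mathcal B$.

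\emph{Step 3: the consequences.} Taking cardinalities gives $|A|=\lambda=\sum |Hx|$ over the orbits contained in $A$, and $|B|=\kappa$ likewise. By Theorem~\ref{ost}, each $|Hx|=[H:\operatorname{Stab}_H(x)]$ divides $|H|$. The last clause is immediate from the definition of the stabiliser of a block under the induced action of Lemma~\ref{lem:aut-action-blocks}.
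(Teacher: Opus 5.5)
Your Step~1 is where the argument breaks, and it cannot be repaired, because the statement as written is false. Your own congruence already shows this. By hypothesis $\tau_{a,0}(A_1)$ is a block of $\mathcal A$, and its sum is $a\lambda$. Since $a\lambda\equiv\lambda\pmod{\nu}$ if and only if $\kappa\mid a-1$, the block $A_1$ is moved as soon as $H$ contains some $a\not\equiv 1\pmod{\kappa}$. Such a block is then not $H$-invariant, so it is not a union of $H$-orbits. The same holds for $B_1$ and $a\not\equiv 1\pmod{\lambda}$. Nothing from condition (iii) or the fixed point $0$ can change this, so your first plan of forcing the induced row and column permutations to be trivial has no chance.

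Such $H$ do occur under the hypothesis. Take $\nu=15$, $\kappa=3$, $\lambda=5$, let $\mathcal A$ be the residue classes mod $3$ (block sums $0,5,10$), and let $\mathcal B$ be the residue classes mod $5$ (block sums $0,3,6,9,12$). This is the design given by the paper's coprime construction. Every unit of $\mathbb Z_{15}$ permutes both families, so $H_0\subseteq\operatorname{Stab}_{\AG(15)}(\mathcal D)$ for $H=\{1,14\}$. Yet the $H$-orbit $\{1,14\}$ meets both $\{1,4,7,10,13\}$ and $\{2,5,8,11,14\}$.

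The paper's proof makes exactly the leap you declined to make. It passes from ``$\tau_{a,0}$ fixes the design'', which means only $\tau_{a,0}(\mathcal A)=\mathcal A$ as a collection, to $\tau_{a,0}(A)=A$ for every block $A$. That confuses invariance of the partition with invariance of each part; the paper's own automorphism $x\mapsto x+1$ of the $(9,3,3)$ example already permutes blocks nontrivially.

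Your fallback is the correct salvage. Each block $A$ is a union of orbits of its setwise stabiliser $H_A\le H$, and your Steps~2 and~3 prove this version correctly. You can also state the sharp form using your Step~1 and the fact that a block is determined by its sum. Under the hypothesis, every block of $\mathcal A$ is a union of $H$-orbits if and only if $a\equiv 1\pmod{\kappa}$ for all $a\in H$. Likewise every block of $\mathcal B$ is a union of $H$-orbits if and only if $a\equiv 1\pmod{\lambda}$ for all $a\in H$. This is the extra hypothesis the proposition needs. It is not vacuous: for $\kappa=\lambda=n$ it allows the nontrivial subgroup $\{a\in\mathbb Z_{n^2}^{\times}: a\equiv 1 \pmod{n}\}$.
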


\begin{proof}
If $\tau_{a,0}$ fixes the design for each $a\in H$, then for any block $A\in\mathcal A$,
\(
\tau_{a,0}(A)=A.
\)
Thus $A$ is a union of $H$-orbits.  
The conclusion for block sizes follows immediately.
\end{proof}

% \begin{definition}
% Two $\MBPD$s $\mathbb{M}_1$ and $\mathbb{M}_2$ on $\mathbb Z_\nu$ are \emph{affine-equivalent} if there exists $\tau\in AG(\nu)$ such that 
% \[
% \tau\cdot  \mathbb{M}_1 = \mathbb{M}_2.
% \]
% \end{definition}

 \section{Modular balanced partition design: A tool to construct transversal design} 
In this section, we assume $X=\mathbb{Z}_{\nu}$, where $\nu=\kappa\lambda$ and
$\kappa$ and $\lambda$ are odd integers. Using a modular balanced partition
design, we construct a transversal design $TD(\kappa,\lambda)$ as follows.

\medskip

\noindent\textbf{Step 1.}
By Theorem~\ref{thmfre}, let
$(X,\mathcal A,\mathcal B)$ be a
$(\nu,\kappa,\lambda)$-modular balanced partition design, where
\(
\mathcal A=\{A_i:i\in[1,\kappa]\}
~\text{and}~
\mathcal B=\{B_j:j\in[1,\lambda]\}.
\)

\medskip

\noindent\textbf{Step 2.}
Fix the block $A_1\in\mathcal A$. For each integer
$\ell\in[1,\lambda]$, let
\(
(X,\mathcal A,\mathcal B_\ell)
\)
be the $(\nu,\kappa,\lambda)$-modular balanced partition design obtained from
Proposition~\ref{prop:MBPDauto}. Thus, we obtain $\lambda$ modular balanced
partition designs with block collections
\(
\mathcal B_1,\mathcal B_2,\ldots,\mathcal B_\lambda.
\)
\noindent Let
\[
\mathscr B=\bigcup_{\ell=1}^{\lambda}\mathcal B_\ell.
\]
\noindent Then,
\(
|\mathscr B|=\lambda^2.
\) Therefore $(X,\mathcal{A},\mathscr {B})$ satisfies all the conditions of the transversal design $TD(k,n)$.
   
% \end{enumerate}
%   The construction is given by illustration. 
% From table \ref{tab:cfg}, fix $a_0=2$ and $b_0=3$.\\
% Then, \[\mathcal{A}=\{\{3,0,6\}, \{5,2,8\}, \{7, 4,1\}\},\] 
% and, \begin{align*}\mathcal{C}&=\{\{2x+0\}, \{2x+1\},\ldots, \{2x+8\}\}\\
% & =\{\{0,2,7\}, \{6,8,1\}, \{3,5,4\}, \{1,3,8\}, \{7,0,2\}, \{4,6,5\}, \{2,4,0\}, \{8,1,3\}, \{5,7,6\}, \{3,5,1\},\\& ~~~~~~~~ \{0,2,4\}, \{6,8,7\}, \{4,6,2\}, \{1,3,5\}, \{7,0,8\}, \{5,7,3\},  \{2,4,6\}, \{8,1,0\}, \{6,8,4\}, \{3,5,7\}, \\& ~~~~~~~~\{0,2,1\}, \{7,0,5\}, \{4,6,8\}, \{1,3,2\}, \{8,1,6\}, \{5,7,0\}, \{2,4,3\}\}\end{align*}
% \noindent Then $(X,\mathcal{A},\mathcal{C})$ satisfies all the conditions of the transversal design $TD_5(3,3)$.
 \begin{illustration}
         Consider a $(27,3,9)$ modular balanced partition design $(X, \mathcal{A}, \mathcal{B})$. Suppose that $\mathcal{A}=\{A_1, A_2, A_3\}$ in which $A_1,A_2, A_3$ are defined below.
          \begin{align*}
             A_1& =\{0, 3,6,9,12,15,18, 21,24\},\\
             A_2 & =\{1,4,7,10,13,16,19,22, 25\},\\
             A_3 & =\{26,23,20, 17,14,11,8,5,2\}.
         \end{align*}
         Now, if we define the $\lambda$ maps, then blocks in each $\mathcal{B}_i$ are given below, 
\small{\begin{align*}
   \mathcal{B}_1& = \{ \{0,1,26\}, \{3,4,23\}, \{6,7,20\}, \{9,10,17\},\{12,13,14\},\{15,16,11\}, \{18,19,8\},\{21,22,5\}, \{24,25,2\}\}\\
     \mathcal{B}_2& = \{\{0,4,17\}, \{3,7,14\},\{6,10,11\}, \{9,13,8\}, \{12,16,5\}, \{15,19,2\}, \{18,22,26\}, \{21,25,23\}, \{24,1,20\}\}\\
     \mathcal{B}_3& =\{ \{0,7,26\}, \{3,10,23\}, \{6,13,20\}, \{9,16,17\}, \{12,19,14\}, \{15, 22, 11\}, \{18,25,8\}, \{21,1,5\}, \{24,4,2\}\}\\
     \mathcal{B}_4& = \{\{0,10,8\},\{3,13,5\}, \{6,16,2\}, \{9, 19, 26\}, \{12, 22, 23\}, \{15,25,20\}, \{18,1,17\}, \{21,4,14\}, \{24,7,11\}\}\\
     \mathcal{B}_5 &=\{\{0,13,17\}, \{3,16,14\}, \{6, 19,11\}, \{9, 22,8\}, \{12,25,5\}, \{15, 1,2\}, \{18,4,26\}, \{21,7,23\}, \{24,10,20\}\}\\
      \mathcal{B}_6 &=\{\{0,16,26\}, \{3,19,23\}, \{6, 22,20\}, \{9, 25,17\}, \{12,1,14\}, \{15,4,11\}, \{18,7,8\}, \{21,10,5\}, \{24,13,2\}\}\\
      \mathcal{B}_7 &=\{\{0,19,8\}, \{3,22,5\}, \{6, 25,2\}, \{9, 1,26\}, \{12,4,23\}, \{15,7,20\}, \{18,10,17\}, \{21,13,14\}, \{24,16,11\}\}\\
       \mathcal{B}_8 &=\{\{0,22,17\}, \{3,25,14\}, \{6, 1,11\}, \{9, 4,8\}, \{12,7,5\}, \{15,10,2\}, \{18,13,26\}, \{21,16,23\}, \{24,19,20\}\}\\
        \mathcal{B}_9 &=\{\{0,25,26\}, \{3,1,23\}, \{6, 4,20\}, \{9, 7,17\}, \{12,10,14\}, \{15,13,11\}, \{18,16,8\}, \{21,19,5\}, \{24,22,2\}\}
      \end{align*}}
\noindent If $\displaystyle \mathscr{B}=\bigcup_{i=1}^9 \mathcal{B}_i$, then $(X,\mathcal{A}, \mathscr{B})$ is a transversal design of size $9$, block group size $3$, and index $1$. 
 \end{illustration}
  
\section{Conclusion and scope}
In this article, we introduce the notion of {modular balanced partition design} and examine several of their fundamental properties together with the necessary conditions for their existence. Many of these conditions, however, remain unresolved and present open problems for further investigation. 
We prove the existence of $(mn, m, n)$-MBPDs over $\mathbb{Z}_{mn}$ for the case where both $m$ and $n$ are odd. We emphasize a construction of $(n^{2}, n, n)$-MBPDs over $\mathbb{Z}_{n^{2}}$ when $n \equiv 0 \pmod{4}$. In contrast, when $n \equiv 2 \pmod{4}$, the construction exhibits an inherent non-symmetry in most of the $n \times n$ subarrays. Consequently, for even $m$ and $n$ with $m \neq n$, the existence of $(mn, m, n)$-MBPDs over $\mathbb{Z}_{mn}$ remains an open question. Furthermore, the concepts of automorphisms and affine automorphisms within the framework of modular balanced partition designs are introduced and studied. Finally, a method to  construct a transversal design by using a modular balanced partition design, is obtained.
%%%%%%%%%%%%%%%%

\section*{Conflicts of Interest}
The authors declare that there is no conflict of interest.

\newpage

\begin{thebibliography}{99}

\bibitem{cameron1999permutation}
P. J. Cameron,
\emph{Permutation Groups},
London Mathematical Society Student Texts, Vol. 45,
Cambridge University Press, Cambridge, 1996.


\bibitem{zbMATH05233000}
P. J. Cameron,
\emph{Introduction to Algebra},
2nd ed.,
Oxford University Press, New York, 2008.

\bibitem{zbMATH00050655}
P. J. Cameron and J. H. van Lint,
\emph{Designs, Graphs, Codes and Their Links},
London Mathematical Society Student Texts, Vol. 22,
Cambridge University Press, Cambridge, 1991.


\bibitem{MR3720542}
S. Cichacz,
On zero sum-partition of Abelian groups into three sets and group distance magic labeling,
\emph{Ars Math. Contemp.} \textbf{13}(2) (2017), 417--425.


\bibitem{MR3761934}
S. Cichacz,
Zero sum partition of Abelian groups into sets of the same order and its applications,
\emph{Electron. J. Combin.} \textbf{25}(1) (2018), \#P1.20.


\bibitem{MR4149158}
S. Cichacz and T. F. Dinc,
A magic rectangle set on Abelian groups and its application,
\emph{Discrete Appl. Math.} \textbf{288} (2021), 198--210.


\bibitem{MR1370815}
C. J. Colbourn,
Transversal designs of block size eight and nine,
\emph{European J. Combin.} \textbf{17}(1) (1996), 1--14.


\bibitem{MR1435524}
C. J. Colbourn,
Some direct constructions for incomplete transversal designs,
\emph{J. Statist. Plann. Inference} \textbf{56}(1) (1996), 93--104.


\bibitem{MR2239315}
D. Combe and A. M. Nelson,
Magic labellings of infinite graphs over infinite groups,
\emph{Australas. J. Combin.} \textbf{35} (2006), 193--210.

\bibitem{MR2569784}
J. P. De Los Reyes, A. Das, C. K. Midha, and P. Vellaisamy,
On a method to construct magic rectangles of even order,
\emph{Util. Math.} \textbf{80} (2009), 277--284.


\bibitem{Erdos1961}
P. Erd\H{o}s, A. Ginzburg, and A. Ziv,
A theorem in additive number theory,
\emph{Bull. Res. Council Israel} \textbf{10F} (1961), 41--43.

\bibitem{MR4145425}
B. Freyberg,
On constant sum partitions and applications to distance magic-type graphs,
\emph{AKCE Int. J. Graphs Comb.} \textbf{17}(1) (2020), 373--379.


\bibitem{MR2313123}
W. Gao and A. Geroldinger,
Zero-sum problems in finite Abelian groups: A survey,
\emph{Expo. Math.} \textbf{24}(4) (2006), 337--369.


\bibitem{zbMATH05018184}
A. Geroldinger and F. Halter-Koch,
\emph{Non-Unique Factorizations: Algebraic, Combinatorial and Analytic Theory},
Pure and Applied Mathematics, Vol. 278,
Chapman \& Hall/CRC, Boca Raton, FL, 2006.

\bibitem{zbMATH06162097}
D. J. Grynkiewicz,
\emph{Structural Additive Theory},
Developments in Mathematics, Vol. 30,
Springer, Cham, 2013.


\bibitem{MR2510327}
G. Kaplan, A. Lev, and Y. Roditty,
On zero-sum partitions and anti-magic trees,
\emph{Discrete Math.} \textbf{309}(8) (2009), 2010--2014.


\bibitem{karthik2025existence}
S. Karthik, A. Venkatesan, and K. Paramasivam,
On the existence of a subgroup magic rectangle,
In: \emph{International Workshop on Combinatorial Algorithms (IWOCA 2025)},
Lecture Notes in Computer Science, Springer, (2025), pp. 332--346.


\bibitem{MR2469212}
C. C. Lindner and C. A. Rodger,
\emph{Design Theory},
Discrete Mathematics and Its Applications,
2nd ed., CRC Press, Boca Raton, FL, 2009.

\bibitem{MR4423367}
J. H. Silverman,
\emph{Abstract Algebra---An Integrated Approach},
Pure and Applied Undergraduate Texts, Vol. 55,
American Mathematical Society, Providence, RI, 2022.

\bibitem{MR2029249}
D. R. Stinson,
\emph{Combinatorial Designs: Constructions and Analysis},
Springer-Verlag, New York, 2004.


\bibitem{MR1068318}
S. Warner,
\emph{Modern Algebra},
2nd ed.,
Dover Books on Advanced Mathematics,
Dover Publications, New York, 1990.

\bibitem{MR3418508}
X. Zeng,
On zero-sum partitions of Abelian groups,
\emph{Integers} \textbf{15} (2015), \#A44.
\end{thebibliography}
\end{document}